\theoremstyle{plain}
\newtheorem{prop}{Proposition}[section]
\newtheorem{thm}[prop]{Theorem}
\newtheorem{lem}[prop]{Lemma}
\newtheorem{cor}[prop]{Corollary}
\numberwithin{equation}{section}
\theoremstyle{definition}
\newtheorem{dfn}[prop]{Definition}
\theoremstyle{remark}
\newtheorem*{rmk}{Remark}
\DeclareMathOperator{\kernel}{Ker}
\DeclareMathOperator{\cokernel}{Coker}
\DeclareMathOperator{\ran}{Ran}
\DeclareMathOperator{\re}{Re}
\DeclareMathOperator{\im}{Im}
\DeclareMathOperator{\supp}{supp}
\newcommand{\N}{\mathbb{N}}
\newcommand{\R}{\mathbb{R}}
\newcommand{\C}{\mathbb{C}}
\newcommand{\Li}{\mathcal{L}}
\newcommand{\De}{\mathscr{D}\, '}
\newcommand{\E}{\mathscr{E}}
\newcommand{\I}{\mathrm{Id}}
\newcommand{\ol}{\overline}
\begin{document}
\title[Microlocal properties of the range of systems of principal type]
{On the microlocal properties of the range of systems of principal type}

\author{Nils Dencker}
\address{Centre for Mathematical Sciences, University of Lund, Box 118,
SE-221 00 Lund, Sweden}
\email{dencker@maths.lth.se}

\author{Jens Wittsten}
\address{Graduate school of Human and Environmental Studies\\
Kyoto University\\
Yoshida Nihonmatsu-cho, Sakyo-ku\\
Kyoto 606-8501\\
Japan}
\thanks{Research of second author supported by the Japan Society for the Promotion of Science.}
\email{jens.wittsten@math.lu.se}
\subjclass[2010]{Primary 35S05; Secondary 35A02, 58J40, 47G30}

\begin{abstract}
The purpose of this paper is to study microlocal conditions
for inclusion relations between the ranges of square systems
of pseudodifferential operators which fail to be locally solvable.
The work is an extension of
earlier results for the scalar case in this direction, where analogues of
results by L. H{\"o}rmander about inclusion relations between the ranges of
first order differential operators with coefficients in $C^\infty$
which fail to be locally solvable were obtained. We shall study the properties of the 
range of systems of principal type with constant characteristics
for which condition $(\varPsi)$ is known to be equivalent to microlocal solvability.
\end{abstract}

\maketitle

\section{Introduction}

\noindent
In this paper we shall study the properties of the range of a
square system of classical
pseudodifferential operators $P\in \varPsi_{\mathrm{cl}}^m(X)$
on a $C^\infty$
manifold $X$ of dimension $n$,
acting on distributions $\De(X,\mathbb{C}^N)$ with values in 
$\C^N$; if $u\in\De(X,\C^N)$ then $u=(u_j)_{j=1,\ldots,N}$
where $u_j\in\De(X)$. If $P=(P_{jk})$ is an $N\times N$ system, then
$Pu\in\De(X,\C^N)$ is defined by
\begin{equation}\label{defsys}
(Pu)_j=\sum_{k=1}^N P_{jk}u_k,\quad 1\le j \le N.
\end{equation}
Here classical means that the symbol of $P$ is an asymptotic sum
$P_m+P_{m-1}+\ldots$ of matrix valued smooth functions
where $P_j(x,\xi)$ is homogeneous of degree $j$ in $\xi$,
and $P_m$ is the principal symbol.

We shall restrict our study
to systems of principal type, which means that the principal symbol
vanishes of first order on the kernel, see Definition \ref{def:systemprincip}.
We shall also assume that all (systems of) operators are properly supported, that is, both projections
from the support of the operator kernel in $X\times X$ to $X$ are proper maps.
For such $N\times N$ systems,
local solvability at a compact set $M\subset X$ means that for every $f$ in a
subspace of $C^\infty(X,\C^N)$ of finite codimension the equation
\begin{equation}\label{eqintrolocsolv}
Pu=f
\end{equation}
has a local weak solution $u\in\De(X,\C^N)$ in a neighborhood of $M$.
We can also define microlocal solvability at a set in the cosphere bundle,
or equivalently, at a conic set in $T^\ast(X)\smallsetminus 0$, the cotangent bundle of $X$ with the
zero section removed.
By a conic set $K\subset T^\ast(X)\smallsetminus 0$ we mean a set that is
conic in the fiber, that is,
\[
(x,\xi)\in K \quad \Longrightarrow \quad (x,\lambda \xi)\in K \quad \text{for all }\lambda>0.
\]
If, in addition, $\pi_x(K)$ is compact in $X$, where $\pi_x:T^\ast(X)\to X$ is the projection,
then $K$ is said to be compactly based.
Thus, we say that $P$ is solvable at the compactly based cone $K \subset T^{\ast}(X) \smallsetminus 0$
if there is an integer $N_0$ such that for every
$f \in H_{(N_0)}^{\mathrm{loc}}(X,\C^N)$
there exists a $u \in \De(X,\C^N)$ with $K \cap W\! F(Pu-f)=\emptyset$ (see Definition \ref{defrangesystems}).

The famous example due to Hans Lewy~\cite{le}
showed that not all smooth linear differential operators are solvable.
This example led to an extension due to H{\"o}rmander~\cite{ho10,ho11}
in the sense of a necessary condition for a differential equation $P(x,D)u=f$ to have a
solution locally for every $f\in C^\infty$.
In fact (see~\cite[Theorem~6.1.1]{ho0}),
if $\varOmega$ is an open set in $\mathbb{R}^n$, and $P$ is a differential
operator of order $m$ with coefficients in $C^\infty(\varOmega)$ such that the differential equation
$P(x,D)u=f$
has a solution $u\in \De(\varOmega)$ for every $f\in C_0^\infty(\varOmega)$, then
$\{p, \overline{p} \}$ must vanish at every point $(x,\xi)\in \varOmega \times \mathbb{R}^n$ for which
$p(x,\xi)=0$,
where $p$ is the principal symbol of $P$ and
\[
\{ a , b \}=\sum_{j=1}^n \partial_{\xi_j}a \, \partial_{x_j}b- \partial_{x_j} a
\, \partial_{\xi_j} b
\]
denotes the Poisson bracket.

Recall that a scalar
pseudodifferential operator $P$ is of principal type
if the Hamilton vector field $H_p$ of the principal symbol $p$ is not proportional
to the radial vector field $\rho$ when $p=0$, where $H_p :f\mapsto\{p,f\}$ for $f\in C^\infty$
and $\rho$ is given in terms of local coordinates on $T^{\ast}(X) \smallsetminus 0$
by $\xi\partial_\xi$.
For such operators
it was conjectured by Nirenberg and Treves~\cite{nitr} that
local solvability at a compact set $M\subset X$
in the sense of \eqref{eqintrolocsolv}
is equivalent to
condition $(\varPsi)$ on the principal symbol, which means that
there is a neighborhood $Y$ of $M$ such that
\begin{multline}\label{intropsi}
\im ap \text{ does not change sign from $-$ to $+$} \\
\text{along the oriented bicharacteristics of } \re ap
\end{multline}
over $Y$ for any $0\neq a\in C^\infty(T^*(Y)\smallsetminus 0)$.
The oriented bicharacteristics of $\re ap$
are the positive flow-outs of the Hamilton vector field
$H_{\re ap}$ on $\re ap=0$, sometimes referred to as semi-bicharacteristics of $p$.
Note that condition \eqref{intropsi} is invariant
under multiplication of $p$ with non-vanishing factors
and symplectic changes of coordinates. Hence the condition is invariant under
conjugation of $P$ with elliptic
Fourier integral operators.

The necessity of condition $(\varPsi)$ for local solvability of scalar
pseudodifferential operators of principal type was proved by Moyer~\cite{mo} in $1978$
for the two dimensional case and by H{\"o}rmander~\cite{ho40} in $1981$ for the general case.
It was finally shown by
the first author~\cite{de}
in $2006$ that condition $(\varPsi)$ is also sufficient for local
and microlocal solvability for scalar operators of principal type.

For systems, no corresponding conjecture for solvability exists. However, by considering
the case when the principal symbol of a square system $P$ of principal type has
\emph{constant characteristics} (see Definition \ref{def:constantchars}),
the first author~\cite{de2}
showed that local and microlocal solvability is equivalent to condition $(\varPsi)$
on the eigenvalues of the principal symbol. Here we wish to mention that
although not explicitly addressed in~\cite{de2}, one actually finds that
for systems of principal type with constant characteristics,
condition $(\varPsi)$ on the eigenvalues of the principal symbol is necessary
also for semi-global solvability in the sense of~\cite[Theorem~26.4.7]{ho4}.
For easy reference we have included a statement of this result,
see Theorem \ref{thm:solv} below and also the reformulation of the result given in
Corollary \ref{thm:solv1}.

To address a conjecture made by Lewy stipulating that scalar differential operators which fail
to have local solutions are essentially uniquely determined by the range,
H{\"o}rmander~\cite[Chapter~6.2]{ho0} proved that
if $P$ and $Q$ are two first order differential operators with coefficients in $C^\infty(\varOmega)$
and in
$C^1(\varOmega)$, respectively, such that
the equation $P(x,D)u=Q(x,D)f$ has a solution $u\in \De(\varOmega)$
for every $f\in C_0^\infty(\varOmega)$, and $x$ is a point in $\varOmega$ such that
\begin{equation}\label{eq:introeq0.9}
p(x,\xi)=0 , \quad \{ p , \overline{p} \}(x,\xi)\neq 0
\end{equation}
for some $\xi\in \mathbb{R}^n$, then there is a constant $\mu$ such that (at the fixed point $x$)
\[
Q(x,D)=P(x,D) \mu .
\]
This result was generalized to scalar classical pseudodifferential operators
of principal type by the second author, see~\cite[Theorem~2.19]{jw}. It was shown that if
the principal symbol $p$ of $P\in\varPsi_\mathrm{cl}^m(X)$
fails to satisfy condition $(\varPsi)$ along a curve $\gamma$ in
place of the condition given by \eqref{eq:introeq0.9}, and if
the range of $Q\in\varPsi_\mathrm{cl}^k(X)$ is microlocally contained in the range of $P$ at
a cone $K$ containing $\gamma$,
then one can find an operator $E\in\varPsi_\mathrm{cl}^{k-m}(X)$ such that all the terms in the
asymptotic expansion of the symbol of $Q-PE$
vanish of infinite order at every point belonging to
a \emph{minimal bicharacteristic} $\varGamma\subset\gamma$ of $p$.
For the definition of $\varGamma$,
see Section \ref{sec:minimalbicharacteristics} and Definition \ref{dfn:minimal1}
in particular.
It was also shown that one recovers the mentioned result for first order differential
operators, if $Q$ is assumed to have $C^\infty$ coefficients.
The main result of this paper is a generalization of~\cite[Theorem~2.19]{jw}
to systems of principal type with constant
characteristics, see Theorem \ref{thm:bigsystems}.
We shall only consider operators acting on
distributions $\De (X,\C^N)$
with values in $\C^N$ but since the
results are essentially local (see the trivialization given by
Proposition \ref{prop:basis}) and invariant under base changes, they immediately
carry over to operators on sections of vector bundles.

This article was written during a period when the second author
stayed at Kyoto University, supported by the JSPS postdoctoral
fellowship program. The second author thanks
JSPS for its financial support, and wishes to express
his gratitude to Professor Yoshinori Morimoto at
Kyoto University for his hospitality.

\section{Systems of principal type and constant characteristics}
\label{sec:preliminariesforsystems}

\noindent
Let $X$ be a $C^\infty$
manifold of dimension $n$. In what follows, $C$ will be taken to be a new constant
every time unless stated otherwise. We let $\kernel A$ denote the kernel and
$\ran A$ the range of the matrix $A$, and
let $\Li_N=\Li(\C^N,\C^N)$ be the space
of bounded linear maps from $\C^N$ to $\C^N$.

In this section we will introduce the systems that will be the focus of our study.
For a more thorough discussion as well as multiple examples, we refer to~\cite{de2}.
We begin by recalling the definition of a square system of principal type.

\begin{dfn}\label{def:systemprincip}
We say that the $N\times N$ system $w\mapsto P(w)\in C^1(T^\ast(X)\smallsetminus 0)$ is of principal type at $w_0$ if
\begin{equation}\label{invertibletangentvector}
\partial_\nu P(w_0): \kernel P(w_0)\longrightarrow \cokernel P(w_0)=\C^N/\ran P(w_0)
\end{equation}
is bijective for some $\partial_\nu\in T_{w_0}(T^\ast(X)\smallsetminus 0)$,
where $\partial_\nu P(w_0)=\langle \nu,dP(w_0)\rangle$ and the mapping
is given by $u\mapsto \partial_\nu P(w_0)u$ mod $\ran P(w_0)$. We say that
$P\in \varPsi_{\mathrm{cl}}^m(X)$ is of principal type at $w_0$ if the principal symbol
$P_m(w)$ is of principal type at $w_0$.
\end{dfn}

Due to the relation between the dimensions of the kernel and the cokernel
only square systems can be of principal type.
Moreover, $P(w)\in C^1$ is of principal type if and only if the adjoint $P^\ast$ is
of principal type, and if $A(w),B(w)\in C^1$ are invertible and
$P(w)\in C^1$ is of principal type then $APB$ is of principal type, see~\cite[Remark~2.2]{de2}.

Recall that if
\begin{equation}\label{def:radialmultiplier}
M:\R^+\times T^\ast(X)\smallsetminus 0\to T^\ast(X)\smallsetminus 0
\end{equation}
is the $C^\infty$ map acting through multiplication by $t$ in the fiber,
then the radial vector
field $\rho\in T(T^\ast(X)\smallsetminus 0)$ is invariantly described by
\[
\rho f = \frac{d}{dt} M_t^\ast f|_{t=1}, \quad f\in C^1(T^\ast(X)\smallsetminus 0).
\]
Here $M_t(w)=M(t,w)$ and in terms of local coordinates
we have $M_t(w)=(x,t\xi)$ and $\rho(w)=\xi\partial_\xi$ at $w$ if $w=(x,\xi)$, see the discussion
following~\cite[Definition~21.1.8]{ho3}.
Suppose now that $P$ is an $N\times N$ system of principal type at $w_0$ such that
Definition \ref{def:systemprincip} is satisfied for some
$\partial_\nu\in T_{w_0}(T^\ast(X)\smallsetminus 0)$.
If $P$ is homogeneous of
degree $m$, that is, $M_t^\ast P(w)=t^m P(w)$, then $\partial_\nu$ cannot be
proportional to $\rho(w_0)$. Indeed, differentiation gives $\rho P=mP$ in view
of Euler's homogeneity relation, so $u\mapsto\rho P(w_0)u=0$ for all $u\in\kernel P(w_0)$.
Hence $\rho P(w_0):\kernel P(w_0)\to\cokernel P(w_0)$ cannot be invertible
unless $\kernel P(w_0)$ is trivial.

\begin{rmk}\label{specialPT}
For a scalar operator $P$, Definition \ref{def:systemprincip} coincides with the
notion that the principal symbol $p$ of $P$ vanishes of first order on the kernel, that is,
the differential $dp$ of the principal symbol is non-vanishing at the points where $p=0$.
However, in the homogeneous case
one often defines principal type operators so that the Hamilton
vector field $H_p$ of $p$ is not
proportional to
the radial vector field $\rho$.
This is also the definition we shall use for
scalar operators of principal type. Although not apparent from
Definition \ref{def:systemprincip}, this would not be
an inconvenience due to the properties of minimal
bicharacteristics, near which we will do our analysis.
However, we would like to point out that
if $\omega$ is the canonical one form
then we recover the scalar definition of principal type from
Definition \ref{def:systemprincip} applied to scalar symbols
under the additional
condition that the tangent vector $\partial_\nu$ for which the map \eqref{invertibletangentvector}
is invertible also satisfies $\langle \partial_\nu, \omega(w_0)\rangle=0$.
In fact, since $H_p$ is proportional to $\rho$
if and only if $dp$ is proportional to $\omega$, the claim follows.
Note that if $\sigma$ is the symplectic form then
$\langle \rho, \omega\rangle=\sigma(\rho,\rho)=0$, so
this does not exclude multiples of $\rho$
for which we know that Definition \ref{def:systemprincip} does not hold
in the homogeneous case in view of the discussion preceding the remark.
\end{rmk}

The eigenvalues of the principal symbol $P_m$ of an
$N\times N$ system of
classical pseudodifferential operators $P\in \varPsi_{\mathrm{cl}}^m(X)$
are the solutions to the characteristic equation
\begin{equation}\label{eq:syscharacteristicequation}
|P_m(w)-\lambda \mathrm{Id}_N|=0,
\end{equation}
where $|A|$ denotes the determinant of the matrix $A$.
Recall that the algebraic multiplicity of the eigenvalue
$\lambda$ of $P_m(w)$ is the multiplicity of $\lambda$
as a root to equation \eqref{eq:syscharacteristicequation},
while the geometric multiplicity is the dimension of $\kernel (P_m(w)-\lambda\mathrm{Id}_N)$.
If the matrix $P_m(w)$ depends continuously on a parameter $w$,
then the eigenvalues $\lambda(w)$ also depend continuously on $w$.
Following the terminology in~\cite{de2}, such a continuous function
$w\mapsto\lambda(w)$ of eigenvalues will be
referred to as a section of eigenvalues of $P_m(w)$.
We shall usually only write $\lambda(w)$ to signify this property.

One problem with studying systems $P(w)$ is that the eigenvalues are
not very regular in the parameter $w$, generally they depend only
continuously (and eigenvectors measurably) on~$w$, see for
example~\cite[Example~2.16]{de2}.
We will avoid this problem by studying systems with constant characteristics.
Before defining this property we need to introduce some notation.

For an $N \times N$ system $P \in C^\infty ( T^\ast(X)\smallsetminus 0)$
and all integers $k\ge 1$
we define 
\begin{align*}
{\omega}_k(P) & = \{(w,{\lambda}) \in T^\ast(X)\times \C:
\dim\kernel (P(w)-{\lambda}\I_N) \ge k\}, \\ 
{\varOmega}_k(P) & = \{(w,{\lambda}) \in T^\ast(X)\times \C:
\partial_{\lambda}^j |P(w) - {\lambda}\I_N| = 0 \text{ for all } j < k
\}.
\end{align*}
Note that $\omega_k(P)=\varOmega_k(P)=\emptyset$ for all $k>N$ when
$P$ is an $N\times N$ system.
We have $\omega_1(P)=\varOmega_1(P)$ but $\omega_k(P)$ and $\varOmega_k(P)$
could be different when $k>1$ if $P$ is not symmetric.
Clearly, ${\omega}_k(P)$ and ${\varOmega}_k(P)$ are closed sets for any $k \ge 1$,
and
\[
\omega_{k+1}(P)\subset\omega_k(P)\subset\varOmega_k(P)\subset\varOmega_{k-1}(P)
\subset\varOmega_1(P),\quad k>1.
\]
Therefore, we can define
\begin{equation}\label{varXi}
\varUpsilon(P) = \bigcup_{k > 1} \partial {\varOmega}_k(P),
\qquad
\varXi(P) = \bigcup_{k > 1} \partial {\omega}_k(P)\cup\partial {\varOmega}_k(P),
\end{equation}
where $\partial {\omega}_k(P)$ and $\partial {\varOmega}_k(P)$ are the
boundaries in the relative topology of ${\varOmega}_1(P)$.
By the definition we find that the multiplicity of the 
zeros of $|P(w)-{\lambda}\I_N|$ is locally constant on ${\varOmega}_1(P)
\smallsetminus \varUpsilon(P)$ and the dimension $\dim \kernel (P(w) - {\lambda}\I_N)$ is constant on
${\varOmega}_1(P) \smallsetminus (\varXi(P)\smallsetminus\varUpsilon(P))$.
Thus, we find that both the algebraic and the geometric multiplicities
of the eigenvalues of the system $P(w)$ are locally constant on ${\varOmega}_1(P)\smallsetminus
{\varXi}(P)$. Note also
that $\varXi(P)  $  and $ \varUpsilon(P)$ are closed and
nowhere dense in ${\varOmega}_1(P)$ since they are unions of 
boundaries of closed sets. 
Moreover,
\[
(w,{\lambda}) \in {\varXi}(P) \iff (w,\ol {\lambda}) \in {\varXi}(P^\ast)
\]
since $P^\ast - \ol {\lambda}\I_N = (P - {\lambda}\I_N)^\ast$.

\begin{dfn}\label{def:constantchars}
We say that the $N \times N$ system $P(w)$ has constant
characteristics near the set $K$ if 
\[
K \times \{0\} \cap {\varXi}(P) = \emptyset.
\]
If $K$ is a compact set, this means that one can find a neighborhood
$U$ of $K$ and an ${\varepsilon} > 0$ so that $U\times
D_{\varepsilon}(0) \cap {\varXi}(P) = \emptyset$, where
$D_{\varepsilon}(0)$ is the disc at $0$ with radius ${\varepsilon}$.
\end{dfn}

This is a local definition: if the system has constant
characteristics near all points in $K$, then it has constant
characteristics near $K$. 
Note also that if $K$ is compact and $K \times \{0\} \cap \varUpsilon(P) = \emptyset$,
then one can find $U$ and $\varepsilon$ as in Definition \ref{def:constantchars}
such that $U\times D_{\varepsilon}(0) \cap \varUpsilon(P) = \emptyset$.
If $\lambda(w)$ is a section of eigenvalues
of $P(w)$ such that $|\lambda(w)|<\varepsilon$ in $U$
then it is a uniquely defined
$C^\infty$ function there in view of~\cite[Remark~2.4]{de2}.
In particular, if $K$ belongs to the characteristic set
$\varSigma(P)=\{w:|P(w)|=0\}$ of $P(w)$, and $\lambda(w)$
is the section of eigenvalues of $P(w)$ vanishing on $K$,
then after possibly shrinking $U$ we find that $\lambda(w)$
has constant algebraic multiplicity in $U$, so $\lambda(w)\in C^\infty(U)$
is uniquely defined.

When the principal symbol $P_m(w)$ of an
$N\times N$ system of
classical pseudodifferential operators $P\in \varPsi_{\mathrm{cl}}^m(X)$
is homogeneous of degree $m$, then the sections of eigenvalues of $P_m(w)$ are
also homogeneous of degree $m$.

\begin{prop}\label{prop:homogeneouseigenvalues}
Let $X$ be a $C^\infty$ manifold and let $P\in C^\infty(T^\ast(X)\smallsetminus 0)$
be an $N\times N$ system, homogeneous of degree $m$, that is, $M_t^\ast P=t^mP$
where $M$ is the $C^\infty$ map given by \eqref{def:radialmultiplier}
acting through multiplication by $t$ in the fiber.
Then the solutions to the characteristic equation
$|P(w)-\lambda\I_N|=0$
are continuous and homogeneous of degree $m$. Furthermore, the number of
distinct solutions to
\begin{equation}\label{app:chareqt}
|P(M_t(w))-\lambda\I_N|=0
\end{equation}
is a constant function of $t$.
\end{prop}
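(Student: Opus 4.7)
The plan is to exploit homogeneity at the level of the characteristic polynomial and reduce everything to a single algebraic identity. The starting point is the defining relation $M_t^\ast P(w)=t^m P(w)$, which in local coordinates reads $P(x,t\xi)=t^m P(x,\xi)$. Substituting in the characteristic polynomial,
\[
|P(M_t(w))-\lambda\I_N|=|t^m P(w)-\lambda\I_N|=t^{mN}\,|P(w)-t^{-m}\lambda\I_N|,
\]
which is valid for every $t>0$ and every $\lambda\in\C$. From this one concludes immediately that $\lambda$ is a solution of \eqref{app:chareqt} at $M_t(w)$ if and only if $t^{-m}\lambda$ is an eigenvalue of $P(w)$, that is, the eigenvalues at $M_t(w)$ are exactly $t^m$ times those at $w$.

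To upgrade this pointwise observation to the claim about sections of eigenvalues, I would argue as follows. Let $\lambda(w)$ be a section of eigenvalues of $P$ in the sense recalled in the paragraph preceding the proposition. Since $t\mapsto t^m\lambda(w)$ is for each fixed $w$ an eigenvalue of $P(M_t(w))$, and sections of eigenvalues are continuous, the identity $\lambda(M_t(w))=t^m\lambda(w)$ holds at least in a neighborhood of any given $(t_0,w_0)$ by local uniqueness of a continuous selection starting from a given eigenvalue; connectedness of $\R^+\times(T^\ast(X)\smallsetminus 0)$ in the relevant directions then propagates this to the full domain. Continuity itself is a standard consequence of the fact that the roots of a monic polynomial depend continuously on its coefficients (in the Hausdorff sense on the unordered set of roots), which is inherited through continuity of $w\mapsto P(w)$.

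For the second assertion, fix $w\in T^\ast(X)\smallsetminus 0$. The map $\lambda\mapsto t^m\lambda$ is a bijection of $\C$ for every $t>0$, so it sends the finite set of distinct roots of $|P(w)-\lambda\I_N|=0$ bijectively onto the set of distinct roots of \eqref{app:chareqt}. Hence the cardinality of the latter set equals that of the former and is therefore independent of $t\in\R^+$.

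No real obstacle is foreseen; the only point that deserves some care is the passage from the pointwise relation between eigenvalue sets to the homogeneity of a continuous section, which is handled by the local selection argument sketched above rather than by any deeper fact. Everything else is a direct consequence of the identity displayed in the first paragraph together with the definition of homogeneity.
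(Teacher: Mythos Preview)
Your approach is correct and essentially the same as the paper's. Both arguments hinge on the identity $|P(M_t(w))-\lambda\I_N|=t^{mN}|P(w)-t^{-m}\lambda\I_N|$, and both deduce from it that for fixed $w$ the eigenvalues of $P(M_t(w))$ are exactly $t^m$ times those of $P(w)$, so that a continuous section $t\mapsto\lambda(M_t(w))$ must stay on a single branch $t^m\mu_j$.

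The only difference is in how this last step is made precise. The paper writes the distinct eigenvalues of $P$ at a fixed ray as $(t|\xi|)^m\varrho_1,\ldots,(t|\xi|)^m\varrho_\ell$ with $\varrho_j$ the distinct eigenvalues on the sphere, and observes explicitly that $t\mapsto(t|\xi|)^{-m}\lambda(M_t(w))$ is a continuous map into the finite discrete set $\{\varrho_1,\ldots,\varrho_\ell\}$, hence constant on the connected set $\R^+$. Your phrase ``local uniqueness of a continuous selection starting from a given eigenvalue'' encodes the same reasoning but leaves implicit the point that the branches $t^m\mu_j$ never meet for distinct $j$ (which is exactly what makes the selection unique; in general, continuous eigenvalue selections are \emph{not} locally unique at crossings). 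Since this non-crossing follows directly from your displayed identity, the argument goes through, but making it explicit would tighten the write-up. Your treatment of the second assertion via the bijection $\lambda\mapsto t^m\lambda$ is if anything cleaner than the paper's symmetry argument.
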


\begin{proof}
Let $w\mapsto\lambda(w)$ be a solution to
$|P(w)-\lambda\I_N|=0$. Since
$P\in C^\infty$ it follows that $\lambda(w)$ is continuous so we
only have to prove homogeneity. To this end, introduce a Riemannian
metric on $X$ (which by duality allows us to define the unit cotangent bundle), and write
$P(x,\xi)=|\xi|^mp(x,\xi)$ where $p(x,\xi)=P(x,\xi/|\xi|)$
is smooth and homogeneous of degree $0$.
Such functions can be identified with smooth functions on
$S^\ast(X)$, so if $\pi:T^\ast(X)\smallsetminus 0\to S^\ast(X)$
is the projection then we have
$p=\pi^\ast p_s$
for some matrix valued function $p_s\in C^\infty(S^\ast(X),\mathcal L_N)$.
Here $p_s$ depends implicitly on the choice of metric,
but this is of no importance.
For a fixed point
$w$, suppose that $\varrho_1,\ldots,\varrho_\ell$ are the distinct
solutions to $|p_s(\pi(w))-\varrho\I_N|=0$. By the homogeneity of $P$ it follows that
if $w=(x,\xi)$ in local coordinates, then for any $t>0$ we have
\[
0=|M_t^\ast P(w)-M_t^\ast\lambda(w)\I_N|=(t|\xi|)^{mN}|p_s(\pi(w))-(t|\xi|)^{-m}M_t^\ast\lambda(w)\I_N|,
\]
so there exists an integer $k(t)\in\{1,\ldots,\ell\}$ such that
\begin{equation}\label{eq:repformulaforeigenvalues}
M_t^\ast\lambda(w)=(t|\xi|)^{m}\varrho_{k(t)}.
\end{equation}
Since $\lambda(w)$ is continuous
and the eigenvalues $\varrho_k$ are distinct, equation \eqref{eq:repformulaforeigenvalues}
implies that the integer valued map $t\mapsto k(t)$ is locally constant.
Since $\R^+$ is connected, it follows that $k(t)\equiv k$ for some $1\le k\le\ell$.
In particular, $k(t)=k(1)$ for all $t>0$, which yields
\begin{equation*}
M_t^\ast\lambda(w)=(t|\xi|)^{m}\varrho_{k(t)}=t^m(|\xi|^{m}\varrho_{k(1)})=t^mM_1^\ast\lambda(w)
=t^m\lambda(w),
\end{equation*}
so $\lambda(w)$ is homogeneous of degree $m$.

To prove the last statement of the proposition, let $\ell(t)$ be the
number of distinct solutions to \eqref{app:chareqt}. By
the first part of the proof these solutions are homogeneous,
which implies that there are at least $\ell(t)$ distinct solutions
at the point $M_{t'}^\ast(w)$. Thus $\ell(t)\le\ell(t')$.
By symmetry we also have $\ell(t')\le\ell(t)$, which completes the proof.
\end{proof}

\begin{cor}\label{cor:algmult}
Let $P\in C^\infty(T^\ast(X)\smallsetminus 0)$
be an $N\times N$ system, homogeneous of degree $m$, and
let $K\subset T^\ast(X)\smallsetminus 0$ be a compact set. Suppose
that $w\mapsto\lambda(w)$ is a section of eigenvalues of $P$ with
constant algebraic multiplicity for $w$ in
\[
K_\varepsilon=\{w\in T^\ast(X)\smallsetminus 0:\inf_{w_0\in K}|w-w_0|<\varepsilon\},
\]
with distance given in terms of some fixed Riemannian metric.
Then $\lambda(w)$ has constant algebraic multiplicity in the cone
\[
\varGamma=\{M_t(w):t>0, w\in K_\varepsilon\}.
\]
\end{cor}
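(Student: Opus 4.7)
The plan is to reduce the claim to a constancy statement on the cosphere bundle via the factorization used in the proof of Proposition \ref{prop:homogeneouseigenvalues}. Fix a Riemannian metric on $X$ as in that proof and write $P(x,\xi) = |\xi|^m (\pi^\ast p_s)(x,\xi)$, where $\pi : T^\ast(X)\smallsetminus 0 \to S^\ast(X)$ is the radial projection and $p_s \in C^\infty(S^\ast(X), \mathcal L_N)$. For any $w=(x,\xi)$ and $\mu\in\C$ one has the factorization
\[
|P(w) - \mu\,\I_N| = |\xi|^{mN}\bigl|p_s(\pi(w)) - |\xi|^{-m}\mu\,\I_N\bigr|,
\]
so the algebraic multiplicity of $\mu$ as a root of the characteristic polynomial of $P(w)$ equals the algebraic multiplicity of $|\xi|^{-m}\mu$ as a root of the characteristic polynomial of the matrix $p_s(\pi(w))$.

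Next I would extend $\lambda$ from $K_\varepsilon$ to $\Gamma$ by the formula $\lambda(M_t(w)) = t^m\lambda(w)$ for $w\in K_\varepsilon$ and $t > 0$. Since Proposition \ref{prop:homogeneouseigenvalues} shows that any section of eigenvalues of $P$ is homogeneous of degree $m$, this prescription is consistent along any ray that meets $K_\varepsilon$ in more than one point, and it yields a continuous section of eigenvalues on all of $\Gamma$.

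To conclude, let $w'\in\Gamma$ and pick $w=(x,\xi)\in K_\varepsilon$ and $t>0$ with $w'=M_t(w)$. Then $\pi(w')=\pi(w)$ while $|t\xi|^{-m}\lambda(w') = t^{-m}|\xi|^{-m}\cdot t^m\lambda(w) = |\xi|^{-m}\lambda(w)$. Applying the equivalence from the first step at both $w$ and $w'$, one sees that the algebraic multiplicity of $\lambda(w')$ at $w'$ equals the algebraic multiplicity of $\lambda(w)$ at $w$. Since the latter is constant in $w\in K_\varepsilon$ by hypothesis, $\lambda$ has that same constant algebraic multiplicity throughout $\Gamma$.

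I do not anticipate a serious obstacle: the argument is essentially bookkeeping built on the scaling computation already carried out in the proof of Proposition \ref{prop:homogeneouseigenvalues}. The only mildly delicate point is the well-definedness of the homogeneous extension of $\lambda$, which is ensured by the homogeneity statement of that proposition.
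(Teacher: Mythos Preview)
Your proof is correct and follows essentially the same homogeneity/scaling argument as the paper: both compute how the characteristic polynomial of $P(w)$ transforms under $w\mapsto M_t(w)$ and read off that the multiplicity of the (homogeneously extended) eigenvalue is unchanged along rays. The only cosmetic difference is that you route the computation through the cosphere reduction $p_s$, whereas the paper works directly with the local factorization $|P(w)-\lambda\I_N|=(\lambda-\lambda(w))^k e(w,\lambda)$ and checks $e(w,t^{-m}\lambda)=e(w,\lambda(w))\ne0$.
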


\begin{proof}
Let $M_t(w)\in\varGamma$
and suppose that the algebraic multiplicity of $\lambda(w)$ equals $k$
for $w\in K_\varepsilon$.
By assumption we then have $|P(w)-\lambda\I_N|=(\lambda-\lambda(w))^k e(w,\lambda)$,
where $e(w,\lambda(w))\ne 0$. Consider now equation
\eqref{app:chareqt}. By the homogeneity of $P$ this equation is equivalent to
$|P(w)-t^{-m}\lambda\I_N|=0$ for any $t>0$. Since the left-hand side equals
$(t^{-m}\lambda-\lambda(w))^k e(w,t^{-m}\lambda)$ and
$\lambda(w)=t^{-m}M_t^\ast \lambda(w)$ by Proposition \ref{prop:homogeneouseigenvalues},
this shows that $\lambda=M_t^\ast\lambda(w)$ is a solution to \eqref{app:chareqt}
of at least multiplicity $k$.
Using homogeneity again we find that for $\lambda=M_t^\ast\lambda(w)$
we have $e(w,t^{-m}\lambda)=e(w,\lambda(w))\ne 0$, which shows that the multiplicity
is precisely $k$. Since $M_t(w)\in\varGamma$ was arbitrary, the proof is complete.
\end{proof}

In view of Corollary \ref{cor:algmult} we shall sometimes permit
us to say that a system $P$ has constant characteristics near a conic
set $K\subset T^\ast(X)\smallsetminus 0$ if it is clear from the context
what we mean.
Suppose now that $P(w)$ is homogeneous of degree $m$ and of principal type
with constant characteristics near a compact set $K\subset T^\ast(X)\smallsetminus 0$
contained in the characteristic set $\varSigma(P)=\{w: |P(w)|=0\}$ of $P(w)$.
Let $\lambda(w)$ be the unique section of eigenvalues of $P(w)$ near $K$
satisfying $\lambda(w)=0$ for $w\in K$.
By Definition \ref{def:constantchars} together with~\cite[Proposition~2.10]{de2}
we then have $d\lambda(w)\ne 0$ in $K$,
which in view of Proposition \ref{prop:homogeneouseigenvalues}
implies that $d\lambda(w)\ne 0$ in a conic neighborhood of $K$.
In particular, this means that for systems of principal type with constant characteristics,
the section of eigenvalues close to the origin is a uniquely defined $C^\infty$
function with non-vanishing differential,
so the semi-bicharacteristics of the eigenvalues are
well defined near the characteristic set $\varSigma(P)$. This makes
the following definition possible.

\begin{dfn}\label{def:conditionpsiforsystems}
We say that the $N\times N$ system $P\in\varPsi_{\mathrm{cl}}^m(X)$ of principal type
and constant characteristics satisfies condition $(\varPsi)$ if the eigenvalues
of the principal symbol satisfies condition $(\varPsi)$.
\end{dfn}

Similarly, by the previous discussion it follows that
the condition that the Hamilton vector field of an eigenvalue $\lambda$
does not have the radial direction when $\lambda=0$ is also well defined.
Under this additional assumption, the section of eigenvalues close to the origin is then a uniquely defined
homogeneous $C^\infty$
function of principal type.
In fact, if Definition \ref{def:systemprincip} is changed to include
the additional condition discussed in the remark following the definition,
then the characterization of systems of principal type given by~\cite[Proposition~2.10]{de2}
takes the following form. This is included only for the sake of completeness and will not
be used here.

\begin{prop}\label{characterization}
Let $P(w)\in C^\infty(T^\ast(X)\smallsetminus 0)$ be an $N\times N$ system such that $|P(w_0)|=0$,
and let $\varUpsilon(P)$ be given by \eqref{varXi}. Assume that
\[
\{w_0\}\times\{0\}\cap\varUpsilon(P)=\emptyset.
\]
Let $\lambda(w)\in C^\infty$ be the unique section
of eigenvalues of $P(w)$ satisfying $\lambda(w_0)=0$. If $\omega$ is the canonical
one form then $P(w)$ satisfies
Definition \ref{def:systemprincip} for some tangent vector
$\partial_\nu\in T_{w_0}(T^\ast(X)\smallsetminus 0)$
such that $\langle\partial_\nu,\omega(w_0)\rangle=0$
if and only if
the Hamilton vector field
$H_\lambda(w_0)$ is not proportional to the radial vector field at $w_0$
and the geometric multiplicity of the
eigenvalue $\lambda$ is equal to the algebraic multiplicity at $w_0$.
\end{prop}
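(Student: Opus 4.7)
The plan is to use the hypothesis $\{w_0\}\times\{0\}\cap\varUpsilon(P)=\emptyset$ to reduce the statement to~\cite[Proposition~2.10]{de2}, whose characterization of principal type imposes no constraint on the witness $\partial_\nu$ and in the present setting reads: $P(w)$ satisfies Definition~\ref{def:systemprincip} at $w_0$ for a given $\partial_\nu$ if and only if $\partial_\nu\lambda(w_0)\neq 0$ and the geometric and algebraic multiplicities of $\lambda$ at $w_0$ coincide. Taking this as a black box, the only additional ingredient is the symplectic duality already recorded in Remark~\ref{specialPT}: since $\iota_\rho\sigma=\omega$ (a one-line local-coordinate computation using $\rho=\xi\partial_\xi$ and $\sigma=\sum d\xi_j\wedge dx_j$) and $\iota_{H_\lambda}\sigma=d\lambda$, nondegeneracy of $\sigma$ gives that $H_\lambda(w_0)$ is proportional to $\rho(w_0)$ if and only if $d\lambda(w_0)$ is proportional to $\omega(w_0)$.

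For the forward direction, I would assume \eqref{invertibletangentvector} is bijective for some $\partial_\nu$ with $\langle\partial_\nu,\omega(w_0)\rangle=0$. By~\cite[Proposition~2.10]{de2} this immediately yields geometric $=$ algebraic multiplicity at $w_0$ together with $\partial_\nu\lambda(w_0)=\langle\partial_\nu,d\lambda(w_0)\rangle\neq 0$. Thus $\partial_\nu$ annihilates $\omega(w_0)$ but not $d\lambda(w_0)$, so $d\lambda(w_0)$ cannot be a scalar multiple of $\omega(w_0)$, which by the duality above is equivalent to $H_\lambda(w_0)$ not being proportional to $\rho(w_0)$.

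For the converse, I would assume geom $=$ alg at $w_0$ and $H_\lambda(w_0)$ is not proportional to $\rho(w_0)$. The duality then gives $d\lambda(w_0)$ not proportional to $\omega(w_0)$, so the restriction of $d\lambda(w_0)$ to the hyperplane $\ker\omega(w_0)\subset T_{w_0}(T^\ast(X)\smallsetminus 0)$ is a nonzero linear form; I pick any $\partial_\nu$ in this hyperplane with $\partial_\nu\lambda(w_0)\neq 0$. Applying~\cite[Proposition~2.10]{de2} in the reverse direction with this specific $\partial_\nu$ yields that \eqref{invertibletangentvector} is bijective, and by construction $\langle\partial_\nu,\omega(w_0)\rangle=0$.

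The only genuinely non-routine ingredient in this plan is the cited characterization~\cite[Proposition~2.10]{de2}; its proof uses the Riesz spectral projection $E(w)$ onto the generalized eigenspace of $\lambda(w)$, which is smooth near $w_0$ by the hypothesis on $\varUpsilon(P)$, together with standard perturbation theory for a semisimple eigenvalue to identify the principal-type map with the restriction of $E(w_0)\partial_\nu P(w_0)E(w_0)$ to $\kernel P(w_0)$, all of whose eigenvalues equal $\partial_\nu\lambda(w_0)$; bijectivity then reads as $\kernel P(w_0)\cap\ran P(w_0)=0$ (equivalent to geom $=$ alg) plus $\partial_\nu\lambda(w_0)\neq 0$. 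Given that identification, everything else here is elementary symplectic linear algebra in $T_{w_0}(T^\ast(X)\smallsetminus 0)$, and I do not anticipate any further obstacle.
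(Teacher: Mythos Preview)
Your proposal is correct and follows essentially the same route as the paper: both reduce to~\cite[Proposition~2.10]{de2} together with the symplectic duality ($H_\lambda$ proportional to $\rho$ iff $d\lambda$ proportional to $\omega$), and the forward/converse arguments are the same linear-algebra observations about finding $\partial_\nu\in\ker\omega(w_0)$ with $\partial_\nu\lambda(w_0)\ne 0$. The only cosmetic difference is that the paper extracts from the \emph{proof} of~\cite[Proposition~2.10]{de2} the intermediate determinant criterion $\partial_\nu^k|P(w_0)|\ne 0$ (with $k=\dim\kernel P(w_0)$) and then uses the factorization $|P(w)|=\lambda(w)^m e(w,0)$ to pass to $\partial_\nu\lambda(w_0)\ne 0$, whereas you package that passage directly into your black-box reading of the cited proposition.
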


Note that as suggested in the statement of the proposition, the hypotheses
$|P(w_0)|=0$ and $\{w_0\}\times\{0\}\cap\varUpsilon(P)=\emptyset$ imply that
the section of eigenvalues $\lambda(w)$ of $P(w)$ satisfying $\lambda(w_0)=0$
is a uniquely defined $C^\infty$ function in a neighborhood of $w_0$
according to the discussion following Definition \ref{def:constantchars}.

\begin{proof}
Inspecting the beginning of the proof of~\cite[Proposition~2.10]{de2} we conclude that
the same arguments show that $P(w)$ satisfies Definition \ref{def:systemprincip} for some tangent vector
$\partial_\nu\in T_{w_0}(T^\ast(X)\smallsetminus 0)$
such that $\langle\partial_\nu,\omega(w_0)\rangle=0$
if and only if
\[
\partial_\nu^k|P(w_0)|\ne 0,\quad
\langle\partial_\nu,\omega(w_0)\rangle=0,
\quad k=\dim\kernel P(w_0).
\]
Now, if $P(w)$ is of principal type at $w_0$
then the geometric multiplicity $k$ of $\lambda$ is equal to the algebraic multiplicity $m$
at $w_0$ by~\cite[Proposition~2.10]{de2}. Thus
\[
\partial_\nu^m|P(w_0)|\ne 0,\quad |P(w)-\lambda\I_N|=(\lambda(w)-\lambda)^m e(w,\lambda)
\]
for $w$ in a neighborhood of $w_0$ where $e(w,\lambda)\ne 0$.
Setting $\lambda=0$ we obtain $0\ne \partial_\nu^m|P(w_0)|=(\partial_\nu\lambda(w_0))^m e(w_0,0)$.
If $\langle\partial_\nu,\omega(w_0)\rangle=0$
and $d\lambda(w_0)=\mu\omega(w_0)$ at $w_0$
for some $\mu\in\C$, then
$0\ne\partial_\nu\lambda(w_0)=\mu\langle\partial_\nu,\omega(w_0)\rangle=0$,
a contradiction.

To prove sufficiency, we note that if $H_\lambda(w_0)$
is not proportional to the radial vector field at $w_0$ then
we can find a tangent vector $\partial_\nu\in T_{w_0}(T^\ast(X)\smallsetminus 0)$
such that $\langle \partial_\nu,d\lambda(w_0)\rangle\ne 0$
and $\langle \partial_\nu, \omega (w_0)\rangle=0$. But this gives
$\partial_\nu^m|P(w_0)|\ne 0$ where $m$ equals the algebraic and geometric multiplicity
at $w_0$, so by the first paragraph we conclude that
$P(w)$ satisfies Definition \ref{def:systemprincip} for a tangent vector
$\partial_\nu$ such that
$\langle\partial_\nu,\omega(w_0)\rangle=0$.
This completes the proof.
\end{proof}

\section{Minimal bicharacteristics}
\label{sec:minimalbicharacteristics}

The purpose of this section is to recall the geometry
that occurs when condition $(\varPsi)$ is violated.
For a more thorough discussion as well as proofs for
the results below we refer the reader to~\cite[Section~2]{jw},
on which the following review is based.

Let us first fix some terminology. If $\gamma\subset T^\ast(X)$
is a curve with a parametrization $t\mapsto\gamma(t)$ defined (at least) for $a\le t\le b$,
we shall say that $\im qp$
changes sign from $-$ to $+$ on $\gamma$ if
\begin{equation}\label{notintropsi}
\im qp(\gamma(a))<0<\im qp(\gamma(b)).
\end{equation}
If $\gamma |_{[a',b']}$ is the restriction of $\gamma$ to $[a',b']$
and we have
\begin{itemize}
\item[i)] $\im qp (\gamma(t))= 0$ for $a'\le t\le b'$,
\item[ii)] for every $\varepsilon > 0$
one can find $a' - {\varepsilon} < s_- < a'$ and $b' < s_+ < b' + {\varepsilon}$ such that
$\im qp(\gamma(s_-)) < 0 < \im qp(\gamma(s_+))$,
\end{itemize}
then we shall say that $\im qp$ \emph{strongly}
changes sign from $-$ to $+$ on $\gamma |_{[a',b']}$.
If $p$ and $q$ are smooth homogeneous functions and
$\gamma$ is a bicharacteristic of $\re qp$ where $q\ne 0$ and
\eqref{notintropsi} holds, then we can always find a
subinterval of $\gamma$ where $\im qp$ strongly changes sign
from $-$ to $+$ by~\cite[Lemma~2.5]{jw}.

Consider now the case where $p\in C^\infty(T^\ast(\R^n)\smallsetminus 0)$
satisfies $\re p=\xi_1$.
If $\gamma = I \times \{w_0\}$, $I= [a,b]$, we shall by $|\gamma|$
denote the usual arc length in $\mathbb{R}^{2n}$, so that $|\gamma|=b-a$.
Furthermore, we will assume that all curves are bicharacteristics of $\re p=\xi_1$, that
is, $w_0=(x',0,\xi')\in\mathbb{R}^{2n-1}$.
We shall then employ the following notation.

\begin{dfn}
Let $\gamma = [a,b] \times \{w_0\}$, and let ${\gamma}_j = [a_j, b_j] \times \{w_j\}$. If
$\lim_{j\to\infty}w_j = w_0$,
$\liminf_{j \to \infty} a_j \ge a$ and $\limsup_{j \to \infty} b_j \le b$,
then we shall write ${\gamma}_j \dashrightarrow {\gamma}$ as $j\to\infty$.
If in addition $\lim_{j \to \infty} a_j=a$ and $\lim_{j \to \infty} b_j=b$ then we
shall write $\gamma_j\to\gamma$ as $j\to\infty$.
\end{dfn}

\begin{dfn}\label{def:minimal02}
If $\gamma$ is a bicharacteristic of $\re p=\xi_1$ and there exists a sequence
$\{\gamma_j\}_{j=1}^\infty$ of bicharacteristics of $\re p$ such that $\im p$
strongly changes sign from $-$ to $+$ on $\gamma_j$
for all $j$
and $\gamma_j\dashrightarrow\gamma$ as $j\to\infty$, we set
\begin{equation*}
L_p({\gamma}) = 
\inf\{\liminf_{j\to\infty} |\gamma_j| : \gamma_j \dashrightarrow \gamma \,
\text{ as }j\to\infty\},
\end{equation*}
where the infimum is taken over all such sequences.
We shall write $L_p({\gamma})\ge 0$ to signify the existence of such a sequence $\{\gamma_j\}_{j=1}^\infty$.
\end{dfn}

Note that the definition of $L_p(\gamma)$ corresponds to what is denoted by $L_0$ in~\cite[p.~97]{ho4},
when $\gamma = [a,b] \times \{w_0\}$ is given by
\begin{equation*}
a\leq x_1 \leq b, \quad x'=(x_2, \ldots, x_n)=0, \quad \xi=\varepsilon_n,
\end{equation*}
and $\im p(a,w_0)<0<\im p(b,w_0)$.
For a proof of this claim, see the remark following~\cite[Definition~2.9]{jw}.
Here $\varepsilon_n=(0,\ldots,0,1)\in \mathbb{R}^n$, and we shall
in what follows write $\xi^0$ in place of $\varepsilon_n'$.
Note also that
if $L_p(\gamma)$ exists, then $L_p(\gamma)\le |\gamma|$ by definition. Moreover,
if $\im p$ strongly changes sign from $-$ to $+$ 
on ${\gamma}$ then it is easy to see that the conditions of
Definition \ref{def:minimal02} are satisfied.

We now recall the definition of a minimal bicharacteristic.

\begin{dfn}\label{dfn:minimal1}
Let $I\subset\mathbb{R}$ be a compact interval possibly reduced to a point and
let $\tilde{\gamma}:I\to T^\ast(X)\smallsetminus 0$
be a characteristic point or a compact one dimensional bicharacteristic interval
of the homogeneous function $p\in C^\infty(T^\ast(X)\smallsetminus 0)$.
Suppose that there exists a function $q\in C^\infty(T^\ast(X)\smallsetminus 0)$
and a $C^\infty$
homogeneous canonical transformation $\chi$ from an open conic neighborhood $V$ of
\[
\varGamma=\{(x_1,0,\varepsilon_n): x_1\in I\}
\subset T^\ast (\mathbb{R}^n)
\]
to an open conic neighborhood
$\chi(V)\subset T^\ast(X)\smallsetminus 0$
of $\tilde{\gamma}(I)$ such that
\begin{itemize}
\item[(i)] $\chi(x_1,0,\varepsilon_n)=\tilde{\gamma}(x_1)$ and $\re \chi^\ast(qp )=\xi_1$
in $V$,
\item[(ii)] $L_{\chi^\ast(qp)}(\varGamma)=|\varGamma|$.
\end{itemize}
Then we say that $\tilde{\gamma}(I)$ is a minimal characteristic point
or a minimal bicharacteristic interval
if $|I|=0$ or $|I|>0$,
respectively.
\end{dfn}

The definition of the arclength is of course dependent of the choice of Riemannian
metric on $T^\ast (\mathbb{R}^n)$. However, since we are only using the arclength
to compare curves where one is contained within the other and both are parametrizable through condition (i),
the results here and Definition
\ref{dfn:minimal1} in particular are independent of the chosen metric.

Some comments on the implications of Definition \ref{dfn:minimal1}
are in order. First, note that condition (i) implies that
$q\ne 0$ and $\re H_{qp}\ne 0$ on $\tilde{\gamma}$,
and that by definition,
a minimal bicharacteristic interval
is a compact one dimensional bicharacteristic interval
(see~\cite[Definition~26.4.9]{ho4}).
If $\im qp$ changes sign from $-$ to $+$ on a bicharacteristic
$\gamma\subset T^\ast(X)\smallsetminus 0$
of $\re qp$ where $q\neq 0$, then we can always find a minimal characteristic point $\tilde{\gamma}
\in\gamma$ or a minimal bicharacteristic interval
$\tilde{\gamma}\subset\gamma$. In the language of~\cite[Section~26.4]{ho4},
$\tilde\gamma$ is the subset of $\gamma$ with the property that 
$\im qp$ changes sign from $-$ to $+$ on bicharacteristics of $\re qp$ 
arbitrarily close to $\tilde\gamma$. For a proof of this fact,
see~\cite[p.~97]{ho4} or the discussion preceding~\cite[Proposition~2.12]{jw}.
In fact, we have the following result.

\begin{prop}\label{prop:minimal03}
Let $\gamma=[a,b]\times \{w_0\}$ be a bicharacteristic of $\re p=\xi_1$,
and assume that $L(\gamma)\ge 0$.
Then there exists a minimal characteristic point $\varGamma\in\gamma$ of $p$ or
a minimal bicharacteristic interval $\varGamma\subset\gamma$ of $p$ of length $L(\gamma)$
if $L(\gamma)=0$ or $L(\gamma)>0$, respectively.
If $\varGamma=[a_0,b_0]\times \{w_0\}$ and $a_0<b_0$, that is, $L(\gamma)>0$, then
\begin{equation*}
\im p_{(\alpha)}^{(\beta)}(t,w_0)=0
\end{equation*}
for all $\alpha, \beta$ with
$\beta_1=0$ if
$a_0\le t\le b_0$.
Conversely, if $\gamma$ is a minimal characteristic point or
a minimal bicharacteristic interval
then $L(\gamma)=|\gamma|$.
\end{prop}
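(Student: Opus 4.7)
The plan is to construct $\varGamma$ as the limit of a minimizing sequence realizing $L(\gamma)$ and then verify its defining properties. Since $L(\gamma)\ge 0$, I would first pick a sequence of bicharacteristics $\gamma_j=[a_j,b_j]\times\{w_j\}$ with $\gamma_j\dashrightarrow\gamma$, $\im p$ strongly changing sign from $-$ to $+$ on each $\gamma_j$, and $\liminf_{j\to\infty}|\gamma_j|=L(\gamma)$. After passing to a subsequence I may assume $w_j\to w_0$, $a_j\to a_0$, and $b_j\to b_0$ with $a\le a_0\le b_0\le b$ and $b_0-a_0=L(\gamma)$. Setting $\varGamma=[a_0,b_0]\times\{w_0\}$, the same sequence now satisfies $\gamma_j\to\varGamma$, which establishes $L_p(\varGamma)\le|\varGamma|$. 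For the reverse inequality, any sequence $\{\gamma'_k\}$ with $\gamma'_k\dashrightarrow\varGamma$ also satisfies $\gamma'_k\dashrightarrow\gamma$ (since $a\le a_0$ and $b_0\le b$), so $\liminf_k|\gamma'_k|\ge L(\gamma)=|\varGamma|$. Hence $L_p(\varGamma)=|\varGamma|$; together with the identity canonical transformation and $q\equiv 1$ this verifies Definition \ref{dfn:minimal1}, yielding the minimal characteristic point or interval $\varGamma$.

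The hard part is proving the vanishing of the transverse derivatives $\im p_{(\alpha)}^{(\beta)}(t,w_0)=0$ on $\varGamma$ for all $\alpha$ and all $\beta$ with $\beta_1=0$. The base case $\alpha=\beta=0$ is immediate: continuity of $\im p$, combined with $\im p(t,w_j)=0$ on $[a_j,b_j]$ and the convergence $a_j\to a_0$, $b_j\to b_0$, $w_j\to w_0$, forces $\im p(t,w_0)=0$ on $[a_0,b_0]$. For the higher-order derivatives I would argue by contradiction: assuming that some derivative $\im p_{(\alpha)}^{(\beta)}(t_0,w_0)$ with $\beta_1=0$ fails to vanish at an interior point $t_0\in(a_0,b_0)$, a Taylor expansion of $\im p$ at $(t_0,w_0)$ in the transverse variables should single out a perturbation direction $v$ such that the zero locus of $t\mapsto\im p(t,w_0+\tau v)$ near $t_0$ is strictly shorter than $|\varGamma|$ for small $\tau>0$. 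Feeding such perturbed bicharacteristics into a witness sequence $\gamma'_k\dashrightarrow\varGamma$ would then yield $\liminf_k|\gamma'_k|<|\varGamma|$, contradicting the minimality established in the first step. The main technical difficulty is organizing the perturbation so that it simultaneously produces a strict sign change from $-$ to $+$ and respects the bicharacteristic geometry; this argument follows the strategy developed for the scalar case in~\cite{jw}.

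For the converse, suppose $\gamma=[a,b]\times\{w_0\}$ is a minimal characteristic point or a minimal bicharacteristic interval per Definition \ref{dfn:minimal1}, with witnesses $q\in C^\infty$ and a canonical transformation $\chi$ satisfying $\re\chi^\ast(qp)=\xi_1$ and $L_{\chi^\ast(qp)}(\varGamma_0)=|\varGamma_0|$ for the straight model $\varGamma_0=\{(x_1,0,\varepsilon_n):x_1\in[a,b]\}$. Since $\gamma$ is already given in the normal form $\re p=\xi_1$, I would invoke the invariance of $L_p$ under the composition of $\chi$ with multiplication by the non-vanishing factor $q$---an invariance property extracted along the lines of the corresponding result in~\cite{jw}---to conclude $L_p(\gamma)=L_{\chi^\ast(qp)}(\varGamma_0)=|\varGamma_0|=|\gamma|$, as required.
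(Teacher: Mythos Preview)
The paper does not give its own proof of this proposition; it simply writes ``See the proof of~\cite[Proposition~2.12]{jw}.'' Your sketch is therefore more detailed than what the paper itself supplies, and it is consistent with the approach carried out in~\cite{jw}: construct $\varGamma$ from a minimizing subsequence, verify $L_p(\varGamma)=|\varGamma|$ by the two-sided comparison you indicate, and obtain the vanishing of the transverse derivatives by a perturbation/contradiction argument against minimality. Your handling of the converse via the invariance encoded in~\cite[Lemma~26.4.10]{ho4} is likewise the intended route (the paper invokes exactly this lemma in the paragraph following the proposition). Two small remarks: to match Definition~\ref{dfn:minimal1} literally you need a trivial affine canonical transformation sending $w_0$ to $(0,\varepsilon_n')$ rather than the identity, and the derivative-vanishing step---which you correctly flag as the technical core---is not just a Taylor-expansion heuristic but requires the careful organization of sign changes carried out in~\cite{jw} (building on the machinery of~\cite[Section~26.4]{ho4}); your deferral to that source is exactly what the paper does.
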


\begin{proof}
See the proof of~\cite[Proposition~2.12]{jw}.
\end{proof}

Keeping the notation from Definition \ref{dfn:minimal1},
we note in view of Proposition \ref{prop:minimal03} that
condition (ii)
implies that
there exists a sequence $\{\varGamma_j\}_{j=1}^\infty$
of bicharacteristics of $\re \chi^\ast (qp)$ on which $\im\chi^\ast (qp)$ strongly changes
sign from $-$ to $+$,
such that $\varGamma_j\to\varGamma$
as $j\to\infty$.
By our choice of terminology, the sequence $\{\varGamma_j\}_{j=1}^\infty$
may simply be a sequence of points when $L(\varGamma)=0$. Conversely, if
$\{\varGamma_j\}_{j=1}^\infty$
is a point sequence then $L(\varGamma)=0$.
Also note that if $\tilde{\gamma}(I)$ is minimal, and
condition (i) in Definition \ref{dfn:minimal1} is satisfied
for some other choice of maps $q',\chi'$, then condition (ii)
also
holds for $q',\chi'$;
in other words,
\[
L_{\chi^\ast(qp)}(\varGamma)=|\varGamma|=L_{(\chi')^\ast(q'p)}(\varGamma).
\]
This follows
by an application of Proposition \ref{prop:minimal03}
together with~\cite[Lemma~26.4.10]{ho4}.
It is then also clear that $\tilde{\gamma}(I)$ is a minimal characteristic point
or a minimal bicharacteristic interval of 
the homogeneous function $p\in C^\infty(T^\ast(X)\smallsetminus 0)$ if and only if
$\varGamma(I)$ is a minimal characteristic point
or a minimal bicharacteristic interval of 
$\chi^\ast(qp)\in C^\infty(T^\ast(\mathbb{R}^n)\smallsetminus 0)$
for any maps $q$ and $\chi$ satisfying condition (i) in Definition \ref{dfn:minimal1}.

\begin{dfn}\label{dfn:minimal2}
A minimal bicharacteristic
interval $\varGamma=[a_0,b_0]\times \{w_0\}\subset T^\ast(\mathbb{R}^n)\smallsetminus 0$
of the homogeneous function $p=\xi_1+i\im p$ of degree $1$
is said to be $\varrho$-minimal if there exists a
$\varrho\ge 0$ such that $\im p$ vanishes in a neighborhood of
$[a_0+\kappa,b_0-\kappa]\times \{w_0\}$
for any $\kappa>\varrho$.
\end{dfn}

By a $0$-minimal bicharacteristic interval $\varGamma$ we thus mean a minimal bicharacteristic interval such
that the imaginary part vanishes in a neighborhood of any proper closed subset of $\varGamma$. Note that
this does not hold for minimal bicharacteristic intervals in general.
However, the following result does hold, which concludes this section.

\begin{thm}\label{thm:minimal07}
If $\varGamma$ is a minimal bicharacteristic interval
in $T^\ast(\mathbb{R}^n)\smallsetminus 0$ of the homogeneous function $p=\xi_1+i\im p$
of degree $1$, where the imaginary part is independent of $\xi_1$, then
there exists a sequence $\{\varGamma_j\}_{j=1}^\infty$ of $\varrho_j$-minimal
bicharacteristic intervals of $p$ such that $\varGamma_j\to\varGamma$
and $\varrho_j\to 0$ as $j\to\infty$.
\end{thm}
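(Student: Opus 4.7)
The plan is to first extract the sign-change approximators guaranteed by minimality of $\varGamma$, then replace each by the minimal sub-interval produced by Proposition \ref{prop:minimal03}, and finally verify that these sub-intervals are $\varrho_j$-minimal with $\varrho_j\to 0$.

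Write $\varGamma=[a,b]\times\{w_0\}$. Minimality together with Proposition \ref{prop:minimal03} gives $L_p(\varGamma)=|\varGamma|$, so unravelling the definition of $L_p$ produces a sequence $\gamma_j=[a_j,b_j]\times\{w_j\}$ of bicharacteristics on which $\im p$ strongly changes sign from $-$ to $+$, with $\gamma_j\dashrightarrow\varGamma$ and $\lim|\gamma_j|=|\varGamma|$. The pinch $\liminf a_j\ge a$, $\limsup b_j\le b$, combined with $b_j-a_j\to b-a$, forces $a_j\to a$ and $b_j\to b$, so $\gamma_j\to\varGamma$. For each $j$ the constant sequence $\gamma_j,\gamma_j,\ldots$ witnesses $0\le L_p(\gamma_j)\le|\gamma_j|$, so Proposition \ref{prop:minimal03} applied to $\gamma_j$ yields a minimal sub-interval $\varGamma_j\subset\gamma_j$ of length $L_p(\gamma_j)$. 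A diagonal extraction shows $L_p(\gamma_j)\to|\varGamma|$: otherwise one could choose, for $j$ in a subsequence with $L_p(\gamma_j)<|\varGamma|-\eta$, sign-change bicharacteristics $\delta_{j,k_j}\dashrightarrow\gamma_j$ of length below $|\varGamma|-\eta/2$, and the nested convergence would give $\delta_{j,k_j}\dashrightarrow\varGamma$ with $\liminf|\delta_{j,k_j}|<L_p(\varGamma)$, contradicting minimality of $\varGamma$. Hence $|\varGamma_j|\to|\varGamma|$, and $\varGamma_j\subset\gamma_j\to\varGamma$ forces $\varGamma_j\to\varGamma$.

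Writing $\varGamma_j=[\alpha_j,\beta_j]\times\{w_j\}$, let $\varrho_j$ denote the infimum of those $\varrho\ge 0$ for which $\im p$ vanishes in a neighborhood of $[\alpha_j+\kappa,\beta_j-\kappa]\times\{w_j\}$ whenever $\kappa>\varrho$. The remaining task is to show $\varrho_j\to 0$. Suppose not; passing to a subsequence $\varrho_j\ge\varrho_0>0$ yields, for each $j$, a point $z_j=(x_{1,j},w_j)$ with $x_{1,j}\in[\alpha_j+\varrho_0,\beta_j-\varrho_0]$ in every neighborhood of which $\im p$ takes nonzero values. After a further subsequence $z_j\to(x_\infty,w_0)$ with $x_\infty\in[a+\varrho_0,b-\varrho_0]$, a deep interior point of $\varGamma$. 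The hypothesis that $\im p$ is independent of $\xi_1$, combined with the vanishing on $\varGamma$ of all derivatives with $\beta_1=0$ supplied by Proposition \ref{prop:minimal03}, gives full infinite-order flatness of $\im p$ on $\varGamma$. One then leverages the non-flatness at $z_j$, together with the sign-change structure of the approximators of $\varGamma_j$, to produce sign-change bicharacteristics of length at most $|\varGamma|-\varrho_0+o(1)$ that $\dashrightarrow$-converge to $\varGamma$, violating $L_p(\varGamma)=|\varGamma|$.

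The main obstacle is this last step, namely translating the failure of local flatness of $\im p$ at an interior point of the minimal sub-interval $\varGamma_j$ into the existence of strictly shorter sign-change bicharacteristics $\dashrightarrow$-converging to $\varGamma$. It requires a delicate geometric analysis of the zero set of $\im p$ near $\varGamma$; the $\xi_1$-independence hypothesis is what ensures that local non-flatness can be witnessed as sign-change behavior along nearby bicharacteristic slices and thus reproduced as shorter approximators in the sense of Definition \ref{def:minimal02}.
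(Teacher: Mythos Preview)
The paper does not give its own proof of this theorem; it simply refers to \cite[Theorem~2.18]{jw}. So there is no in-paper argument to compare your proposal against.

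Your outline is reasonable through the construction of the minimal sub-intervals $\varGamma_j\subset\gamma_j$ and the diagonal argument forcing $|\varGamma_j|\to|\varGamma|$ and hence $\varGamma_j\to\varGamma$. But the proposal is not a proof: the final step, which you yourself flag as ``the main obstacle,'' is left as a description of what must be done rather than an argument. That step is the entire content of the theorem; everything before it is bookkeeping with Definition~\ref{def:minimal02} and Proposition~\ref{prop:minimal03}.

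Two points make the missing step genuinely nontrivial. First, your phrase ``non-flatness at $z_j$'' is inaccurate: since $z_j\in\varGamma_j$ and $\varGamma_j$ is itself minimal, Proposition~\ref{prop:minimal03} gives that $\im p$ vanishes to infinite order at $z_j$. What you actually have is the much weaker information that $\im p$ is not \emph{identically zero} in any neighborhood of $z_j$. A nonzero value of $\im p$ at some nearby point $z$ has a sign, but only one; to manufacture a strong sign change on a shorter interval you must couple that value with a value of the opposite sign on the \emph{same} bicharacteristic, and you must control where along the bicharacteristic this happens so that the resulting interval has length at most $|\varGamma_j|-\varrho_0+o(1)$. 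Second, the approximators witnessing minimality of $\varGamma_j$ live on bicharacteristics through points $w$ with $w\to w_j$, whereas the nonzero value of $\im p$ you have is at some unrelated nearby point; the $\xi_1$-independence lets you slide in $\xi_1$, but you still have to match the base-and-$\xi'$ coordinates, and nothing in your sketch does this. Until this step is actually carried out, the proposal stops precisely where the work begins.
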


\begin{proof}
See the proof of~\cite[Theorem~2.18]{jw}.
\end{proof}

\section{Solvability and microlocal inclusion relations}
\label{sec:mainresultsforsystems}

If $u=(u_j)$ and $v=(v_j)$ are vectors in $\C^N$ with
$u_j$ and $v_j$ in $L^2(X,\C)$ for $1\le j\le N$, let
\begin{equation}\label{L2pairing}
(u,v)_{L^2(X,\C^N)}=\sum_{j=1}^N (u_j,v_j)
\end{equation}
where $(\phantom{i},\phantom{i})$ denotes the usual scalar product on $L^2(X,\C)$.
Recall that the Sobolev space $H_{(s)}(X,\C)$, $s\in\mathbb{R}$, is a local space,
that is, if $\varphi\in C_0^\infty(X,\C)$ and $\psi\in H_{(s)}(X,\C)$
then $\varphi \psi \in H_{(s)}(X,\C)$, and the corresponding operator
of multiplication is continuous. If $\|\phantom{i}\|_{(s)}$ is the usual
norm on $H_{(s)}(X,\C)$
we shall with abuse of notation let $H_{(s)}(X,\C^N)$ be the space of distributions
$u=(u_j)\in\De(X,\C^N)$ such that $u_j\in H_{(s)}(X,\C)$
for $1\le j\le N$, equipped with the norm
\[
\|u\|_{(s)}=\Big(\sum_{j=1}^N \|u_j\|_{(s)}^2\Big)^{1/2}.
\]
Thus we can define
\[
H_{(s)}^{\mathrm{loc}}(X,\C^N)=\{u\in \De(X,\C^N) : 
\varphi u\in H_{(s)}(X,\C^N), \forall \varphi \in C_0^\infty(X,\C)\}.
\]
This is a Fréchet space, and its dual with respect to the pairing
\eqref{L2pairing} is
\[
H_{(-s)}^{\mathrm{comp}}(X,\C^N)
=H_{(-s)}^{\mathrm{loc}}(X,\C^N)\cap \E'(X,\C^N).
\]
Recall also that the wave front set of $u=(u_j)\in\De(X,\C^N)$ is defined
as the union of $W\! F(u_j)$. For a system $A$ of pseudodifferential operators in $X$
we shall as usual let $W\! F(A)$ be the smallest closed conic set in $T^\ast(X)\smallsetminus 0$
such that $A\in\varPsi^{-\infty}$ in the complement.

\begin{dfn}\label{defrangesystems}
If $K \subset T^{\ast}(X) \smallsetminus 0$ is a compactly based cone we
shall say that the range of the $N\times N$ system $Q\in \varPsi_{\mathrm{cl}}^k(X)$
is microlocally contained in the range of the $N\times N$ system $P\in \varPsi_{\mathrm{cl}}^m(X)$ at $K$ if there
exists an integer $N_0$ such that for every
$f\in H_{(N_0)}^{\mathrm{loc}}(X,\C^N)$ one can find a $u\in \De(X,\C^N)$
with $W\! F(Pu-Qf)\cap K = \emptyset$.
\end{dfn}

\noindent If $\I_N\in\varPsi_{\mathrm{cl}}^0(X)$ is the
identity $\I_N:u\mapsto u\in\De(X,\C^N)$ then
we obtain from Definition \ref{defrangesystems}
the definition of microlocal solvability for a system of
pseudodifferential operators
(see~\cite[Definition~26.4.3]{ho4} and the discussion following equation (1.1) in~\cite{de})
by setting $Q=\I_N$.
Thus, the range of the identity is microlocally contained in the range of $P$ at $K$
if and only if $P$ is microlocally solvable at $K$.
Note also that if $P$ and $Q$ satisfy Definition \ref{defrangesystems} for some integer $N_0$,
then due to the inclusion
\[H_{(t)}^{\mathrm{loc}}(X,\C^N)\subset H_{(s)}^{\mathrm{loc}}(X,\C^N), \quad \textrm{if } s<t,
\]
the statement also holds for any integer $N' \geq N_0$. Hence $N_0$ can always be assumed to be positive.
Furthermore, the property is preserved if $Q$ is composed with a properly supported $N\times N$
system $Q_1\in \varPsi_{\mathrm{cl}}^{k'}(X)$ from the right.
Indeed,
let $g$ be an arbitrary element in
$H_{(N_0+k')}^{\mathrm{loc}}(X,\C^N)$. Then $f=Q_1 g\in H_{(N_0)}^{\mathrm{loc}}(X,\C^N)$
since $Q_1$ is continuous
\[
Q_1:H_{(s)}^{\mathrm{loc}}(X,\C^N)\rightarrow H_{(s-k')}^{\mathrm{loc}}(X,\C^N)
\]
for every $s\in\mathbb{R}$,
so by Definition \ref{defrangesystems}
there exists a $u\in \De(X,\C^N)$ with $W\! F(Pu-Qf)\cap K = \emptyset$.
Hence the range of $QQ_1$
is microlocally contained in the range of $P$ at $K$ with the integer
$N_0$ replaced by $N_0+k'$.

The property given by Definition \ref{defrangesystems} is also
preserved under composition of both $P$ and $Q$ with a properly supported $N\times N$
system from the left. In view of \eqref{defsys} this follows immediately
from the fact that properly supported
scalar pseudodifferential operators are microlocal, that is,
\[
W\! F(Au)\subset W\! F(u)\cap W\! F(A), \quad u\in \De(X).
\]

Just as microlocal solvability of a pseudodifferential operator $P$ implies an a priori estimate for the adjoint
$P^\ast$, we have the following result for systems satisfying Definition \ref{defrangesystems}.

\begin{lem}\label{lemrange1}Let $K \subset T^{\ast}(X) \smallsetminus 0$ be a compactly based cone. Let
$Q\in \varPsi_{\mathrm{cl}}^k(X)$ and $P\in \varPsi_{\mathrm{cl}}^m(X)$
be properly supported $N\times N$ systems such that
the range of $Q$ is microlocally contained in the range of $P$ at $K$. If $Y\Subset X$ satisfies $K\subset
T^*(Y)$ and if $N_0$ is the integer in Definition \ref{defrangesystems}, then for every positive integer $\kappa$
we can find a constant $C$, a positive integer $\nu$ and a properly supported $N\times N$ system
$A$ with $W\! F(A)\cap K= \emptyset$ such that
\begin{equation}\label{rangeeq1}
\|Q^*v\|_{(-N_0)}\leq C(\|P^*v\|_{(\nu)}+\|v\|_{(-N_0-\kappa-n)}+\|Av\|_{(0)})
\end{equation}for all $v\in C_0^\infty(Y)$.
\end{lem}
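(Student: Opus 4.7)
This lemma is the $N\times N$-system counterpart of \cite[Lemma~3.2]{jw}, and its proof is obtained by carrying the scalar argument over essentially verbatim: pseudodifferential calculus on $\mathbb{C}^N$-valued distributions is formally identical to the scalar calculus, with the pairing \eqref{L2pairing} in place of the scalar $L^2$ pairing. The underlying scheme is the classical functional-analytic duality by which a microlocal existence statement is converted into an a priori estimate for the adjoint, in the spirit of \cite[Theorem~26.4.7]{ho4}.

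I would proceed in three steps. First, choose compact conic neighborhoods $K\Subset V\subset T^*(Y)\smallsetminus 0$ and a properly supported $N\times N$ system $B\in\varPsi_{\mathrm{cl}}^0(X)$ which equals $\mathrm{I}_N$ microlocally on $K$ with $W\! F(B)\subset V$; then $A=\mathrm{I}_N-B$ is properly supported, $W\! F(A)\cap K=\emptyset$, and this is the operator appearing in \eqref{rangeeq1}. Second, use Definition \ref{defrangesystems} together with the Banach--Steinhaus/open mapping theorem, applied to an appropriate Fr\'echet space of admissible triples $(f,u,r)$, to upgrade the qualitative hypothesis to a quantitative selection: there exist positive integers $M$ and $\mu=N_0+\kappa$, a constant $C_0$, and for each $f\in H_{(N_0)}^{\mathrm{comp}}(Y,\mathbb{C}^N)$ distributions $u\in H_{(-M)}^{\mathrm{loc}}(X,\mathbb{C}^N)$ and $r\in H_{(\mu)}^{\mathrm{loc}}(X,\mathbb{C}^N)$ with $Pu-Qf\equiv r\pmod{C^\infty}$ microlocally on a neighborhood of $V$, and $\|u\|_{(-M)}+\|r\|_{(\mu)}\le C_0\|f\|_{(N_0)}$.

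Finally, I would dualize. For $v\in C_0^\infty(Y,\mathbb{C}^N)$ and arbitrary $f\in C_0^\infty(Y,\mathbb{C}^N)$ (dense in $H_{(N_0)}^{\mathrm{comp}}$), pick $u,r$ from the previous step and compute
\begin{equation*}
(Q^*v,f)=(v,Qf)=(P^*v,u)-(B^*v,r)-(A^*v,Pu-Qf)+\text{smoothing}.
\end{equation*}
The first summand is bounded by $\|P^*v\|_{(\nu)}\|u\|_{(-\nu)}\le C\|P^*v\|_{(\nu)}\|f\|_{(N_0)}$ for any $\nu\ge M$, producing the $\|P^*v\|_{(\nu)}$ contribution. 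The second is bounded by $\|B^*v\|_{(-\mu)}\|r\|_{(\mu)}\le C\|v\|_{(-N_0-\kappa-n)}\|f\|_{(N_0)}$ via a Sobolev embedding passing from local to compact support, accounting for the loss of $n$ derivatives. The third is bounded by $\|Av\|_{(0)}$ times seminorms of $Pu-Qf$ already controlled via the previous step (after a commutator adjustment exchanging $A^*v$ with $Av$, the commutator having one lower order and being absorbed into the $\|v\|_{(-N_0-\kappa-n)}$ term); the smoothing remainder is similarly absorbed. Taking the supremum over $\|f\|_{(N_0)}\le 1$ yields \eqref{rangeeq1}. The main obstacle is the functional-analytic upgrade in the second step: the hypothesis is a purely microlocal statement, and one must engineer a Fr\'echet topology on the space of triples $(f,u,r)$ so that projection to the first coordinate is surjective (by Definition \ref{defrangesystems}) before the open mapping theorem can extract the quantitative selection.
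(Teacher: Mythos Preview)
Your proposal is correct and takes essentially the same approach as the paper, which simply states that the proof of the scalar case \cite[Lemma~2.3]{jw} carries over verbatim upon replacing $\mathbb{C}$ by $\mathbb{C}^N$; your three-step outline (microlocal cutoff, closed-graph/open-mapping upgrade to a quantitative selection, dualization) is precisely that scalar argument. One minor remark: you cite \cite[Lemma~3.2]{jw} whereas the paper refers to \cite[Lemma~2.3]{jw}, so you may want to double-check the numbering against the version of \cite{jw} being used; also, as the paper notes, the extra $n$ in the norm $\|v\|_{(-N_0-\kappa-n)}$ is merely a convenience and need not be attributed to a Sobolev embedding.
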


By replacing the range $\C$ by $\C^N$, the proof of
the corresponding result for the scalar case (see~\cite[Lemma~2.3]{jw})
can be used without additional changes to prove Lemma \ref{lemrange1}.
We omit the details.
Note also that
since \eqref{rangeeq1} holds for any $\kappa$,
it is actually superfluous to include the dimension $n$ in the norm
$\|v\|_{(-N_0-\kappa-n)}$. However, for our purposes, it turns out that this is the most convenient formulation.

We will need the following analogue of~\cite[Proposition~26.4.4]{ho4}.
Since the proof again is the same as for the corresponding result for scalar operators,
we refer to the notation and proof of~\cite[Proposition~2.4]{jw} for details.

\begin{prop}\label{prop.26.4.4} Let $K\subset T^\ast(X)\smallsetminus 0$ and
$K'\subset T^\ast(Y)\smallsetminus 0$ be compactly based cones and let $\chi$ be a
homogeneous symplectomorphism from a conic neighborhood of $K'$ to one of $K$ such that
$\chi(K')=K$. Let $A\in I^{m'}(X\times Y, \varGamma')$ and $B\in I^{m''}(Y\times X,(\varGamma^{-1})')$
where $\varGamma$ is the graph of $\chi$, and assume that the $N\times N$ systems
$A$ and $B$ are properly supported and
non-characteristic at the restriction of the graphs of $\chi$ and $\chi^{-1}$ to $K'$ and to $K$ respectively,
while $W\! F'(A)$ and $W\! F'(B)$ are contained in small conic neighborhoods. Then the range of the $N\times N$
system $Q$ of pseudodifferential operators in $X$ is microlocally contained in the range of the $N\times N$ system
$P$ of pseudodifferential operators in $X$ at $K$ if and only if the range of the system
$BQA$ in $Y$ is microlocally contained in the range of the system
$BPA$ in $Y$ at $K'$.
\end{prop}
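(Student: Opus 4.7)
The plan is to mimic the standard Egorov-type argument from the scalar case, reducing the equivalence to the fact that $A$ and $B$ can be inverted microlocally over $K'$ and $K$. Since all the operators act componentwise on $\C^N$-valued distributions, the $N\times N$ system aspect introduces no genuine difficulty; the proof differs from the scalar version only in trivial bookkeeping. First I would fix parametrices: by the ellipticity of $A$ on (a neighborhood of) $\chi^{-1}(K)=K'$ there is an FIO $\widetilde{A}\in I^{-m'}(Y\times X,(\varGamma^{-1})')$ with $A\widetilde{A}\equiv \I_N$ microlocally near $K$, and likewise by ellipticity of $B$ there is $\widetilde{B}\in I^{-m''}(X\times Y,\varGamma')$ with $B\widetilde{B}\equiv \I_N$ near $K'$ and $\widetilde{B}B\equiv \I_N$ near $K$. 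All the composition rules and wave front estimates are assumed to hold in the small neighborhoods where $W\! F'(A)$ and $W\! F'(B)$ live, so the microlocal mappings $\chi$ and $\chi^{-1}$ transport wave front sets in the expected way.

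For the forward implication, suppose the range of $Q$ is microlocally contained in the range of $P$ at $K$ with associated integer $N_0$. Given $g\in H^{\mathrm{loc}}_{(N_0+m')}(Y,\C^N)$, set $f=Ag$, so $f\in H^{\mathrm{loc}}_{(N_0)}(X,\C^N)$ by the mapping properties of $A$. By hypothesis one can find $u\in\De(X,\C^N)$ with $W\! F(Pu-Qf)\cap K=\emptyset$. Put $v=\widetilde{A}u\in\De(Y,\C^N)$. Then $Av=A\widetilde{A}u\equiv u$ microlocally near $K$, so since $P$ is pseudodifferential and hence microlocal, $W\! F(PAv-Pu)\cap K=\emptyset$. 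Combining gives $W\! F(PAv-QAg)\cap K=\emptyset$, and applying $B$ (which transports wave front sets via $\chi^{-1}$, mapping $K$ to $K'$) yields $W\! F(BPAv-BQAg)\cap K'=\emptyset$, as desired.

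The reverse implication is completely symmetric. Suppose the range of $BQA$ is microlocally contained in the range of $BPA$ at $K'$ with integer $N_0'$. Given $f\in H^{\mathrm{loc}}_{(N_0'-m'+\text{shift})}(X,\C^N)$ (the appropriate index obtained from the orders of $\widetilde{A}$ and $A$), set $g=\widetilde{A}f$. By hypothesis there is $v\in\De(Y,\C^N)$ with $W\! F(BPAv-BQAg)\cap K'=\emptyset$. Apply $\widetilde{B}$ and use $\widetilde{B}B\equiv\I_N$ near $K$ together with the transport of wave front sets by $\chi$ to conclude $W\! F(PAv-QAg)\cap K=\emptyset$. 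Finally, since $Ag=A\widetilde{A}f\equiv f$ microlocally near $K$ and $Q$ is microlocal, $W\! F(QAg-Qf)\cap K=\emptyset$; hence $u=Av$ satisfies $W\! F(Pu-Qf)\cap K=\emptyset$.

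The only nontrivial step is verifying that the wave front calculus and the $L^2$-Sobolev mapping properties of the FIOs $A$, $B$, $\widetilde{A}$, $\widetilde{B}$ extend without change to the matrix-valued setting, but this is immediate from \eqref{defsys}: the systems act entry-by-entry on $\C^N$-valued distributions and the componentwise definition of $W\! F$ reduces each microlocal identity above to $N$ copies of the corresponding scalar statement. The main obstacle, if any, is the careful tracking of Sobolev indices through the orders $m'$ and $m''$ of $A$ and $B$, but since Definition \ref{defrangesystems} allows $N_0$ to be increased freely, any finite shift is harmless; this is why the statement tolerates the composition.
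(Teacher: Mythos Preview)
Your proposal is correct and follows essentially the same approach as the paper: the paper's own proof simply remarks that the argument is identical to the scalar case (referring to~\cite[Proposition~2.4]{jw}), and what you have written is precisely a sketch of that scalar Egorov-type argument with the obvious bookkeeping for $N\times N$ systems. Your observation that the matrix-valued wave front and Sobolev calculus reduces componentwise to the scalar case is exactly the point the paper is invoking.
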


It will be convenient to record the following result,
concerning necessary conditions for semi-global solvability
for systems of principal type and constant characteristics,
using the notion of minimal bicharacteristics. Note that
this theorem therefore in a sense corresponds to~\cite[Theorem~26.4.7$'$]{ho4}
in the scalar case.

\begin{thm}\label{thm:solv}
Let $P\in \varPsi_{\mathrm{cl}}^m(X)$
be a properly supported $N\times N$ system of pseudodifferential operators
of principal type
in the open conic set $\varOmega\subset T^{\ast}(X) \smallsetminus 0$.
Let $P_m$ be the homogeneous principal symbol
of $P$, and let $I=[a_0,b_0]\subset \mathbb{R}$ be a compact interval possibly reduced
to a point. Let
$\gamma:I\to \varOmega$ be a curve
belonging to the characteristic set $\varSigma(P_m)$ of $P_m$, and suppose
that $P$ has constant characteristics near $\gamma(I)$.
If $\lambda(w)$ is the unique section of eigenvalues of $P_m(w)$ satisfying $\lambda\circ\gamma = 0$,
and $\gamma$ is either
\begin{enumerate}
\item[(a)] a minimal characteristic point of $\lambda(w)$,
or
\item[(b)] a minimal bicharacteristic interval of $\lambda(w)$
with injective regular projection in $S^\ast (X)$,
\end{enumerate}
then $P$ is not solvable at the cone generated by $\gamma(I)$.
\end{thm}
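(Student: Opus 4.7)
The plan is to reduce the problem to the scalar case and invoke the classical necessity result for scalar operators of principal type, namely H\"ormander's Theorem~26.4.7$'$ in~\cite{ho4}. The engine that makes this possible is the scalarization/block-diagonalization available for systems of principal type with constant characteristics, which was established by the first author in~\cite{de2}. More precisely, since $P$ has constant characteristics near $\gamma(I)$ and $\lambda(w)$ is the uniquely defined $C^\infty$ section of eigenvalues with $\lambda\circ\gamma=0$ (as recorded in Section~\ref{sec:preliminariesforsystems} following Definition~\ref{def:constantchars}), one can find properly supported, microlocally invertible $N\times N$ systems of pseudodifferential operators $A$ and $B$ defined in a conic neighborhood of $\gamma(I)$ such that $APB$ is microlocally of the form
\[
APB \equiv \begin{pmatrix} P_0 & 0 \\ 0 & E \end{pmatrix},
\]
where $P_0\in\varPsi_{\mathrm{cl}}^m(X)$ is a \emph{scalar} pseudodifferential operator with principal symbol $\lambda(w)$ and $E$ is an elliptic $(N-1)\times (N-1)$ system. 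This is a microlocal analogue of diagonalization near a simple eigenvalue, with smoothness guaranteed by the constant characteristics hypothesis.

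Next I would transfer the solvability question to this scalarized form. Because $A$ and $B$ are microlocally invertible near $\gamma(I)$, the range of $P$ and the range of $APB$ are microlocally related on any conic set inside the neighborhood where the reduction is valid, and the same holds for the cone generated by $\gamma(I)$. Thus $P$ is solvable at this cone if and only if $APB$ is solvable there, and since the elliptic block $E$ contributes no obstruction, this in turn is equivalent to solvability of the scalar operator $P_0$ at the cone generated by $\gamma(I)$. Hence it suffices to show that $P_0$ is \emph{not} solvable at this cone.

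For this scalar conclusion, I would directly apply H\"ormander's classical result: if the principal symbol $\lambda$ of $P_0$ admits a minimal characteristic point, or a minimal bicharacteristic interval with injective regular projection into $S^{\ast}(X)$, then $P_0$ fails to be solvable at the cone generated by that set. This is exactly~\cite[Theorem~26.4.7$'$]{ho4} (see also the discussion on p.~97 there and its reformulation via minimal bicharacteristics in~\cite[Section~2]{jw}). The hypotheses (a) or (b) of the theorem provide precisely the geometric data required, and Proposition~\ref{prop:minimal03} and Definition~\ref{dfn:minimal1} ensure that ``minimal'' here coincides with the notion used by H\"ormander. Combining this with the reduction above yields the theorem.

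The main obstacle I anticipate is the reduction step itself: one must check that the block-diagonalization from~\cite{de2} is available in a full conic neighborhood of $\gamma(I)$ (not merely near a single point) and that it respects the semi-global notion of solvability at a whole bicharacteristic interval. A secondary, but essentially routine, point is to verify that the injective regular projection hypothesis on $\gamma$ passes unchanged to $P_0$, since $P_0$ is built from the same principal symbol $\lambda$ that $\gamma$ parametrizes.
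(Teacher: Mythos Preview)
Your reduction overshoots. The block-diagonalization available for systems of principal type with constant characteristics (\cite[Lemma~4.1]{de2}, extended here as Lemma~\ref{lem:prep}) does \emph{not} produce a scalar operator in the characteristic block. It produces a $J\times J$ system $P_{11}$ whose \emph{principal} symbol is $\lambda\,\mathrm{Id}_J$, where $J$ is the (constant) multiplicity of the eigenvalue $\lambda$; the elliptic block has size $(N-J)\times(N-J)$, not $(N-1)\times(N-1)$. When $J>1$ the lower-order terms of $P_{11}$ are genuine $J\times J$ matrices with no reason to be diagonal, so $P_{11}$ is not a scalar operator and \cite[Theorem~26.4.7$'$]{ho4} does not apply to it. Nothing in \cite{de2} furnishes a further reduction to a truly scalar block along a whole bicharacteristic interval.

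The paper therefore does not try to reduce all the way to the scalar theorem. After putting $P$ in the block form with $\sigma(P_{11})=\lambda\,\mathrm{Id}_J$ and normalizing $\lambda=\xi_1+if(x,\xi')$ via Lemma~\ref{sys:symbolvanishafterconjugation}, it instead \emph{reruns} H\"ormander's construction of approximate null solutions directly for the system: one takes $V_\tau=(v_{1,\tau},\ldots,v_{J,\tau},0)\in C_0^\infty(\mathbb{R}^n,\mathbb{C}^N)$ with a common scalar phase $e^{i\tau w}$ solving the eiconal equation, but with $J$ amplitude functions coupled through a linear \emph{system} of transport ODEs (equation~\eqref{ODE}). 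This gives $P^\ast V_\tau=\mathcal{O}(\tau^{-\kappa})$ for every $\kappa$, while $\|V_\tau\|_{(-N_0)}$ stays bounded below, and Lemma~\ref{lemrange1} with $Q=\mathrm{Id}_N$ yields the contradiction. Your anticipated obstacle---whether the reduction is valid over the full interval---is handled by Lemma~\ref{lem:prep}; the real obstacle is that the reduced block is still a system.
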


We wish to point out that although case (b) is not explicitly treated in~\cite{de2},
this result is essentially contained
in~\cite[Theorem~2.7]{de2}.
In fact, for systems of principal type and constant
characteristics,~\cite[Theorem~2.7]{de2} says that
condition \eqref{intropsi} is equivalent to
microlocal solvability near a point $w_0 \in T^{\ast}(X) \smallsetminus 0$
under the additional assumption that the Hamilton vector field $H_\lambda$
of the section of eigenvalues of $P_m(w)$ near $w_0$
satisfying $\lambda(w_0)=0$ does not have the radial direction at $w_0$. 
If $\gamma(I)$ satisfies property (a),
then Definition \ref{dfn:minimal1} implies that $H_\lambda$
is not proportional to the radial vector field at $\gamma(I)$ and that \eqref{intropsi}
cannot hold in any neighborhood of $\gamma(I)$, and since the wave front set
is conic by definition it follows by~\cite[Theorem~2.7]{de2}
that $P$ is not solvable at the cone generated by $\gamma(I)$.
Hence, it only remains to verify Theorem \ref{thm:solv} in the case when
$\gamma$ satisfies property (b). However, note that after locally preparing the system
$P$ to a suitable normal form by means of~\cite[Lemma~4.1]{de2}, the necessity part of~\cite[Theorem~2.7]{de2}
is proved by repetition of the H{\"o}rmander--Moyer proof of the necessity of
condition $(\varPsi)$ for semi-global solvability for scalar operators (see
the proof of~\cite[Theorem~26.4.7]{ho4}). By for example extending the preparation
result~\cite[Lemma~4.1]{de2} to a neighborhood of a one dimensional bicharacteristic interval as
discussed in Section \ref{preparation} below,
the same arguments therefore show that condition $(\varPsi)$ is necessary
also for semi-global solvability for systems of principal type and
constant characteristics. For completeness, we have
included a short proof of Theorem \ref{thm:solv}, which can be found in Section \ref{proofrange}.

We also mention that if $P\in\varPsi_{\mathrm{cl}}^m(X)$
is an $N\times N$ system of principal type
and constant characteristics that is
not microlocally solvable
in any neighborhood of a point $w_0\in T^\ast(X)\smallsetminus 0$,
and the Hamilton vector field $H_\lambda$ of the
section of eigenvalues of $P_m(w)$ near $w_0$
satisfying $\lambda(w_0)=0$ does not have the radial direction at $w_0$,
then $\lambda(w)$ fails to satisfy condition $(\varPsi)$ in every neighborhood of
$w_0$ by~\cite[Theorem~2.7]{de2}. In view
of the alternative version of condition \eqref{intropsi}
given by~\cite[Theorem~26.4.12]{ho4}, it is then easy to see
using~\cite[Theorem~21.3.6]{ho3} and~\cite[Lemma~26.4.10]{ho4}
that $w_0$ is a minimal characteristic point of $\lambda(w)$.

If $\gamma$ is a minimal
bicharacteristic interval of a function $\lambda(w)$ of principal type
such that $\gamma$ is contained in a curve
along which $\lambda(w)$ fails to satisfy condition \eqref{intropsi},
then $\gamma$ has injective regular projection in $S^\ast (X)$ by
the proof of~\cite[Theorem~26.4.12]{ho4}. 
Since solvability at a conic set $K\subset T^\ast(X)\smallsetminus 0$ implies solvability
at any smaller closed cone, 
the discussion preceding Proposition \ref{prop:minimal03}
therefore yields the following corollary to Theorem \ref{thm:solv},
corresponding to~\cite[Theorem~26.4.7]{ho4}.

\begin{cor}\label{thm:solv1}
Let $P\in \varPsi_{\mathrm{cl}}^m(X)$
be a properly supported $N\times N$ system of pseudodifferential operators
of principal type
in the open conic set $\varOmega\subset T^{\ast}(X) \smallsetminus 0$.
Let $P_m$ be the homogeneous principal symbol
of $P$, and let $I=[a_0,b_0]\subset \mathbb{R}$ be a compact interval not
reduced to a point. Let
$\gamma:I\to \varOmega$ be a curve containing a point $\gamma(t_0)\in\varSigma(P_m)$, and suppose
that $P$ has constant characteristics near $\gamma(I)$. If $\varepsilon>0$
is the number given by Definition \ref{def:constantchars}
and $\lambda(w)$ is the unique section of eigenvalues of $P_m$ satisfying $\lambda(\gamma(t_0))=0$,
assume that $|\lambda\circ\gamma(t)|<\varepsilon$ for $a_0\le t\le b_0$ so that
$\lambda(w)$ is a uniquely defined $C^\infty$ function in a neighborhood of $\gamma(I)$.
If there is a homogeneous $C^\infty$ function $q$ in $T^{\ast}(X) \smallsetminus 0$
such that $\gamma$ is a
bicharacteristic interval of $\re q\lambda$ where $\re H_{q\lambda}\ne 0$ and
\[
\im q\lambda(\gamma(a_0))<0<\im q\lambda(\gamma(b_0)),
\]
then $P$ is not solvable at the cone generated by $\gamma(I)$.
\end{cor}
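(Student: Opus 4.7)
The plan is to reduce Corollary \ref{thm:solv1} to Theorem \ref{thm:solv} by extracting a minimal characteristic point or a minimal bicharacteristic interval from the given curve $\gamma$, and then exploiting the monotonicity of microlocal solvability with respect to cone inclusion mentioned just before the statement.

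First, I would observe that since $P$ has constant characteristics near $\gamma(I)$ and $|\lambda\circ\gamma(t)|<\varepsilon$ on $I$, the section of eigenvalues $\lambda$ is a uniquely defined $C^\infty$ function in a conic neighborhood of $\gamma(I)$, and $d\lambda\ne 0$ there by the discussion following Definition \ref{def:conditionpsiforsystems}. Hence $\lambda$ is a scalar homogeneous symbol of principal type in that neighborhood, and it makes sense to speak of (semi-)bicharacteristics of $\lambda$. Moreover, $q$ is nonvanishing on $\gamma$ since $\re H_{q\lambda}\ne 0$ there, and $\im q\lambda$ changes sign from $-$ to $+$ along the bicharacteristic $\gamma$ of $\re q\lambda$ by hypothesis.

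Next, I would apply the discussion preceding Proposition \ref{prop:minimal03} (see also~\cite[p.~97]{ho4}) to the function $\lambda$ and the multiplier $q$: since $\im q\lambda$ changes sign from $-$ to $+$ on the bicharacteristic $\gamma$ of $\re q\lambda$ with $q\ne 0$, there exists a minimal characteristic point $\tilde\gamma\in\gamma(I)$ or a minimal bicharacteristic interval $\tilde\gamma\subset\gamma(I)$ of $\lambda$. In the interval case, the proof of~\cite[Theorem~26.4.12]{ho4} shows that $\tilde\gamma$ has injective regular projection in $S^\ast(X)$, since it is contained in the curve $\gamma$ along which $\lambda$ fails condition \eqref{intropsi} after multiplication by $q$. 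Consequently, in both alternatives $\tilde\gamma$ satisfies the hypotheses (a) or (b) of Theorem \ref{thm:solv}, so $P$ is not solvable at the cone $\tilde K$ generated by $\tilde\gamma$.

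Finally, since $\tilde\gamma\subset\gamma(I)$, the cone $\tilde K$ is contained in the closed cone $K$ generated by $\gamma(I)$. As noted immediately before the statement, solvability at $K$ would imply solvability at any smaller closed cone, in particular at $\tilde K$; this contradicts Theorem \ref{thm:solv}. Therefore $P$ is not solvable at $K$, which is the conclusion of the corollary. The main conceptual step is the extraction of the minimal sub-object with the additional geometric property of injective regular projection in the interval case; once this is in hand, Theorem \ref{thm:solv} does all the analytic work, and the monotonicity of solvability with respect to the base cone transfers the nonsolvability back to $\gamma(I)$.
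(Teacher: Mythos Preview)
Your proposal is correct and follows essentially the same approach as the paper: the paragraph immediately preceding the corollary already sketches exactly this argument, namely extracting a minimal characteristic point or minimal bicharacteristic interval $\tilde\gamma\subset\gamma(I)$ via the discussion before Proposition~\ref{prop:minimal03}, invoking the proof of~\cite[Theorem~26.4.12]{ho4} for the injective regular projection in the interval case, applying Theorem~\ref{thm:solv}, and then using that solvability passes to smaller closed cones. Your write-up simply spells out these steps in more detail.
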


We now proceed to the main result of the paper,
generalizing~\cite[Theorem~2.19]{jw} to systems of principal type
and constant characteristics for which Theorem \ref{thm:solv} implies non-solvability.

\begin{thm}\label{thm:bigsystems}
Let $K \subset T^{\ast}(X) \smallsetminus 0$ be a compactly based cone. Let
$P\in \varPsi_{\mathrm{cl}}^m(X)$ and $Q\in \varPsi_{\mathrm{cl}}^{k}(X)$
be properly supported $N\times N$ systems of pseudodifferential operators such that
the range of $Q$ is microlocally contained in the range of $P$ at $K$, where $P$ is system of principal type
and constant characteristics near $K$.
Let $P_m$ be the homogeneous principal symbol
of $P$, and let $I=[a_0,b_0]\subset \mathbb{R}$ be a compact interval possibly reduced
to a point.
Suppose that
$\gamma:I\to T^\ast(X)\smallsetminus 0$ belongs to the characteristic set
$\varSigma(P_m)$
of $P_m$ and that $K$ contains a conic neighborhood of $\gamma(I)$. If
$\lambda(w)$ is the unique section of eigenvalues of $P_m(w)$ satisfying $\lambda\circ\gamma = 0$,
and $\gamma$ is either
\begin{enumerate}
\item[(a)] a minimal characteristic point of $\lambda(w)$,
or
\item[(b)] a minimal bicharacteristic interval of $\lambda(w)$
with injective regular projection in $S^\ast (X)$,
\end{enumerate}
then there exists an $N\times N$ system
$E\in \varPsi_{\mathrm{cl}}^{k-m}(X)$ such that the terms in the asymptotic
expansion of the symbol of $Q-PE$  
vanish of infinite order at $\gamma(I)$.
\end{thm}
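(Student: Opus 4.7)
The plan is to reduce the assertion, via a two-stage normal-form argument, to the scalar result \cite[Theorem~2.19]{jw}. First, applying Proposition \ref{prop.26.4.4} with the canonical transformation $\chi$ supplied by Definition \ref{dfn:minimal1} and conjugating $P$ and $Q$ with elliptic Fourier integral operators, I would reduce to the case $X=\R^n$ with $\gamma(I)$ the standard segment $\varGamma=\{(x_1,0,\varepsilon_n): x_1\in I\}$ and $\re \chi^\ast(q\lambda)=\xi_1$ in a conic neighborhood of $\varGamma$; this uses property (i) of Definition \ref{dfn:minimal1}. Left-composing $P$ with a suitable nonvanishing scalar factor, which preserves the microlocal range inclusion by the remarks following Definition \ref{defrangesystems}, arranges that the principal symbol of the eigenvalue $\lambda$ under consideration reads $\xi_1+i\im\tilde p$. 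Property (ii) of Definition \ref{dfn:minimal1}, together with Proposition \ref{prop:minimal03} and Theorem \ref{thm:minimal07}, then provides the sign-change and minimality structure needed later for quasi-mode constructions.

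Second, using the constant characteristics hypothesis near $K$ and an extension of the preparation lemma \cite[Lemma~4.1]{de2} from an isolated characteristic point to a conic neighborhood of the interval $\varGamma$, I would left- and right-multiply $P$ by properly supported elliptic $N\times N$ systems to obtain a block-diagonal normal form $\tilde P=\mathrm{diag}(P_0,P_1)$, where $P_0$ is a scalar operator with principal symbol $\xi_1+i\im\tilde p(x,\xi')$ and $P_1$ is an elliptic system of size $(N-1)\times(N-1)$. The transformed $\tilde Q$ still has its range microlocally contained in that of $\tilde P$. Writing the unknown $\tilde E$ and $\tilde Q$ in the corresponding $1\oplus(N-1)$ block decomposition, the block of $\tilde E$ mapping into $P_1$ is obtained by parametrix inversion, so that the corresponding block of $\tilde Q-\tilde P\tilde E$ is smoothing; the block mapping into the scalar $P_0$ reduces the problem to the scalar assertion \cite[Theorem~2.19]{jw}, applied columnwise. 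The scalar theorem is proved by inducting on the order of vanishing of the asymptotic expansion of the symbol and using the a priori estimate of Lemma \ref{lemrange1}, which applies verbatim to systems, tested against quasi-modes supported on a $\varrho$-minimal bicharacteristic provided by Theorem \ref{thm:minimal07}; each induction step forces vanishing of the next symbol term to infinite order along $\varGamma$. Assembling the blocks and undoing the normal form produces the required $E\in\varPsi_{\mathrm{cl}}^{k-m}(X)$.

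The hardest part will be the extension of the preparation lemma along the entire bicharacteristic interval $\varGamma$, rather than at a single point. For isolated characteristic points \cite[Lemma~4.1]{de2} applies directly, but along an interval one must arrange a smooth, homogeneous block reduction uniformly in a conic neighborhood of $\varGamma$. This is ultimately made possible by the constant characteristics assumption, which by the discussion following Definition \ref{def:constantchars} keeps the algebraic and geometric multiplicities of $\lambda$ locally constant, so that the spectral projection onto the generalized eigenspace associated with $\lambda$ is smooth in $w$ throughout a full conic neighborhood of $\varGamma$. Verifying this carefully, ensuring compatibility with the homogeneous structure, and producing the normal form $\mathrm{diag}(P_0,P_1)$ at the level of full symbols constitute the principal technical effort. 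A secondary but routine step is the Borel summation producing a genuine classical operator $E$ from the formal asymptotic solution of the transport-type equations generated in the induction.
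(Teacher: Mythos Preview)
Your overall strategy---conjugate to a model along $\varGamma$, block-diagonalize using the constant-characteristics hypothesis, invert the elliptic block by parametrix, and handle the characteristic block by quasi-mode arguments---matches the paper's. The extension of the preparation lemma from a point to a bicharacteristic interval is exactly Lemma~\ref{lem:prep}, and your diagnosis that the spectral projector is smooth and homogeneous along a conic neighborhood of $\varGamma$ is the right input (this is Proposition~\ref{prop:basis}).

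There is, however, a genuine gap in your reduction of the characteristic block. You write the normal form as $\operatorname{diag}(P_0,P_1)$ with $P_0$ \emph{scalar} and $P_1$ of size $(N-1)\times(N-1)$. This is only correct when the eigenvalue $\lambda$ has multiplicity one. In general the constant-characteristics hypothesis gives $\dim\kernel(P_m-\lambda\,\mathrm{Id}_N)\equiv J$ with $J\ge 1$, and the preparation yields a $J\times J$ block $P_{11}$ whose \emph{principal} symbol is $\lambda\,\mathrm{Id}_J$ but whose lower-order terms are genuinely matrix-valued and couple the $J$ components. You cannot in general conjugate these away: the argument that reduces $P_{11}$ to $\lambda(x,D)\,\mathrm{Id}_J$ to all orders (solving transport equations for the conjugating symbols) requires $\{\re\lambda,\im\lambda\}\ne 0$, which fails along a minimal bicharacteristic interval where $\im\lambda$ vanishes to infinite order. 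Consequently, ``applying the scalar theorem columnwise'' does not go through: the equation $P_{11}^\ast v=0$ for the quasi-modes is a \emph{coupled} system, and the range inclusion for the $N\times N$ system does not decompose into $J$ independent scalar range inclusions.

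The paper fixes this by proving system versions of the key vanishing results (Theorems~\ref{thm:sysmainthm1} and~\ref{thm:sysmainthm2}): the approximate solutions are vectors $V_\tau=(v_{1,\tau},\ldots,v_{J,\tau},0)$ whose amplitudes $\phi_{k,m}$ satisfy a coupled linear system of transport ODEs (see \eqref{ODE}), and one chooses the Cauchy data so that all but one component vanish to high order at the relevant point, thereby isolating a single matrix entry of the first nonzero Taylor coefficient of $\sigma(R^\ast)$. A second point you should address is the case split at a minimal characteristic point: when \eqref{eq:sysmainthm1} fails in every neighborhood, the paper passes to a sequence $\gamma_j\to\gamma$ with $\{\re\lambda,\im\lambda\}(\gamma_j)>0$, conjugates near each $\gamma_j$ to the model $D_1+ix_1D_n$, obtains operators $E_j$, and then assembles a single $E$ at $\gamma$ via the limit propositions in the appendix (Propositions~\ref{appthm26} and~\ref{appthm26elliptic}). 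Your outline does not cover this branch.
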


Note that when proving Theorem \ref{thm:bigsystems} we
may assume that $P$ and $Q$ have the same order.
In fact, let $Q_1\in\varPsi_{\mathrm{cl}}^{m-k}(X)$ be a properly supported,
elliptic $N\times N$ system. By the discussion following Definition \ref{defrangesystems}
we have that the range of $QQ_1$ is microlocally contained in the range of $P$ at $K$.
None of the other assumptions in Theorem \ref{thm:bigsystems} are affected by this composition,
so suppose that the theorem is proved for operators of the same order. Since both
$P$ and $QQ_1$ have order $m$, we can then find a system $E\in\varPsi_{\mathrm{cl}}^{0}(X)$
such that all the terms in the asymptotic expansion of the symbol of $QQ_1-PE$
vanish of infinite order at $\gamma(I)$. If $Q_1^{-1}\in\varPsi_{\mathrm{cl}}^{k-m}(X)$ is a properly supported
parametrix of $Q_1$, the calculus then gives that all the terms in the asymptotic
expansion of the symbol of
\[
(QQ_1-PE)\circ Q_1^{-1}\equiv Q-PEQ_1^{-1}\quad\text{mod }\varPsi^{-\infty}
\]
vanish of infinite order at $\gamma(I)$. Thus Theorem \ref{thm:bigsystems}
holds with $E$ replaced by $EQ_1^{-1}\in\varPsi_{\mathrm{cl}}^{k-m}(X)$.

We postpone the proof of Theorem \ref{thm:bigsystems} and instead show
that Theorem \ref{thm:bigsystems} has applications to scalar non-principal type
pseudodifferential operators. For a similar example related to
solvability, see~\cite[Theorem~2.9]{de2}. If $L$ is a properly supported
scalar operator we shall in this context let $L^m$ denote
the composition $L\circ\ldots\circ L$ with $L$ occurring $m$ times, while
$L^0$ is understood to be the identity $\I\in\varPsi_{\mathrm{cl}}^0(X)$.

\begin{thm}\label{thm:application}
Let $K \subset T^{\ast}(X) \smallsetminus 0$ be a compactly based cone. Let
$L\in\varPsi_{\mathrm{cl}}^1(X)$ be a properly supported scalar operator of principal type
near $K$, and let $A_j\in \varPsi_{\mathrm{cl}}^0(X)$ for $0\le j<N$ be
properly supported scalar operators. If $P\in\varPsi_{\mathrm{cl}}^N(X)$ is the operator
\[
Pu=L^N u+\sum_{j=0}^{N-1} A_jL^j u,
\]
let $Q\in\varPsi_{\mathrm{cl}}^k(X)$ be properly supported and assume that
the range of $Q$ is microlocally contained in the range of $P$ at $K$.
Let $w\mapsto\lambda(w)$ be the homogeneous principal symbol
of $L$, and let $I=[a_0,b_0]\subset \mathbb{R}$ be a compact interval possibly reduced
to a point. Suppose that $K$ contains a conic neighborhood of $\gamma(I)$, where
$\gamma:I\to T^\ast(X)\smallsetminus 0$ is either
\begin{enumerate}
\item[(a)] a minimal characteristic point of $\lambda(w)$,
or
\item[(b)] a minimal bicharacteristic interval of $\lambda(w)$
with injective regular projection in $S^\ast (X)$.
\end{enumerate}
Then there exists a properly supported scalar operator
$E\in \varPsi_{\mathrm{cl}}^{k-1}(X)$ such that the terms in the asymptotic sum of the symbol of $Q-PE$  
vanish of infinite order at $\gamma(I)$.
\end{thm}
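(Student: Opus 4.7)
The plan is to reduce Theorem \ref{thm:application} to Theorem \ref{thm:bigsystems} by passing to the companion first-order $N\times N$ system
\[
\tilde P=\begin{pmatrix} L & -\I & 0 & \cdots & 0 \\ 0 & L & -\I & \cdots & 0 \\ \vdots & & \ddots & \ddots & \vdots \\ 0 & \cdots & 0 & L & -\I \\ A_0 & A_1 & \cdots & A_{N-2} & L+A_{N-1} \end{pmatrix}\in\varPsi_{\mathrm{cl}}^1(X),
\]
paired with the $N\times N$ system $\tilde Q$ having $Q$ in the $(N,N)$ entry and zeros elsewhere. The principal symbol of $\tilde P$, viewed as a classical system of order $1$, is $\lambda\I_N$, so the only eigenvalue is $\lambda$ with both algebraic and geometric multiplicities identically $N$; hence $\tilde P$ has constant characteristics on $T^\ast(X)\smallsetminus 0$. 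Since $L$ is of principal type near $K$ we have $d\lambda\ne 0$ there, which makes Definition \ref{def:systemprincip} easy to check for $\tilde P$ near $K$. Moreover, the unique section of eigenvalues vanishing on $\gamma(I)$ is $\lambda$ itself, so the minimality hypotheses (a) and (b) transfer verbatim to the hypotheses of Theorem \ref{thm:bigsystems} applied to $\tilde P$.

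Next I would verify the range inclusion at the level of systems: given $\mathbf f=(f_1,\ldots,f_N)\in H^{\mathrm{loc}}_{(N_0)}(X,\C^N)$, where $N_0$ is the integer from the scalar range inclusion of $Q$ in $P$, one has $\tilde Q\mathbf f=(0,\ldots,0,Qf_N)$ with $f_N\in H^{\mathrm{loc}}_{(N_0)}(X,\C)$. Picking $u\in\De(X,\C)$ with $W\!F(Pu-Qf_N)\cap K=\emptyset$ and setting $\mathbf v=(u,Lu,\ldots,L^{N-1}u)$, a direct computation yields $\tilde P\mathbf v=(0,\ldots,0,Pu)$, whence $W\!F(\tilde P\mathbf v-\tilde Q\mathbf f)\cap K=\emptyset$. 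Thus the range of $\tilde Q$ is microlocally contained in that of $\tilde P$ at $K$.

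Theorem \ref{thm:bigsystems} then produces an $N\times N$ system $\tilde E\in\varPsi_{\mathrm{cl}}^{k-1}(X)$ such that every homogeneous term in the asymptotic expansion of the symbol of $\tilde Q-\tilde P\tilde E$ vanishes of infinite order at $\gamma(I)$. The candidate scalar operator is $E:=\tilde E_{1,N}$. To finish, I would read off the $(j,N)$ entry of $\tilde Q-\tilde P\tilde E$ for $j<N$, which equals $\tilde E_{j+1,N}-L\tilde E_{j,N}=:R_j$; each $R_j$ then has symbol with every asymptotic term vanishing of infinite order at $\gamma(I)$. An induction on $j$ gives
\[
\tilde E_{j,N}=L^{j-1}E+\sum_{i=1}^{j-1}L^{j-1-i}R_i,\qquad 1\le j\le N.
\]
Substituting this into the $(N,N)$ entry and collecting terms produces an identity of the form $(\tilde Q-\tilde P\tilde E)_{N,N}=Q-PE-R$, where $R$ is a finite sum of compositions of pseudodifferential operators with the $R_i$. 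Since the property that every term in the asymptotic expansion vanishes of infinite order at a fixed point is preserved under composition with classical pseudodifferential operators, both $(\tilde Q-\tilde P\tilde E)_{N,N}$ and $R$ enjoy this property, and hence so does $Q-PE$. Replacing $\tilde E$ by a properly supported representative modulo $\varPsi^{-\infty}$ yields the properly supported $E$ required by the statement.

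The analytic content is entirely subsumed by Theorem \ref{thm:bigsystems}; the remaining ingredients are the companion-matrix construction and the inductive back-substitution within the symbol calculus. The only place that requires care is tracking the asymptotic vanishing at $\gamma(I)$ through the algebra, but since each individual operation preserves vanishing to infinite order at a point, no genuine obstacle should appear.
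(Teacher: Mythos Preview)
Your proposal is correct and follows essentially the same approach as the paper: both pass to the companion first-order $N\times N$ system, verify the range inclusion by setting $\mathbf v=(u,Lu,\ldots,L^{N-1}u)$, apply Theorem~\ref{thm:bigsystems}, take $E$ to be the $(1,N)$ entry of the resulting system, and then recover $Q-PE$ from the $(j,N)$ entries by back-substitution. The only cosmetic difference is that the paper expands $PE$ directly via telescoping sums while you organize the same computation as an induction on $\tilde E_{j,N}$; the algebra is identical.
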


\begin{proof}
This is a standard reduction to a first order system. If $\mathscr Q$ is the $N\times N$
system given by the block form
\[
\mathscr Q=\left( \begin{array}{cc} 0 & 0\\
0 & Q \end{array} \right)\in\varPsi_{\mathrm{cl}}^k(X),
\]
then the range of $\mathscr Q$
is microlocally contained in the range of $\mathscr P$ at $K$,
where
\[
\mathscr P=\left( \begin{array}{ccccc} L & -1 & 0 & \ldots & 0 \\
0 & L & -1 & \ldots & 0 \\
\vdots & \vdots & \vdots & \ddots & \vdots \\
A_0 & A_1 & A_2 & \ldots & A_{N-1}+L
\end{array} \right)\in\varPsi_{\mathrm{cl}}^1(X).
\]
Indeed, if $N_0$ is the integer given by Definition \ref{defrangesystems},
let $f\in H_{(N_0)}^\mathrm{loc}(X,\C_N)$ be given by $f={}^t(f_1,\ldots,f_N)$. Then
we can find a scalar distribution $u\in\De (X)$ such that
$W\! F (Pu-Qf_N)\cap K=\emptyset$. Now let $v_{j+1}=L^j u$
for $0\le j < N$ and set $v={}^t(v_1,\ldots,v_N)$. Then
$\mathscr Q f={}^t(0,\ldots,0,Q f_N)$ and
$\mathscr P v={}^t(0,\ldots,0,P u)$, which proves the claim.
Since $\lambda(w)$ is the only section of eigenvalues of the principal symbol
of $\mathscr P$, we find by an application of~\cite[Proposition~2.10]{de2}
that $\mathscr P$ is a system of principal type and constant characteristics
near $K$. By Theorem \ref{thm:bigsystems} there is an $N\times N$ system
$\mathscr B=(B_{jk})\in\varPsi_{\mathrm{cl}}^{k-1}(X)$
such that the terms in the asymptotic expansion of the symbol
of $\mathscr Q-\mathscr P\circ \mathscr B$ vanish of infinite order at $\gamma(I)$.
This means that the terms in the asymptotic expansions of the symbols of
\begin{enumerate}
\item[(i)] $Q-A_0 B_{1N}-\ldots -A_{N-1} B_{NN}-LB_{NN}$,
\item[(ii)] $L B_{jN}-B_{(j+1)N},\quad 1\le j<N$,
\end{enumerate}
vanish of infinite order at $\gamma(I)$, which implies that the same
holds for $Q-P B_{1N}$. Indeed, write
\begin{align*}
P B_{1N}&=L^{N-1}(LB_{1N}-B_{2N})+\ldots+L(LB_{(N-1)N}-B_{NN})+LB_{NN}\\
&\phantom{=}+A_0B_{1N}+\sum_{j=1}^{N-1} A_j
\Big(B_{(j+1)N}+ \sum_{\ell=1}^j L^{j-\ell}(LB_{\ell N}-B_{(\ell+1)N})\Big)\\
&=LB_{NN}+\sum_{j=0}^{N-1} A_{j}B_{(j+1)N} + R,
\end{align*}
where $R$ in view of (ii) and the calculus has a symbol with an
asymptotic expansion whose terms vanish of infinite order at $\gamma(I)$.
Since
\[
Q-PB_{1N}=Q-LB_{NN}-\sum_{j=1}^N A_{j-1}B_{jN}- R,
\]
the result now follows by (i) by setting $E=B_{1N}$. This completes the proof.
\end{proof}

Keeping the notation from Theorem \ref{thm:application} and its proof,
we remark that by comparing with the scalar principal type case
we would expect the order of the operator $E\in\varPsi_\mathrm{cl}^{k-1}(X)$ to be lower.
($E$ does have the expected order when $N=1$, which is not
surprising since $P$ is of principal type then.)
Since $Q\in\varPsi_\mathrm{cl}^{k}(X)$ it follows that
if $N>1$ then the terms $\sigma_j(PE)$ in the asymptotic expansion
of the symbol of $PE$ that are homogeneous of degree $k< j \le N+k-1$ must vanish of
infinite order at $\gamma(I)$; these terms can be traced back to the operator $R$.
Even though only the principal symbol is invariantly defined a priori,
the statement has meaning in view of the symbol calculus, see~\cite[Theorem~18.1.17]{ho3}.
Since $d\lambda\ne 0$ near $\gamma(I)$, this means that
the terms in the asymptotic expansion of the symbol of $E$ that are
homogeneous of degree $k-N< \ell \le k-1$ must vanish of
infinite order at $\gamma(I)$.
(Of course, since $\gamma(I)$ has empty interior, we cannot from this
infer that $E\in\varPsi^{k-N}$ at $\gamma(I)$ in the sense
of the discussion preceding~\cite[Proposition~18.1.26]{ho3}.)
Indeed, if $w_0\in\gamma(I)$
and $\sigma_E\sim e_{k-1}+e_{k-2}+\ldots$
then the principal symbol of $PE$ is
$\sigma_{N+k-1}(PE)=\lambda^Ne_{k-1}$, so $e_{k-1}$ vanishes of infinite order at $w_0$
by Lemma \ref{appthm19special} in the appendix. If $k< j \le N+k-1$
then the only term in $\sigma_{j}(PE)$ that does not involve the functions
$e_{j-N+1},\ldots,e_{k-1}$ or their derivatives is $\lambda^Ne_{j-N}$,
so the claim follows by induction with respect to $j$ and an application of
Lemma \ref{appthm19special}.
This means that if $q$ is the principal symbol of $Q$ then
\begin{equation}\label{eq:essentialtoporder}
\partial_x^\alpha\partial_\xi^\beta(q(x,\xi)-\lambda^N(x,\xi)e_{k-N}(x,\xi))|_{(x,\xi)\in\gamma(I)}=0
\quad\text{for all }\alpha,\beta\in\N^n,
\end{equation}
since $\sigma_k(Q-PE)$ vanishes of infinite order, and
the only term in $\sigma_{k}(PE)$ that does not involve the functions
$e_{k-N+1},\ldots,e_{k-1}$ or their derivatives is $\lambda^Ne_{k-N}$.

Note also that under the hypotheses of Theorem \ref{thm:application} it
follows that $P$ is not solvable at the cone generated by $\gamma(I)$.
In the case when condition (a) holds, this is an immediate consequence
of~\cite[Theorem~2.9]{de2} in view of the discussion following
Theorem \ref{thm:solv}. If instead (b) holds, then $\mathscr P$
fails to be solvable at the cone generated by $\gamma(I)$ by
an application of Theorem \ref{thm:solv}. If $P$ is solvable
there, then the arguments in the proof of~\cite[Theorem~2.9]{de2}
can be used to arrive at a contradiction. That is, given ${}^t(f_1,\ldots,f_N)$
we set $u_1=0$, $u_2=-f_1$ and recursively $u_{j+1}=Lu_j-f_j$ for $0\le j<N$.
Then $\mathscr P \ {}^t(u_1,\ldots,u_{N-1},u_N)=(f_1,\ldots,f_{N-1},f)$,
with
\[
f=Lu_N+\sum_{j=0}^{N-1}A_ju_{j+1}=-\sum_{\ell=1}^{N-1}L^{N-\ell}f_\ell
-\sum_{j=0}^{N-1}\sum_{\ell=1}^j A_j L^{j-\ell}f_\ell.
\]
If ${}^t(f_1,\ldots,f_N)$ belongs to an appropriate local Sobolev space
determined by the definition of solvability for $P$ and the formula above,
then there is a distribution $u\in\De (X)$ such that $Pu-f-f_N$ has no
wave front set in the cone generated by $\gamma(I)$. If we put
$v_1=u$ and recursively $v_{j+1}=Lv_j$ for $1\le j<N$ then
$U={}^t(u_1,\ldots,u_N)+{}^t(v_1,\ldots,v_N)$ satisfies
$\mathscr P U={}^t(f_1,\ldots,f_N)+G$, where the wave front set of the
vector $G$ does not meet the cone generated by $\gamma(I)$, which is a contradiction.

If $P\in\varPsi_\mathrm{cl}^m(X)$ is a scalar operator
we shall, for the rest of this section only, let $\ran P$
denote the range of $P$ viewed as an operator $P:\De(X)\to\De(X)/C^\infty(X)$,
\[
\ran P=\{f\in\De(X) : f-Pu\in C^\infty(X)\text{ for some } u\in\De(X)\}.
\]
The operators $L^j$ that appear in Theorem \ref{thm:application}
enjoy the following property.

\begin{cor}\label{cor:application}
Let $L\in\varPsi_{\mathrm{cl}}^1(X)$ be a properly supported scalar operator,
and assume that the hypotheses of Theorem \ref{thm:application} hold.
Then we have the following chain of strict inclusions:
\[
\ldots \subsetneq\ran L^{k+1}\subsetneq \ran L^k\subsetneq\ldots\subsetneq \ran L\subsetneq \ran\I.
\]
In particular, if $j$ and $k$ are non-negative integers, then $\ran L^j\subset\ran L^k$ if and only if $j\ge k$.
\end{cor}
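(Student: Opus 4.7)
The inclusion $\ran L^{k+1} \subset \ran L^k$ is immediate: $f - L^{k+1} u \in C^\infty(X)$ yields $f - L^k(Lu) \in C^\infty(X)$. The plan is to establish strictness by contradiction, using Theorem \ref{thm:application} and the remark following it.

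Suppose $\ran L^k \subset \ran L^{k+1}$ for some fixed $k \ge 0$. The first step is to translate this set-level containment into the microlocal containment of Definition \ref{defrangesystems}. Since $L^k$ maps $\De(X)$ into itself, every $f \in \De(X)$ satisfies $L^k f \in \ran L^k \subset \ran L^{k+1}$, so there is $u \in \De(X)$ with $L^k f - L^{k+1} u \in C^\infty(X)$. As $C^\infty$ functions have empty wave front set, restricting to $f \in H^{\mathrm{loc}}_{(0)}(X)$ shows that the range of $Q := L^k$ is microlocally contained in the range of $P := L^{k+1}$ at $K$.

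Next I would apply Theorem \ref{thm:application} with $P = L^{k+1}$ (of the required form with $N = k+1$ and all $A_j \equiv 0$) and $Q = L^k$. This produces a properly supported scalar operator $E \in \varPsi^{k-1}_{\mathrm{cl}}(X)$ such that every term in the asymptotic expansion of the symbol of $L^k - L^{k+1} E$ vanishes of infinite order at $\gamma(I)$. Relation \eqref{eq:essentialtoporder} from the remark following Theorem \ref{thm:application}, applied with principal symbol $q = \lambda^k$ of $Q$ and with $N = k+1$ (so that $k - N = -1$), then gives
\[
\partial_x^\alpha \partial_\xi^\beta \bigl(\lambda^k - \lambda^{k+1} e_{-1}\bigr)\big|_{\gamma(I)} = 0 \quad \text{for all } \alpha, \beta \in \N^n,
\]
where $e_{-1}$ denotes the homogeneous component of degree $-1$ in the asymptotic expansion of the symbol of $E$.

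The concluding step is to factor $\lambda^k - \lambda^{k+1} e_{-1} = \lambda^k(1 - \lambda e_{-1})$. Since $\lambda = 0$ on $\gamma(I) \subset \varSigma(L)$ and $e_{-1}$ is smooth on $T^\ast(X) \smallsetminus 0$, the factor $1 - \lambda e_{-1}$ equals $1$ on $\gamma(I)$ and is invertible in a neighborhood; multiplying by its smooth reciprocal and applying the Leibniz rule forces $\lambda^k$ itself to vanish of infinite order at $\gamma(I)$. For $k = 0$ this already yields $1 = 0$. For $k \ge 1$, the principal type assumption on $L$ together with constant characteristics gives $d\lambda \ne 0$ on $\gamma(I)$ (as in the discussion following Definition \ref{def:constantchars}), so choosing a direction $\partial_\nu$ with $\partial_\nu \lambda(w_0) \ne 0$ at some $w_0 \in \gamma(I)$ yields $\partial_\nu^k(\lambda^k)(w_0) = k!\,(\partial_\nu \lambda(w_0))^k \ne 0$, the required contradiction. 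The main delicate point is step two, namely extracting the precise identity \eqref{eq:essentialtoporder} for the degree-$(-1)$ coefficient of $E$; once this is in hand, the obstruction is controlled entirely by the function $\lambda$. The final assertion of the corollary follows at once, since $j \ge k$ gives $\ran L^j \subset \ran L^k$ by iterated inclusion, while $j < k$ would contradict one of the strict inclusions just established.
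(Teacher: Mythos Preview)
Your proof is correct and follows essentially the same route as the paper: both deduce microlocal containment from the assumed inclusion, invoke Theorem~\ref{thm:application} with $P=L^{k+1}$ and $Q=L^k$, and use \eqref{eq:essentialtoporder} together with $\partial_\nu\lambda(w_0)\ne 0$ to reach a contradiction. The only cosmetic difference is that the paper computes $\partial_\nu^{k}(\lambda^k-\lambda^{k+1}e_{-1})|_{w_0}$ directly (the second term contributes nothing since $\lambda^{k+1}$ vanishes to order $k+1$), whereas you first factor out the invertible $(1-\lambda e_{-1})$ and then compute $\partial_\nu^{k}(\lambda^k)|_{w_0}$; these are equivalent one-line calculations.
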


\begin{proof}
Let $k\ge 0$. If $f\in\ran L^{k+1}$, let $u\in\De(X)$
satisfy $f-L^{k+1}u\in C^\infty$.
Since $L$ is continuous $L:\De(X)\to\De(X)$,
we have $v=Lu\in\De(X)$. Now $f-L^kv=f-L^{k+1} u\in C^\infty$,
so $f\in\ran L^k$.

Conversely, assume to reach a contradiction that $\ran L^k\subset\ran L^{k+1}$, and let $K$ be
the cone given by Theorem \ref{thm:application} containing
a minimal bicharacteristic $\gamma(I)$ of the principal symbol $\lambda(w)$
of $L$. It is clear that if $\ran L^k\subset\ran L^{k+1}$ then the range of $L^k$ is microlocally contained
in the range of $L^{k+1}$ at $K$, so by an application of the theorem
with $P=L^{k+1}$ and $Q=L^k$ we obtain an operator
$E\in\varPsi_{\mathrm{cl}}^{k-1}(X)$ with symbol $e\sim e_{k-1}+e_{k-2}+\ldots$
such that, in particular, the term $\sigma_k(Q-PE)$ in the asymptotic expansion
of the symbol of $Q-PE$ that is homogeneous of degree $k$ vanishes of
infinite order at $w_0\in\gamma(I)$. Since $\lambda(w)$ is assumed
to be of principal type near $K$ there is a tangent vector
$\partial_\nu\in T_{w_0} (T^\ast(X)\smallsetminus 0)$ such that
$\partial_\nu\lambda(w_0)=\langle \partial_\nu,d\lambda\rangle\ne 0$.
In view of \eqref{eq:essentialtoporder} this implies that
\[
0=\partial_\nu^{k}(\lambda^k-\lambda^{k+1}e_{-1})|_{w_0}= k! (\partial_\nu\lambda(w_0))^k\ne 0,
\]
a contradiction. If $k=0$ this is to be interpreted as
$0=1-(\lambda(w_0))e_{-1}(w_0)=1$, which also gives a contradiction.
\end{proof}

Of course, we already know that
$\ran L^j\subset\ran L$ implies that $j>0$ under the
hypotheses of Theorem \ref{thm:application}. Indeed,
in view of Definition \ref{dfn:minimal1}
it follows by~\cite[Theorem~26.4.7$'$]{ho4} together
with~\cite[Proposition~26.4.4]{ho4} that $L$
fails to be solvable at the cone generated by $\gamma(I)$.
In view of the discussion following Definition \ref{defrangesystems},
the range of the identity is therefore not microlocally contained
in the range of $L$ at this cone, which shows that the inclusion
$\ran \I\subset\ran L$ cannot hold.

\section{Preparation}
\label{preparation}

The purpose of this section is to prove a preparation result that will be used
when proving Theorem \ref{thm:bigsystems}. We first discuss
when the kernel of a matrix valued function is a complex vector
bundle.

Let $X$ be a $C^\infty$ manifold and $P(w)$ an $N\times N$ system varying smoothly with $w\in X$,
and suppose that there is a unique section of eigenvalues
$\lambda(w)$ of $P(w)$ vanishing
along a compact and smooth simple curve
$\gamma\subset\varSigma(P)$,
where $\lambda(w)$ has constant multiplicity $J$ in a neighborhood.
Since the eigenvalues of $P(w)$ depend continuously on $w\in X$, it follows that
there exists a neighborhood $Y$ of $\gamma$ and a small constant $c>0$ such that the operator
valued function
\begin{equation*}
w\mapsto\varPi(w)=\frac{1}{2\pi i}\int_{|z|=c}(z\mathrm{Id}_N-P(w))^{-1} \, dz \in C^\infty(Y)
\end{equation*}
is the projection onto the generalized eigenvectors for the eigenvalue $\lambda(w)$ of $P(w)$
(see for example~\cite[pp.~40--45]{kato}). The dimension of the algebraic eigenspace
$\ran\varPi(w)$ equals the algebraic multiplicity of the eigenvalue $\lambda(w)$.
(We could of course use the existence of the projection to give an alternative proof
of Corollary \ref{cor:algmult}.)
Assuming also that $\dim\kernel (P(w)-\lambda(w)\mathrm{Id}_N)\equiv J$ in $Y$
it follows that $\ran\varPi(w)=\kernel (P(w)-\lambda(w)\mathrm{Id}_N)$.
Note that if $w$ is fixed then the operator $\varPi(w)$ is idempotent and
we have the direct sum
\begin{equation}\label{directsum}
\C^N=\ran\varPi(w)\oplus\ran(\mathrm{Id}_N-\varPi(w)).
\end{equation}

Let $V$ be the topological manifold
\[
V=\{(w,z):w\in Y,\ z\in\kernel(P(w)-\lambda(w)\mathrm{Id}_N)\},
\]
and let $\pi:(w,z)\mapsto w$ be the projection. Then $V$ can by means of $\varPi$
be given the structure of a $C^\infty$
complex vector bundle over $Y$. Indeed, it is clear that each fiber
$V_w=\pi^{-1}(w)=\kernel(P(w)-\lambda(w)\mathrm{Id}_N)$ over $w$
has a natural vector space structure induced from the one on $\C^N$.
Since $Y$ is open and $w\mapsto \varPi(w)$ is smooth, we can for each
$w_\alpha\in Y$ find a neighborhood $U_\alpha\subset Y$
of $w_\alpha$ such that
\begin{equation*}
w\in U_\alpha \quad
\Longrightarrow \quad \|\varPi(w)-\varPi(w_\alpha)\|<1.
\end{equation*}
Choose orthonormal
bases $\{e_{\alpha,1},\ldots,e_{\alpha,J}\}$ and $\{e_{\alpha,J+1},\ldots,e_{\alpha,N}\}$ of
$\ran\varPi(w_\alpha)$
and $\kernel\varPi(w_\alpha)=\ran(\mathrm{Id}_N-\varPi(w_\alpha))$, respectively,
so that $\varPi(w_\alpha)e_{\alpha,k}=e_{\alpha,k}$ for $1\le k\le J$ and $0$ otherwise.
It is then easy to see that the $C^\infty$ sections
\begin{equation}\label{localframe}
U_\alpha\ni w\mapsto f_{\alpha,k}(w)=\varPi(w)e_{\alpha,k},\quad 1\le k\le J,
\end{equation}
are linearly independent and therefore constitute a basis for each fiber $V_w$ over
$U_\alpha$.
(Note that the $C^\infty$ sections
\[
U_\alpha\ni w\mapsto f_{\alpha,k}(w)=(\mathrm{Id}_N-\varPi(w))e_{\alpha,k},\quad J+1\le k\le N,
\]
are also linearly independent.)
This allows for the construction of the required
local isomorphism $\psi_\alpha$ from $\pi^{-1}(U_\alpha)$ onto $U_\alpha\times\C^J$.
Hence $V$ is a $C^\infty$ complex vector bundle of fiber dimension $J$,
and \eqref{localframe} is a local frame for $V$
over $U_\alpha$.
In fact, if $w\in U_\alpha\cap U_\beta$, then
the columns of the transition matrix
\[
g_{\alpha\beta}=\psi_\alpha\circ\psi_\beta^{-1}:U_\alpha\cap U_\beta\to GL(N,\C)
\]
are just the coordinates of the local frame over $U_\beta$ in terms of the
local frame over $U_\alpha$. Since the local frames consist of $C^\infty$
sections, this implies that $g_{\alpha\beta}\in C^\infty(U_\alpha\cap U_\beta)$.
The same arguments show that
the complimentary manifold
\[
V'=\{(w,z): w\in Y,\ z\in\kernel\varPi(w)\}
\]
is a $C^\infty$ complex vector bundle over $Y$
with fiber dimension $N-J$.

We shall need the fact that $V$ and $V'$ are 
trivial
in the cases under consideration in Section \ref{sec:mainresultsforsystems}.

\begin{prop}\label{prop:basis}
Let $P\in\varPsi_\mathrm{cl}^m(X)$ be an $N\times N$ system
with homogeneous principal symbol $P_m$.
Let $\gamma$ be a compact and smooth simple curve contained
in the characteristic set $\varSigma(P_m)$ of $P_m$,
and suppose that $P$ is of principal type with constant characteristics near $\gamma$.
Let $w\mapsto\lambda(w)$ be the unique section of eigenvalues of $P_m(w)$
vanishing along $\gamma$, and suppose that $\gamma$ is a
one dimensional bicharacteristic of $\lambda$
with injective regular projection in $S^\ast(X)$.
Then there exists a conic neighborhood $\varOmega$ of $\gamma$
and a positive number $J$ such that
\begin{equation}\label{defvectorbundleV}
V=\{(w,z): w\in\varOmega,\ z\in\kernel (P_m(w)-\lambda(w)\mathrm{Id}_N)\}
\end{equation}
is a $C^\infty$ complex vector bundle over $\varOmega$
with fiber dimension $J$,
where the fiber $V_w$ over $w\in\varOmega$ is given by $V_w=\kernel (P_m(w)-\lambda(w)\mathrm Id_N)$.
Moreover, there is a local frame $\{z_1,\ldots,z_J\}$ for $V$ over $\varOmega$ such that
\[
z_k:\varOmega\ni w\mapsto z_k(w)\in V_w,\quad 1\le k\le J,
\]
is homogeneous of degree $0$ and an eigenvector of
$P_m$ with eigenvalue $\lambda$. Thus $V$ is trivial.
This local frame can be completed to a local frame for the
the trivial vector bundle $F=\varOmega\times\C^N$.
\end{prop}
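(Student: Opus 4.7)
The plan is to first construct $V$ as a $C^\infty$ complex vector bundle over a (non-conic) neighborhood $Y_0$ of $\gamma$ via the spectral projection argument already sketched in the text preceding the proposition, then to extend conically using the homogeneity of $P_m$ and $\lambda$, and finally to trivialize using the simple topology of $\gamma$.

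By Definition \ref{def:constantchars} one can choose $Y_0$ so that the section $\lambda(w)$ is a uniquely defined $C^\infty$ function on $Y_0$ with constant algebraic multiplicity; combined with the assumption that $P$ is of principal type, the geometric multiplicity is also constant on $Y_0$, in view of~\cite[Proposition 2.10]{de2}. Let $J$ be this common value. For sufficiently small $c>0$, the Dunford--Riesz projection
\[
\varPi(w)=\frac{1}{2\pi i}\int_{|z|=c}(z\I_N-P_m(w))^{-1}\,dz
\]
is then smooth on $Y_0$ with range equal to $V_w=\kernel(P_m(w)-\lambda(w)\I_N)$, making $V$ into a $C^\infty$ complex vector bundle of rank $J$ over $Y_0$ with local frames given by \eqref{localframe}. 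To obtain a conic neighborhood, I would use Proposition \ref{prop:homogeneouseigenvalues}, which gives $\lambda\circ M_t=t^m\lambda$, whence
\[
P_m(M_t(w))-\lambda(M_t(w))\I_N=t^m(P_m(w)-\lambda(w)\I_N)
\]
has the same kernel as $P_m(w)-\lambda(w)\I_N$. Thus $\varOmega=\{M_t(w):t>0,\,w\in Y_0\}$ is a conic neighborhood of $\gamma$ over which $V$ extends as a smooth bundle with fibers $V_w$, and any smooth section on $Y_0$ extends to a degree-$0$ homogeneous section on $\varOmega$ by the rule $z(M_t(w))=z(w)$.

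For triviality, observe that $\gamma$, being a compact one-dimensional bicharacteristic interval (or characteristic point) with injective regular projection in $S^\ast(X)$, is homeomorphic to a closed interval (or a point), hence contractible. After shrinking $Y_0$ if necessary, we may arrange that $\varOmega$ deformation retracts onto $\gamma$ (via radial retraction to $S^\ast(X)$ followed by a retraction of a tubular neighborhood in $S^\ast(X)$ onto the image of $\gamma$), so $\varOmega$ is itself contractible. Since smooth complex vector bundles over contractible paracompact bases are trivial, $V$ admits a global smooth frame, which may then be extended to a degree-$0$ homogeneous global frame $\{z_1,\ldots,z_J\}$ as described above. Applying the same construction to the complementary projection $\I_N-\varPi$ produces a homogeneous frame $\{z_{J+1},\ldots,z_N\}$ for the complementary bundle $V'$, and the direct sum decomposition \eqref{directsum} shows that their union forms a frame for $F=\varOmega\times\C^N$. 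The main obstacle is precisely the triviality step: although the bundle structure on $V$ and its conic extension follow routinely from the spectral projection and homogeneity, globalizing the local frames \eqref{localframe} over all of $\varOmega$ genuinely uses that $\gamma$ has injective regular projection in $S^\ast(X)$ (without this assumption the image of $\gamma$ could be a circle and $V$ could carry nontrivial monodromy).
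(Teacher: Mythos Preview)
Your argument is correct and follows essentially the same route as the paper. The only difference is one of packaging: the paper passes to the cosphere bundle at the outset by writing $P_m(x,\xi)=|\xi|^m\pi^\ast p_s$ and $\lambda=|\xi|^m\pi^\ast\varrho_s$, so that the bundle construction, the contractibility argument, and the trivialization all take place on a neighborhood $\mathcal V\subset S^\ast(X)$ of the projected curve, after which the frame is pulled back by $\pi$ to obtain homogeneity of degree $0$; you instead build $V$ over a non-conic neighborhood $Y_0$, extend conically via the scaling invariance of $\kernel(P_m-\lambda\I_N)$, and only invoke the radial retraction when arguing that $\varOmega$ is contractible. One small point of care: your extension rule $z(M_t(w))=z(w)$ for sections is not well defined as stated, since a ray may meet $Y_0$ in an interval; what you actually want (and what your deformation-retraction argument already supplies) is to restrict the global frame produced by triviality to the cross-section $\varOmega\cap S^\ast(X)$ and then extend by homogeneity---which is exactly the paper's pull-back by $\pi$.
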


\begin{proof}
By assumption $P_m$ has constant characteristics, so the characteristic equation
\[
|P_m(w)-\lambda\mathrm{Id}_N|=0
\]
has the unique local solution $\lambda(w)\in C^\infty$ of multiplicity $J>0$,
where $\lambda(w)$ is the section of eigenvalues given in the statement of
the proposition. Since $P_m$ is of principal type, the geometric
multiplicity $\dim\kernel (P_m(w)-\lambda(w)\mathrm{Id}_N)\equiv J$
in a neighborhood of $\gamma$ by~\cite[Proposition~2.10]{de2}.
If $\pi:T^\ast(X)\smallsetminus 0\to S^\ast(X)$ is the projection,
it follows by homogeneity that we can find a neighborhood $\mathcal V\subset S^\ast(X)$ of
$\pi\circ\gamma$ such that
this still holds in
the conic set $\pi^{-1}(\mathcal V)\subset T^\ast(X)\smallsetminus 0$.

By introducing a Riemannian metric on $X$
defining the unit cotangent bundle,
we can as in the proof of Proposition \ref{prop:homogeneouseigenvalues}
write
$P_m(x,\xi)=|\xi|^m\pi^\ast p_s(x,\xi)$ and $\lambda(x,\xi)=|\xi|^m\pi^\ast\varrho_s(x,\xi)$
where $p_s$ and $\varrho_s$ are functions in $C^\infty(S^\ast(X))$ with values in
$\mathcal L_N$ and $\C$, respectively.
In the neighborhood $\mathcal V$ of $\pi\circ\gamma$ it follows by homogeneity
that $\varrho_s$ is the unique section of eigenvalues of $p_s$ that vanishes along $\pi\circ\gamma$.
In particular, $\pi\circ\gamma\subset\varSigma(p_s)$.
With $v=\pi(w)$ for $w=(x,\xi)\in \pi^{-1}(\mathcal V)$ it is also easy to see
that
\[
\kernel (P_m(w)-\lambda(w)\mathrm{Id}_N)=\kernel (p_s(v)-\varrho_s(v)\mathrm{Id}_N).
\]
Thus, $\dim\kernel(p_s(v)-\varrho_s(v)\mathrm{Id}_N)\equiv J$ for $v\in\mathcal V$. By the
discussion preceding the proposition it then follows
that
\[
\{(v,z): v\in\mathcal V,\ z\in\kernel(p_s(v)-\varrho_s(v)\mathrm{Id}_N)\}
\]
is a $C^\infty$ complex vector bundle over $\mathcal V$.
With the notation
of the proposition it is clear that the pullback by
$\pi$ of the local frames constructed above yield local frames for
$V$ over open conic subsets of $T^\ast(X)\smallsetminus 0$
whose union forms a conic neighborhood of $\gamma$,
which proves the first part of the proposition.
Similarly, if we can find $C^\infty$ sections $v\mapsto z_k(v)$, $1\le k\le J$,
constituting a basis for $\kernel (p_s(v)-\varrho_s(v)\mathrm{Id}_N)$
for every $v\in\mathcal V$, then the collection $\{\pi^\ast z_k\}_{k=1}^J$ is a
local frame for $V$ over $\pi^{-1}(\mathcal V)$
and the $C^\infty$ sections $\pi^\ast z_k$ are
homogeneous of degree $0$.
In particular, if $p_s(v)z_j(v)=\varrho_s(v)z_j(v)$ for $v=\pi(w)\in\mathcal V$ then
\[
P_m(w)\pi^\ast z_j(w)=\lambda(w)\pi^\ast z_j(w).
\]
If the local frame $\{z_1,\ldots,z_J\}$ can be extended
to a local frame for the trivial complex vector bundle
$\mathcal V\times\C^N$,
then the collection
$\{\pi^\ast z_j\}_{j=1}^N$ has the required properties,
thereby proving the proposition.
Since $\gamma$ has injective regular projection in $S^\ast(X)$,
we can thus assume that $\gamma$ is a curve on the cosphere bundle
to begin with, while $P_m(w)$ and $\lambda(w)$ belong to $C^\infty(S^\ast(X))$.
Thus \eqref{defvectorbundleV} holds with $\varOmega$ replaced by $\mathcal V$.
Since $\gamma$ is contractible by assumption, we find
(after possibly shrinking $\mathcal V$
if necessary) that $V$ is 
trivial,
see for example Corollary $4.8$ in~\cite[Chapter~3]{hu}.
If
\[
V'=\{(w,z):w\in\mathcal V,\ z\in\kernel\varPi(w)\}
\]
is the complimentary vector bundle over $\mathcal V$, the
same reasoning shows that $V'$ is 
trivial.
Since the operator valued function $w\mapsto\mathrm{Id}_N-\varPi(w)$
is the projection onto the null space $V_w'$ of $\varPi(w)$,
we have $\C^N=V_w\oplus V_w'$ for any $w\in\mathcal V$
by \eqref{directsum},
so together the local frames for $V$ and $V'$ over $\mathcal V$ give
a local frame for the trivial vector bundle $F=\mathcal V\times\C^N$ over $\mathcal V$.
This completes the proof.
\end{proof}

We now prove that
the local preparation result for systems given by Lemma $4.1$ in~\cite{de2}
can be generalized to a neighborhood of a compact one dimensional bicharacteristic interval.

\begin{lem}\label{lem:prep}
Let $P\in\varPsi_\mathrm{cl}^m(X)$ be an $N\times N$ system
with principal symbol $P_m$.
Let $\gamma$ be a compact and smooth simple curve contained
in the characteristic set $\varSigma(P_m)$ of $P_m$,
and suppose that $P$ is of principal type with constant characteristics
near $\gamma$.
Let $\lambda(w)$ be the unique section of eigenvalues of $P_m(w)$
vanishing along $\gamma$, and suppose that $\gamma$ is a
one dimensional bicharacteristic of $\lambda$
with injective regular projection in $S^\ast(X)$.
Then one can find $N\times N$ systems $A$ and
$B$ in $\varPsi_\mathrm{cl}^0(X)$, non-characteristic in
a conic neighborhood of $\gamma$, such that
\begin{equation}\label{1stnormalforsystems}
APB=\left( \begin{array}{cc} \tilde{P}_{11} & 0\\
0 & \tilde{P}_{22} \end{array} \right)\in\varPsi_{\mathrm{cl}}^m(X)
\end{equation}
microlocally near $\gamma$. Moreover, $\tilde{P}_{22}$ is elliptic,
and we have $\sigma(\tilde{P}_{11})=\lambda\mathrm{Id}_J$
where the section of eigenvalues $\lambda(w)\in C^\infty$ of $P(w)$
is of principal type near $\gamma$.
\end{lem}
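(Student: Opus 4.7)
The plan is to reduce everything to a symbolic block diagonalization along $\gamma$ and then lift the result to the operator level by an iterative Borel summation, following the pattern of \cite[Lemma~4.1]{de2} but with the local frame replaced by a global one over a neighborhood of the bicharacteristic.

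First, apply Proposition~\ref{prop:basis} to obtain a conic neighborhood $\varOmega$ of $\gamma$ and a homogeneous $C^\infty$ local frame $\{z_1,\ldots,z_N\}$ for the trivial bundle $\varOmega\times\C^N$, with $\{z_1,\ldots,z_J\}$ a frame for $V=\kernel(P_m-\lambda\I_N)$ and $\{z_{J+1},\ldots,z_N\}$ a frame for a smooth complement. Let $b_0(w)$ be the $N\times N$ matrix whose columns are $z_1(w),\ldots,z_N(w)$; then $b_0$ is homogeneous of degree $0$ and invertible on $\varOmega$. Since $P_mz_k=\lambda z_k$ for $1\le k\le J$, the subspace $V_w$ is invariant under $P_m(w)$, so in the frame $b_0$ the principal symbol takes the block upper triangular form
\[
b_0^{-1}P_mb_0=\left(\begin{array}{cc}\lambda\I_J & \tilde b \\ 0 & \tilde d\end{array}\right),
\]
with $\tilde b$ and $\tilde d$ smooth and homogeneous of degree $m$. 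Because $P$ is of principal type with constant characteristics, \cite[Proposition~2.10]{de2} gives that the algebraic and geometric multiplicities of $\lambda$ coincide, both equal to $J$, and reading off the characteristic polynomial $|P_m-\mu\I_N|=(\lambda-\mu)^J|\tilde d-\mu\I_{N-J}|$ one concludes that $\tilde d-\lambda\I_{N-J}$ is invertible on $\varOmega$. In particular $\tilde d$ is invertible on $\gamma$, hence on all of $\varOmega$ after possibly shrinking.

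Next, block diagonalize at the principal symbol level: set $m_0=\left(\begin{array}{cc}\I_J & e \\ 0 & \I_{N-J}\end{array}\right)$ with $e=\tilde b(\tilde d-\lambda\I_{N-J})^{-1}$, and verify by direct computation that $m_0^{-1}b_0^{-1}P_mb_0m_0=\left(\begin{array}{cc}\lambda\I_J & 0 \\ 0 & \tilde d\end{array}\right)$. Choose properly supported systems $A^{(0)},B^{(0)}\in\varPsi_{\mathrm{cl}}^0(X)$ with principal symbols $(b_0m_0)^{-1}$ and $b_0m_0$, non-characteristic in a conic neighborhood of $\gamma$; then $A^{(0)}PB^{(0)}$ has the required block diagonal form at the top order. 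To eliminate the off-diagonal blocks at lower orders, iterate: at step $k\ge 1$, writing the current order $m-k$ off-diagonal blocks of $APB$ as $q_{12}^{(k)}$ and $q_{21}^{(k)}$, modify $A\mapsto(\I_N+E_k)A$ and $B\mapsto B(\I_N+F_k)$ where $E_k,F_k$ are off-diagonal of order $-k$ whose principal symbols satisfy the leading order symbolic equations $e_k^{12}\tilde d=-q_{12}^{(k)}$ and $\tilde df_k^{21}=-q_{21}^{(k)}$; both are solvable thanks to the invertibility of $\tilde d$. Asymptotically summing these corrections via a Borel construction yields $E\sim\sum_k E_k$, $F\sim\sum_k F_k$ in $\varPsi_{\mathrm{cl}}^{-1}(X)$, and setting $A=(\I_N+E)A^{(0)}$, $B=B^{(0)}(\I_N+F)$ produces $APB$ in the required block diagonal form modulo $\varPsi^{-\infty}$, i.e., microlocally near $\gamma$.

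Finally, $\tilde P_{22}$ is elliptic since its principal symbol $\tilde d$ is invertible on $\varOmega$, and $\lambda$ is of principal type near $\gamma$ because $\gamma$ being a one dimensional bicharacteristic of $\lambda$ with injective regular projection in $S^\ast(X)$ excludes the radial direction for $H_\lambda$ along $\gamma$. The main obstacle is the passage from a pointwise construction to one valid in a neighborhood of the full interval $\gamma(I)$: this requires the global triviality of the eigenspace bundle $V$ along $\gamma$, which is precisely the content of Proposition~\ref{prop:basis} under the injective regular projection hypothesis. Once the frame is in place, the rest is a routine iterated symbol calculus argument whose solvability at each order is ensured by the invertibility of $\tilde d-\lambda\I_{N-J}$, itself a consequence of the coincidence of algebraic and geometric multiplicities for systems of principal type with constant characteristics.
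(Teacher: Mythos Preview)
Your proof is correct and follows essentially the same route as the paper: both invoke Proposition~\ref{prop:basis} to obtain a global frame over a conic neighborhood of $\gamma$, put the principal symbol in block upper triangular form with $\lambda\I_J$ in the upper left corner, and then block diagonalize by an iterative symbol argument borrowed from \cite[Lemma~4.1]{de2}. The paper additionally orthogonalizes the frame so that the first conjugation is by a unitary system $E$ (with $A',B'$ having principal symbols $E^\ast,E$), whereas you work directly with the invertible matrix $b_0$ of frame vectors and then explicitly clear the off-diagonal $\tilde b$ at top order via $m_0$; this is a cosmetic difference and both versions succeed for the same reason, namely the invertibility of $\tilde d-\lambda\I_{N-J}$ forced by the equality of algebraic and geometric multiplicities from \cite[Proposition~2.10]{de2}. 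Your treatment of the lower order iteration and the principal type conclusion for $\lambda$ matches the paper's, which simply refers back to the end of the proof of \cite[Lemma~4.1]{de2} at that point.
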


\begin{proof}
First we note, as in the beginning of the proof of Proposition \ref{prop:basis},
that since $P_m$ is of principal type with constant characteristics, the geometric
multiplicity $\dim\kernel (P_m(w)-\lambda(w)\mathrm{Id}_N)\equiv J>0$
in a conic neighborhood of $\gamma$, where $\lambda(w)\in C^\infty$ is
the section of eigenvalues of multiplicity $J$ given in the statement of
the lemma.
Moreover,~\cite[Proposition~2.10]{de2}
also gives that $d\lambda\ne 0$ on $\gamma$, and since $\gamma$
is a one dimensional bicharacteristic of $\lambda$ with injective projection in $S^\ast(X)$ it follows that
the composition of $\gamma$ and the Hamilton vector field $H_\lambda$ of $\lambda$ does not have the radial direction.
Indeed, as in the proof of~\cite[Theorem~26.4.12]{ho4} we can
for a suitably normalized parametrization $t\mapsto\gamma(t)$ of $\gamma$ find
a $C^\infty$ function $\varrho$, homogeneous of degree $0$, such that
\begin{equation*}
0\ne \gamma'(t)=\varrho(\gamma(t)) H_\lambda(\gamma(t))=H_{\re \varrho\lambda}\circ\gamma(t).
\end{equation*}
In particular, since $\lambda\circ\gamma=0$ we find that
$\gamma$ is a bicharacteristic of the homogeneous function
$\re \varrho\lambda$ such that $H_{\re \varrho\lambda}\ne 0$ along $\gamma$.
Thus, if $H_\lambda$ and the radial vector field are
linearly dependent at some point on $\gamma$,
then $\gamma$ would just be a ray in the radial direction
which is a contradiction since $\gamma$ is assumed to have injective projection
in $S^\ast(X)$.
By homogeneity $\lambda$ is then of principal type in a conic neighborhood of $\gamma$.

Now use Proposition \ref{prop:basis} and an orthogonalization procedure to
obtain a unitary $N\times N$ system $E$, homogeneous of degree $0$
and non-characteristic in a conic neighborhood of $\gamma$, such that
\[
E^\ast P_mE=\left( \begin{array}{cc} \lambda(w)\mathrm{Id}_J & P_{12}\\
0 & P_{22} \end{array} \right)=\tilde{P}_m
\]
is the principal symbol of $A'PB'$ for any systems $A',B'\in\varPsi_{\mathrm{cl}}^0(X)$ having
principal symbols $E^\ast$ and $E$, respectively.
Here $E^\ast=E^{-1}$ is the Hermitian adjoint of $E$.
Inspecting the end of the proof of~\cite[Lemma~4.1]{de2} we find that
the result now follows by essentially repeating the arguments found there.
We omit the details.
\end{proof}

\section{The proof of Theorem \ref{thm:bigsystems}}
\label{proofrange}

Recall that we may assume that the systems $P$ and $Q$ given
by Theorem \ref{thm:bigsystems} have the same order.
Now note that if $A$ and $B$ are the elliptic systems
given by Lemma \ref{lem:prep}, 
it follows as a special case of Proposition \ref{prop.26.4.4} that
the range of $Q$ is microlocally contained in the range of $P$ at $K$
if and only if the range of
$AQB$ is microlocally contained in the range of $APB$ at $K$.
Since $A$ and $B$ are elliptic, it is easy to see using the calculus
that all the terms in the asymptotic expansion of the symbol of $Q$
have vanishing Taylor coefficients if and only if the same holds for
$AQB$. Note also that when $\gamma$ is a minimal characteristic point,
the normal form given by Lemma \ref{lem:prep}
is still valid near $\gamma$ in view of~\cite[Lemma~4.1]{de2}.
We can thus reduce the proof of Theorem \ref{thm:bigsystems} to the case when $P$ has the form given by 
\eqref{1stnormalforsystems} and $\lambda=\lambda(w)$
is the unique eigenvalue of the principal symbol $P_m$ of $P$ satisfying $\lambda\circ\gamma = 0$.
In view of Lemma \ref{sys:symbolvanishafterconjugation} in the appendix,
we can use Proposition \ref{prop.26.4.4} together with~\cite[Theorem~21.3.6]{ho3}
or~\cite[Theorem~26.4.13]{ho4}
when $\gamma$ is a characteristic point
or a one dimensional bicharacteristic, respectively,
to further reduce the proof to the case $Q, P\in\varPsi_\mathrm{cl}^1(\mathbb{R}^n)$,
$\gamma(x_1)=(x_1,0,\varepsilon_n)\in T^\ast(\mathbb{R}^n)$ for $x_1\in I$, and
\begin{equation}\label{eq:sysnormal1}
\lambda(x,\xi)=\xi_1+if(x,\xi')
\end{equation}
where $f$ is real valued, homogeneous of degree $1$ and independent of $\xi_1$.
Note that under these hypotheses, $\gamma$ is still a minimal characteristic point
or a minimal bicharacteristic interval of $\lambda$.
Thus, in any neighborhood of $\gamma$ one can find an interval in the $x_1$ direction where $f$
changes sign from $-$ to $+$
for increasing $x_1$.
If $\gamma$ is not reduced to a point then
$f$ vanishes of infinite order on $\gamma$ by Proposition \ref{prop:minimal03},
and by Theorem \ref{thm:minimal07} we can find a sequence
$\{\varGamma_j\}_{j=1}^\infty$ of $\varrho_j$-minimal
bicharacteristic intervals such that $\varrho_j\to 0$ and $\varGamma_j\to\gamma$ as
$j\to\infty$.
Note also that we still have
\begin{equation}\label{eq:1stnormalforsystems1}
P=\left( \begin{array}{cc} P_{11} & 0\\
0 & P_{22} \end{array} \right)
\end{equation}
where $\sigma(P_{11})=\lambda\mathrm{Id}_J$
and $P_{22}$ is elliptic microlocally near $\gamma(I)$.

Let 
\begin{equation}\label{1stnormalforsystems1}
Q=\left( \begin{array}{cc} Q_{11} & Q_{12}\\
Q_{21} & Q_{22} \end{array} \right)
\end{equation}
be the block form of $Q$ corresponding to \eqref{eq:1stnormalforsystems1},
so that for example $Q_{12}$ is a $J\times (N-J)$ system.
If $P_{22}^{-1}$ is a microlocal parametrix of $P_{22}$ near $\gamma(I)$, then
\begin{equation*}
Q=P\cdot\left( \begin{array}{cc} 0 & 0\\
P_{22}^{-1}Q_{21} & P_{22}^{-1}Q_{22} \end{array} \right)
+ \left( \begin{array}{cc} Q_{11} & Q_{12}\\
0 & 0 \end{array} \right)
\end{equation*}
mod $\varPsi^{-\infty}$ microlocally near $\gamma(I)$.
We may assume that $J>0$ since otherwise $P$ is elliptic.
Now let
\begin{equation}\label{psblockQ11}
\sigma_{Q_{11}}=q_1+q_0+\ldots
\end{equation}
be the total symbol of $Q_{11}$,
where $q_j$ is a $J\times J$ system, homogeneous of degree $j$. With $\lambda$
given by \eqref{eq:sysnormal1} we have $\sigma(P_{11})=0$ and $|\partial_{\xi_1}\sigma(P_{11})|\ne 0$
at $\gamma(I)$, so in place of the Malgrange preparation theorem we can use
(the transpose of)~\cite[Theorem~A.4]{de0} to obtain
\[
\sigma(Q_{11})(x,\xi)=\lambda(x,\xi)\mathrm{Id}_J E_0(x,\xi)+R_1(x,\xi')
\]
in a neighborhood of $\gamma(I)$
for some matrix valued smooth functions $E_0$ and $R_1$, where $R_1$
is independent of $\xi_1$. (Of course, since $\sigma(P_{11})=\lambda\I_J$,
the usual scalar Malgrange preparation theorem is actually sufficient.)
Restricting to $\xi=1$ and
extending by homogeneity we can make $E_0$ and $R_1$ homogeneous of degree
$0$ and $1$, respectively. We can repeat the argument for
lower order terms and obtain $Q_{11}=P_{11}\circ E_{11}+R_{11}(x,D_{x'})$
where $E_{11}\in \varPsi_\mathrm{cl}^0(\mathbb{R}^n)$ and
$R_{11}\in \varPsi_\mathrm{cl}^1(\mathbb{R}^n)$ are $J\times J$ systems,
and the symbol of $R_{11}$ is independent of $\xi_1$.
Doing the same for $Q_{12}$ we get
\begin{equation*}
Q=P\cdot\left( \begin{array}{cc} E_{11} & E_{12}\\
P_{22}^{-1}Q_{21} & P_{22}^{-1}Q_{22} \end{array} \right)
+ \left( \begin{array}{cc} R_{11} & R_{12}\\
0 & 0 \end{array} \right)
\end{equation*}
mod $\varPsi^{-\infty}$ microlocally near $\gamma(I)$. One easily checks that
the range of
\begin{equation}\label{1stnormalforsystems3}
R(x,D_{x'})=\left( \begin{array}{cc} R_{11}(x,D_{x'}) & R_{12}(x,D_{x'})\\
0 & 0 \end{array} \right)
\end{equation}
is microlocally contained in the range of $P$ near $\gamma(I)$.
Hence Theorem \ref{thm:bigsystems} follows
if we show that all the terms in the asymptotic expansion of the symbol
of $R$ have vanishing Taylor coefficients at
$\gamma=\{(x_1,0,\varepsilon_n):x_1\in I\}$.
Note that when proving this we may assume that
the lower order terms in the symbol of $P_{11}$ are independent of $\xi_1$.
In fact, if $\sigma_{P_{11}}=\lambda\I_J+p_0+\ldots$
then~\cite[Theorem~A.4]{de0} implies that
\[
p_0(x,\xi)=a(x,\xi)(\xi_1+if(x,\xi'))+b(x,\xi')
\]
where $a$ is homogeneous of degree $-1$ and $b$ homogeneous of degree $0$, as
demonstrated in the construction of the systems $E$ and $R$ above. The
term of degree $0$ in the symbol of $(\I_J-a(x,D))P_{11}$ is equal to $b(x,\xi')$.
Repetition of the argument implies that
there exists a $J\times J$ system of classical operators $A_{11}(x,D)$ of order $-1$ such that
$(\I_J-A_{11})P_{11}$ has principal symbol
$(\xi_1+if(x,\xi'))\I_J$ and all lower order terms are independent of $\xi_1$.
If $A$ is the $N\times N$ system
\[
A(x,D)=\left( \begin{array}{cc} \I_J-A_{11}(x,D) & 0\\
0 & \I_{N-J} \end{array} \right)
\]
then the microlocal property of pseudodifferential
operators immediately implies that the range of $AQ$ is microlocally contained in the range
of $AP$ at $K$.
Hence, if there are systems $E$ and $R$ with
\[
R=AQ-AP E
\]
such that all terms in the asymptotic expansion of the symbol
of $R$ have vanishing
Taylor coefficients at $\gamma(I)$, then this also holds for
the symbol of $Q-PE\equiv A^{-1}R$ mod $\varPsi^{-\infty}$,
since the calculus gives that this property is preserved under composition with elliptic
systems.

When $\gamma$ is a minimal bicharacteristic interval,
that is, when $I$ is not reduced to a point, then we may assume
that there exists a neighborhood of $\gamma$ where the implication
\begin{equation}\label{eq:sysmainthm1}
f(x,\xi')=0 \quad \Longrightarrow \quad \partial f (x,\xi') / \partial x_1 \leq 0
\end{equation}
holds. Indeed,
if there is no such neighborhood, then
as shown in~\cite{jw} (see the discussion in connection with equation $(2.19)$ there),
we find that $\gamma$ is just a point and
there exists a point sequence $\{\gamma_j\}_{j=1}^\infty=\{(t_j,x_j',0,\xi_j')\}_{j=1}^\infty$ such that
$\gamma_j\to \gamma$ as $j\to\infty$, and
\begin{equation}\label{eq:sysrmk1}
f(t_j,x_j',\xi_j')=0, \quad \partial f(t_j,x_j',\xi_j') / \partial x_1 >0
\end{equation}
for each $j$.
Note that \eqref{eq:sysrmk1} implies $\{\re \lambda, \im \lambda\}(\gamma_j)>0$ and $\lambda(\gamma_j)=0$
for each $j$ since $\gamma_j=(t_j,x_j',0,\xi_j')$.
Thus, when $\gamma$ is a minimal characteristic point we
conclude that either there is a neighborhood where \eqref{eq:sysmainthm1} holds,
or we can find a sequence $\{\gamma_j\}_{j=1}^\infty$ with the properties given above.
This will allow us to complete the proof of Theorem \ref{thm:bigsystems}
using the following two results.

\begin{thm}\label{thm:sysmainthm1}
Let the $N\times N$ system $P$ be given by
\eqref{eq:1stnormalforsystems1} where $P_{22}$ is elliptic, and
suppose that in a conic neighborhood $\varOmega$ of
\[
\varGamma'=\{(x_1,x',0,\xi'), \ a\leq x_1 \leq b\}\subset T^*(\mathbb{R}^n)\smallsetminus 0
\]
the principal symbol of $P_{11}$ has the form $\lambda(x,\xi)\mathrm{Id}_J$ with
\[
\lambda(x,\xi)=\xi_1+if(x,\xi'),
\]
where $f$ is real valued and homogeneous of degree $1$,
while the lower order terms of the symbol of $P_{11}$
are all independent of $\xi_1$.
Suppose also that
\eqref{eq:sysmainthm1} holds
in $\varOmega$ and that in
any neighborhood of $\varGamma'$ one can find an interval in the $x_1$ direction where $f$
changes sign from $-$ to $+$
for increasing $x_1$.
Assume that if $b>a$
then $f$ vanishes of infinite order on $\varGamma'$ and there exists a $\varrho\ge 0$ such that
for any $\varepsilon>\varrho$ one can
find a neighborhood of
\begin{equation*}
\varGamma_\varepsilon'=\{(x_1,x',0,\xi'), \ a+\varepsilon \leq x_1 \leq b-\varepsilon\}
\end{equation*}
where $f$ vanishes identically.
Furthermore, let the $N\times N$ system $R(x,D_{x'})$
be given by \eqref{1stnormalforsystems3}
and suppose that in $\varOmega$
the symbol of $R$ is given by an asymptotic sum
of homogeneous terms that are all independent of $\xi_1$.
If there exists a compactly based cone $K\subset T^*(\mathbb{R}^n)\smallsetminus 0$
containing $\varOmega$
such that the range of $R$ is microlocally contained in the range of $P$ at $K$, 
then all the terms in the asymptotic sum of the symbol of $R$ have vanishing Taylor coefficients on
$\varGamma_\varrho'$ if $a<b$, and at $\varGamma'$ if $a=b$.
\end{thm}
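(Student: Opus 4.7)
The plan is to follow the strategy used for the scalar analogue treated by the second author in \cite{jw}, exploiting the block structure to reduce to an essentially scalar WKB construction. The starting point is the a priori estimate from Lemma \ref{lemrange1}: for every positive integer $\kappa$ there exist $\nu$, $C$, and a properly supported $N\times N$ system $A$ with $W\!F(A) \cap K = \emptyset$ such that
\begin{equation*}
\|R^*v\|_{(-N_0)} \leq C\bigl(\|P^*v\|_{(\nu)} + \|v\|_{(-N_0-\kappa-n)} + \|Av\|_{(0)}\bigr)
\end{equation*}
for all $v \in C_0^\infty(Y, \C^N)$. The point is to test this estimate against a carefully chosen family of compactly supported smooth functions and derive, by contradiction, the vanishing of Taylor coefficients of the symbol of $R$.

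The block structure of $P$ and $R$ permits a reduction to scalar test functions. Writing $v = {}^t(v_1, v_2)$ with $v_1$ taking values in $\C^J$ and $v_2$ in $\C^{N-J}$, and choosing $v_2 = 0$, we find $P^*v = {}^t(P_{11}^*v_1, 0)$ while $R^*v = {}^t(R_{11}^*v_1, R_{12}^*v_1)$, so that the estimate bounds $\|R_{11}^*v_1\|_{(-N_0)} + \|R_{12}^*v_1\|_{(-N_0)}$ in terms of $\|P_{11}^*v_1\|_{(\nu)}$ plus negligible terms. Further taking $v_1 = w\, e_k$ for scalar $w \in C_0^\infty(Y)$ and a fixed basis vector $e_k \in \C^J$, the fact that $\sigma(P_{11}) = \lambda\,\mathrm{Id}_J$ with lower order symbol terms independent of $\xi_1$ means that $P_{11}^*(w e_k)$ essentially equals $p^* w$ times $e_k$ for a scalar operator $p^*$ of principal symbol $\overline{\lambda}$; consequently one inherits from \cite{jw} the scalar a priori setup component by component, reading off each Taylor coefficient of each entry of $R_{11}$ and $R_{12}$ by varying $k$.

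The heart of the proof is then the WKB construction developed in the scalar case in \cite{jw} for the same eikonal $\lambda = \xi_1 + if(x,\xi')$. One takes a base point $w_0 \in \varGamma_\varrho'$ (or $w_0 = \varGamma'$ when $a=b$), and produces a parameter family $w_h$, $h \to 0^+$, concentrated at $w_0$ with phase approximately solving the $\overline{\lambda}$-eikonal equation and amplitudes tuned so that $\|P_{11}^*(w_h e_k)\|_{(\nu)} = O(h^M)$ for every $M$, while $\|R_{ij}^*(w_h e_k)\|_{(-N_0)}$ extracts a prescribed Taylor coefficient of the symbol of $R_{ij}$ at $w_0$. The sign condition \eqref{eq:sysmainthm1} governs the admissible sign of the imaginary part of the phase and ensures that no exponential blow-up of the phase occurs; the hypothesis that $f$ vanishes in a neighborhood of $\varGamma_\varepsilon'$ for $\varepsilon > \varrho$, together with Theorem \ref{thm:minimal07} providing an approximating sequence of $\varrho_j$-minimal intervals with $\varrho_j \to 0$, allows the phase to be transported along the length of $\varGamma_\varrho'$ without error from $f$. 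The wave-front condition $W\!F(A) \cap K = \emptyset$ makes $\|Aw_h e_k\|_{(0)} = O(h^M)$ since $w_h$ is microlocally supported near $\varGamma' \subset \varOmega \subset K$, and the Sobolev loss in $\|w_h\|_{(-N_0-\kappa-n)}$ is controlled by choosing $\kappa$ large.

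The contradiction is then standard. If some Taylor coefficient of any term in the asymptotic expansion of $\sigma(R_{ij})$ failed to vanish at a point of $\varGamma_\varrho'$ (respectively $\varGamma'$ when $a = b$), the corresponding $w_h$ would give $\|R^*v\|_{(-N_0)}$ growing like a fixed positive power of $h^{-1}$, while the right-hand side is $O(h^M)$ for every $M$, violating the estimate. Hence all Taylor coefficients of all homogeneous terms of the symbol of $R$ vanish on $\varGamma_\varrho'$ (or at $\varGamma'$). I expect the main obstacle to lie not in the reduction or the contradiction mechanism, but in faithfully adapting the scalar WKB construction of \cite{jw} to propagate the phase along the full length of $\varGamma_\varrho'$: this requires that the $\varrho_j$-minimal approximants and their shrinking neighborhoods of $f$-vanishing be matched quantitatively against the Sobolev norms in the a priori estimate so that the error terms remain genuinely negligible as $h \to 0^+$.
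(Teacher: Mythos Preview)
Your overall strategy is the same as the paper's --- contradict the a priori estimate of Lemma~\ref{lemrange1} by feeding in WKB-type approximate solutions of $P^\ast v=0$ concentrated near $\varGamma'$ and reading off Taylor coefficients of $\sigma_R$ --- but the reduction you propose to a purely scalar problem is where the argument breaks down.

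You claim that since $\sigma(P_{11})=\lambda\,\mathrm{Id}_J$ with lower order terms independent of $\xi_1$, taking $v_1=w\,e_k$ gives $P_{11}^\ast(w\,e_k)\approx p^\ast w\cdot e_k$ for a scalar $p^\ast$. This is false: independence of $\xi_1$ says nothing about diagonality. The lower order part of $P_{11}^\ast$ is a full $J\times J$ matrix $F_0(t,x,D_{x})$, so $P_{11}^\ast(w\,e_k)$ has $j$-th component $(P_{11}^\ast)_{jk}w$, and for $j\ne k$ this is a genuine order-zero operator applied to $w$. In the WKB expansion these off-diagonal terms enter the transport equations at the first subprincipal level; with a single scalar amplitude you are trying to solve $J$ transport equations with one unknown, which is overdetermined and cannot in general make $\|P_{11}^\ast v_1\|_{(\nu)}=O(\tau^{-\kappa})$ for arbitrary $\kappa$. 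The paper handles this by taking $v_1=(v_{1,\tau},\ldots,v_{J,\tau})$ with $J$ independent amplitude towers $\phi_{k,m}$, solving the \emph{coupled} linear system of transport ODEs (this is equation~\eqref{ODE} in the paper), and only \emph{then} choosing initial data at the base point so that all $\phi_{k,0}$ with $k\ge 2$ have vanishing low-order Taylor coefficients there while $\phi_{1,0}$ has a single prescribed nonzero one. This is what isolates one column of the first nonzero Taylor matrix $\mathcal R_0$ of $\sigma_{R^\ast}$.

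Two smaller points. First, the Taylor coefficient is not extracted by watching $\|R^\ast V_\tau\|_{(-N_0)}$ grow; rather one pairs $R^\ast V_\tau$ against a rescaled dual test function $\mathcal H_\tau$ with $\|\mathcal H_\tau\|_{(N_0)}\le C$ and shows $\tau^{m_0}(R^\ast V_\tau,\overline{\mathcal H_\tau})$ has a nonzero limit --- this is the lower bound that contradicts the estimate. Second, Theorem~\ref{thm:minimal07} plays no role inside the proof of Theorem~\ref{thm:sysmainthm1}; the $\varrho$-minimality is already a hypothesis here, and the approximation by $\varrho_j$-minimal intervals with $\varrho_j\to 0$ is used one level up, in the proof of Theorem~\ref{thm:bigsystems}.
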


\begin{thm}\label{thm:sysmainthm2}
Let the $N\times N$ system $P$ be given by
\eqref{eq:1stnormalforsystems1} where $P_{22}$ is elliptic, and
suppose that in a conic neighborhood $\varOmega$ of
\[
\varGamma'=\{(0,\varepsilon_n)\}\subset T^*(\mathbb{R}^n)\smallsetminus 0
\]
$P_{11}$ has the form $P_{11}=(D_1+ix_1D_n)\mathrm{Id}_J$.
Moreover, let
the $N\times N$ system $R(x,D_{x'})$
be given by \eqref{1stnormalforsystems3}
and suppose
that the symbol of $R$ is given by an asymptotic sum
of homogeneous terms that are all independent of $\xi_1$.
If there exists a compactly based cone $K\subset T^*(\mathbb{R}^n)\smallsetminus 0$
containing $\varOmega$
such that the range of $R$ is microlocally contained in the range of $P$ at $K$, 
then all the terms in the asymptotic sum of the symbol of $R$ have vanishing
Taylor coefficients on $\varGamma'$.
\end{thm}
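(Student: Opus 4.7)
The approach is to apply the a priori estimate from Lemma~\ref{lemrange1} to $P$ and $R$, then exploit the block structure of both systems to decouple it into scalar Mizohata-type estimates, and finally invoke the corresponding scalar argument from~\cite{jw}.

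First I would apply Lemma~\ref{lemrange1} to the systems $P$ and $R$: given $\kappa\ge 1$, there exist $\nu$, $C$ and a properly supported $N\times N$ system $A$ with $W\!F(A)\cap K=\emptyset$ such that
\[
\|R^\ast v\|_{(-N_0)}\le C\bigl(\|P^\ast v\|_{(\nu)}+\|v\|_{(-N_0-\kappa-n)}+\|Av\|_{(0)}\bigr)
\]
for all $v\in C_0^\infty(Y,\C^N)$. The block structure
\[
P^\ast=\begin{pmatrix}(D_1-ix_1D_n)\I_J & 0\\ 0 & P_{22}^\ast\end{pmatrix},\qquad R^\ast=\begin{pmatrix}R_{11}^\ast & 0\\ R_{12}^\ast & 0\end{pmatrix}
\]
suggests testing with vectors of the form $v=(we_k,0)$, where $w\in C_0^\infty(Y,\C)$, $e_k$ is the $k$-th standard basis vector of $\C^J$, and $k=1,\dots,J$. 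This choice kills the elliptic block $P_{22}^\ast$ and turns the system estimate into
\[
\sum_{j=1}^N \|b_{jk}(x,D_{x'})w\|_{(-N_0)}\le C\bigl(\|L^\ast w\|_{(\nu)}+\|w\|_{(-N_0-\kappa-n)}+\|B_kw\|_{(0)}\bigr),
\]
where $L=D_1+ix_1D_n$, the scalar operators $b_{jk}(x,D_{x'})$ collect the entries of the $k$-th column of $R^\ast$ (so each has symbol independent of $\xi_1$), and $B_k$ is the scalar operator obtained from the $k$-th column of $A$, which inherits the property $W\!F(B_k)\cap K=\emptyset$.

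This is precisely the a priori estimate that drives the proof of the scalar analog of Theorem~\ref{thm:sysmainthm2} in~\cite{jw}. Its proof proceeds by inserting quasi-mode test functions of the form
\[
w_\tau(x)=\tau^{\delta}\chi(x)\exp\bigl(i\tau x_n-\tau x_1^2/2\bigr),
\]
where $\chi\in C_0^\infty$ is a cut-off near the origin and $\delta$ is tuned so that $\|w_\tau\|_{(0)}$ stays of fixed order. The phase $x_n+ix_1^2/2$ is a formal solution of the eikonal equation for $L^\ast=D_1-ix_1D_n$, which forces $L^\ast w_\tau$ to be supported where $\partial\chi\ne 0$ and hence rapidly decreasing in $\tau$; the low-regularity term $\|w_\tau\|_{(-N_0-\kappa-n)}$ decays as a negative power of $\tau$ once $\kappa$ is large; and $\|B_kw_\tau\|_{(0)}$ is rapidly decreasing since $W\!F(w_\tau)\subset\{(0,\varepsilon_n)\}\subset K$ while $W\!F(B_k)$ is disjoint from $K$. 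Meanwhile, stationary-phase evaluation of $b_{jk}(x,D_{x'})w_\tau$, after pre-multiplying by monomials $x^\alpha$ and composing with derivatives in $x'$, isolates arbitrary Taylor coefficients $\partial_x^\alpha\partial_{\xi'}^\beta b_{jk}(0,\xi^0)$ as the leading term in an asymptotic expansion in $\tau$. A nonzero coefficient would violate the estimate as $\tau\to\infty$, so every Taylor coefficient of every $b_{jk}$ must vanish at $(0,\varepsilon_n)$. Running $k$ through $1,\dots,J$ then exhausts every entry of $R$.

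The main technical obstacle is the quantitative stationary-phase analysis in the last step, which must simultaneously isolate the targeted Taylor coefficient on the left-hand side and control the three terms on the right with adequate $\tau$-decay. These computations are identical in spirit to the scalar minimal characteristic point case treated in~\cite{jw}, thanks to the decoupling into scalar pieces afforded by the normal form of $P$ together with the choice $v=(we_k,0)$; no new phenomena arise from the system setting once this decoupling is performed.
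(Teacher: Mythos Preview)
Your overall strategy---apply Lemma~\ref{lemrange1}, exploit the block structure to reduce to a scalar Mizohata-type estimate, and then invoke the scalar argument from~\cite{jw}---is the same as the paper's. Your decoupling via $v=(we_k,0)$ is in fact a bit cleaner than what the paper does: since here $P_{11}=(D_1+ix_1D_n)\I_J$ exactly, with no lower-order coupling between components, one can indeed treat each column of $R^\ast$ separately. The paper instead keeps all $J$ components of the quasimode nonzero and kills the unwanted contributions later by choosing the amplitude functions $\phi_{k,0}$ to have vanishing Taylor coefficients at the base point for all $k$ but one; that device is actually needed in Theorem~\ref{thm:sysmainthm1}, where lower-order terms of $P_{11}$ couple the transport equations, and the paper simply reuses it here.

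There is, however, a genuine gap in your choice of phase. With $w(x)=x_n+ix_1^2/2$ one has $\im w=x_1^2/2$, which vanishes on the entire hyperplane $\{x_1=0\}$, not only at the origin. Consequently $L^\ast w_\tau=\tau^\delta(L^\ast\chi)e^{i\tau w}$ is supported where $\partial\chi\ne 0$, but on the portion of that support with $x_1=0$ there is no exponential damping, so $\|L^\ast w_\tau\|_{(\nu)}$ is \emph{not} rapidly decreasing in $\tau$. Likewise the quasimode concentrates microlocally on $\{((0,x'),\varepsilon_n):(0,x')\in\supp\chi\}$ rather than on the single point $(0,\varepsilon_n)$, so the inclusion $W\!F(w_\tau)\subset\{(0,\varepsilon_n)\}$ fails and you cannot conclude that $\|B_kw_\tau\|_{(0)}\to 0$. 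The paper (following~\cite[Section~3]{jw}) repairs this by taking
\[
w(x)=x_n+\tfrac{i}{2}\bigl(x_1^2+\cdots+x_{n-1}^2+(x_n+ix_1^2/2)^2\bigr),
\]
which still solves the eikonal equation for $L^\ast$ but now satisfies $\im w(x)\ge|x|^2/4$ near $0$; combined with amplitudes $\phi_k$ solving the transport equation $D_1\phi_k-ix_1D_n\phi_k=0$ (obtained via Cauchy--Kovalevsky with prescribed analytic data on $x_1=0$), this gives exact local solutions concentrated at the single point $(0,\varepsilon_n)$, for which all three right-hand terms in the a~priori estimate decay as required. With this corrected phase your argument goes through.
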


Postponing the proofs of these results
we are now left with three cases:

i) $\gamma$ is a minimal bicharacteristic interval. Then there is a neighborhood
$\varOmega$ of $\gamma$ where \eqref{eq:sysmainthm1}
holds, and since $\im\lambda=f$ is homogeneous we may assume that $\varOmega$ is conic.
By Theorem \ref{thm:minimal07} there exists a sequence $\{\varGamma_j\}_{j=1}^\infty$ of $\varrho_j$-minimal
bicharacteristic intervals such that $\varrho_j\to 0$ and $\varGamma_j\to\gamma$ as
$j\to\infty$. For sufficiently large $j$ we have $\varGamma_j\subset\varOmega$.
If
\[
\varGamma_j=\{(x_1,x_j',0,\xi_j'): a_j\le x_1\le b_j\}
\]
then all the terms in the asymptotic sum of the symbol of $R$
vanish of infinite order on
\[
\varGamma_{\varrho_j}=\{(x_1,x_j',0,\xi_j'): a_j+\varrho_j\le x_1\le b_j-\varrho_j\}
\]
by Theorem \ref{thm:sysmainthm1}. Since $\varGamma_{\varrho_j}\to\gamma$
as $j\to\infty$, and all the terms in the asymptotic sum of the symbol
of $R$ are smooth functions, it follows that all the terms in the
asymptotic sum of the symbol of $R$ vanish of infinite order on $\gamma$,
thus proving Theorem \ref{thm:bigsystems} in this case.

ii) $\gamma$ is a minimal characteristic point and \eqref{eq:sysmainthm1}
holds. Then all the terms in the
asymptotic sum of the symbol of $R$ vanish of infinite order on $\gamma$
by Theorem \ref{thm:sysmainthm1},
so Theorem \ref{thm:bigsystems} follows.

iii) $\gamma$ is a minimal characteristic point and \eqref{eq:sysmainthm1}
is false. Let $\gamma_j$ be a fixed point in the sequence $\{\gamma_j\}_{j=1}^\infty$
satisfying \eqref{eq:sysrmk1}. Since
$P$ is given by \eqref{eq:1stnormalforsystems1} and
the principal symbol of $P_{11}$ is just the scalar function $\lambda$ times the identity matrix,
we can then by conjugating as in the scalar case (see the proof of~\cite[Theorem~26.3.1]{ho4})
show that $P_{11}$ is microlocally conjugate to $(D_1+ix_1D_n)\mathrm{Id}_J$,
which allows us to prove Theorem \ref{thm:bigsystems} by an application of
Theorem \ref{thm:sysmainthm2}.
We prove this by adapting the arguments in~\cite[p.~18]{de1}, where it is shown to hold for
systems of semiclassical operators.
Note that we now forgo the previous preparation $Q=PE+R$ with $R$
given by \eqref{1stnormalforsystems3},
with the intention of recreating it after having
conjugated $P$.

Since $\gamma_j$ is fixed, we can
by choosing appropriate local coordinates
use~\cite[Theorem~21.3.3]{ho3}
to find a canonical
transformation $\chi$ and a smooth function $\mu$
such that $\chi(0,\varepsilon_n)=\gamma_j$
and $\chi^\ast(\mu\lambda)=\xi_1+ix_1\xi_n$ near $(0,\varepsilon_n)$.
By~\cite[Theorem~26.3.1]{ho4} together with
Lemma \ref{sys:symbolvanishafterconjugation} in the appendix
we can then find systems $\tilde A$ and $\tilde B$ of Fourier integral operators
such that $\tilde P=\tilde B P\tilde A$ is still on a normal form of the type \eqref{eq:1stnormalforsystems1}
with $\sigma(\tilde P_{11})=(\xi_1+ix_1\xi_n)\mathrm{Id}_J$
in a conic neighborhood $\varOmega$ of $(0,\varepsilon_n)$ and $\tilde P_{22}$ elliptic.
Let therefore
\[
\tilde P_{11}=\lambda(x,D)\mathrm{Id}_J+F,
\]
where $\lambda(x,\xi)=\xi_1+ix_1\xi_n$ and $F\in\varPsi_\mathrm{cl}^0(\R^n)$
has a symbol with asymptotic expansion $\sigma_F(w)\sim \sum_{j\ge 0} F_{-j}(w)$.
Here $F_{-k}$ is a matrix valued function, homogeneous of degree $-k$.
Let the systems $A,B\in\varPsi_\mathrm{cl}^0(\R^n)$ have symbols
$\sigma_A\sim\sum_{j\ge 0} A_{-j}$ and $\sigma_B\sim\sum_{j\ge 0} B_{-j}$
with $A_0(w)\equiv B_0(w)$. Then the calculus gives
\[
\tilde P_{11}A-B\lambda(x,D)\mathrm{Id}_J=E\in\varPsi_\mathrm{cl}^0(\R^n),
\]
where the system $E$ has symbol $\sigma_E\sim\sum_{j\ge0}E_{-j}$ and
\[
E_{-k}=\lambda (A_{-k-1}-B_{-k-1})+F_0 A_{-k}
+\partial_\xi \lambda D_x A_{-k} - \partial_\xi B_{-k} D_x \lambda+R_{-k}.
\]
Here $R_{-k}$ only depends on $A_{-j},B_{-j}$ for $j<k$ and $R_0\equiv 0$. Using the fact that
\begin{equation*}
\begin{aligned}
\partial_\xi \lambda D_x A_{-k} & - \partial_\xi B_{-k} D_x \lambda =
\frac{1}{2i} H_\lambda  (A_{-k} + B_{-k} )\\
& + \frac{1}{2i}
\Big( (\partial_x \lambda ) \partial_\xi ( A_{-k} - B_{-k} ) +  (\partial_\xi \lambda ) \partial_x ( A_{-k} - B_{-k} ) \Big),
\end{aligned}
\end{equation*}
where $H_\lambda$ is the Hamilton vector field of $\lambda$,
we can therefore write
\[
E_{-k}=\frac{1}{2i} H_\lambda  (A_{-k} + B_{-k} )+ \lambda (A_{-k-1}-B_{-k-1})+F_0 A_{-k}
+R_{-k},
\]
where $R_{-k}$ now also depends on the difference $A_{-k} -B_{-k}$
in addition to $A_{-j},B_{-j}$ for $j<k$.
Note that since $A_0(w)\equiv B_0(w)$ we still have $R_0\equiv 0$.
Now we can choose $A_0$ so that $A_0=\mathrm{Id}_J$ on
$V_0=\{w:\im\lambda(w)=0\}$ and $\frac{1}{i}H_\lambda A_0+F_0A_0$
vanishes of infinite order on $V_0$ near $(0,\varepsilon_n)$.
In fact, since $\{\re\lambda,\im\lambda\}\ne 0$ at $(0,\varepsilon_n)$,
we find that $H_{\re\lambda}$ and $H_{\im\lambda}$
are linearly independent at $(0,\varepsilon_n)$,
and that $H_{\re\lambda}$ is not tangent to $V_0$ at
$(0,\varepsilon_n)$. In particular, $V_0$ is near
$(0,\varepsilon_n)$ a hypersurface such that 
$H_{\re\lambda}$ is transversal to $V_0$. Thus, the equation
determines all derivatives of $A_0$ on $V_0$,
and we can use Borel's theorem to obtain a solution.
Next, we set
\[
B_{-1}-A_{-1}=\Big( \frac{1}{i}H_\lambda A_0+F_0A_0\Big)\lambda^{-1}\in C^\infty
\]
and obtain $E_0\equiv 0$. This also completely determines
$R_{-1}$. Similarly, lower order terms are eliminated by making
\[
\frac{1}{2i} H_\lambda  (A_{-k} + B_{-k} )+F_0 A_{-k}
+R_{-k}
\]
vanish of infinite order on $V_0$. Note that since only the difference $B_{-k}-A_{-k}$ was determined in
the previous step, this equation can be solved for $A_{-k}$, which then also determines $B_{-k}$.
Next, by choosing $B_{-k-1}-A_{-k-1}$ appropriately we obtain $E_{-k}\equiv 0$, and in the process
we also completely determine $R_{-k-1}$.
Since $B$ is microlocally invertible near
$(0,\varepsilon_n)$
by construction,
we find that
\[
B^{-1}\tilde P_{11}A\equiv \lambda(x,D)\mathrm{Id}_J\quad\text{mod }\varPsi^{-\infty}
\]
near
$(0,\varepsilon_n)$,
if $B^{-1}$ is a properly supported
microlocal parametrix of $B$. Since Definition \ref{defrangesystems}
is invariant under this type of composition by the discussion
in the first paragraph of this section, we can
let $A$ and $B^{-1}$ be included in the systems $\tilde A$ and
$\tilde B$ of Fourier integral operators already introduced,
and repeat the arguments above to obtain
\begin{equation*}
\tilde BQ\tilde A =\tilde BP\tilde A E+R(x,D_{x'}),
\end{equation*}
where $R$ is of the form \eqref{1stnormalforsystems3}
in a neighborhood of $(0,\varepsilon_n)$
with range microlocally contained in the range of $\tilde BP\tilde A$ at
some compactly based cone $K'$ containing $\varOmega$, and $E$ and $R$ have classical symbols.
Then all the terms
in the asymptotic expansion of the symbol of
$R$ vanish of infinite order at $(0,\varepsilon_n)$
by Theorem \ref{thm:sysmainthm2}. If appropriate
systems $\tilde A'$ and $\tilde B'$ of Fourier integral
operators are chosen as in the proof of
Lemma \ref{sys:symbolvanishafterconjugation} in the appendix,
that is,
\[
W\! F(\tilde A' \tilde B-\I_N)\cap K=\emptyset,\quad
W\! F(\tilde A \tilde B '-\I_N)\cap K=\emptyset,\quad
\]
then Lemma \ref{sys:symbolvanishafterconjugation}
implies that all the terms in the asymptotic expansion of the symbol of
\[
Q-P\tilde A E \tilde B'\equiv
\tilde A' (\tilde BQ\tilde A  - \tilde BP\tilde A E )\tilde B'
= \tilde A' R(x,D_{x'}) \tilde B'
\quad\text{mod }\varPsi^{-\infty}(K)
\]
vanish of infinite order at $\gamma_j$.
Note that $\tilde A' R(x,D_{x'}) \tilde B'$ now has the block form
\eqref{1stnormalforsystems3} in a neighborhood of $\gamma_j$. However, this
neighborhood does not necessarily contain $\gamma$ and the symbol
is no longer necessarily independent of $\xi_1$.

We have now shown that for each $j$ there exists an operator
$E_j\in\varPsi_{\mathrm{cl}}^{0}(\mathbb{R}^n)$ such that
all the terms
in the asymptotic expansion of the symbol of $Q-PE_j$
have vanishing Taylor coefficients at $\gamma_j$.
To construct the operator $E$ in Theorem \ref{thm:bigsystems}, we do the following.
For each $j$, write $E_j$ in block form corresponding to
that of $P$ as $E_j=(E_{k\ell,j})$, $k,\ell=1,2$, and for $k=\ell=1$ denote the symbol of $E_{11,j}$ by
\[
e^j(x,\xi)\sim\sum_{\ell=0}^\infty e_{-\ell}^j(x,\xi)
\]
where $e_{0}^j(x,\xi)$ is the principal part,
and $e_{-\ell}^j(x,\xi)$ is homogeneous of degree $-\ell$.
With $Q$ given by \eqref{1stnormalforsystems1}
and the symbol of $Q_{11}$ given by
\eqref{psblockQ11}, let $\sigma_{P_{11}}=p_1+p_0+\ldots$ so that
$p_1=\lambda\I_J$ is the principal symbol
of $P_{11}$. It then follows by
Proposition \ref{appthm26} in the appendix that
there exists a matrix valued function $e_0\in C^\infty(T^\ast(\mathbb{R}^n)\smallsetminus 0,\mathcal L_J)$,
homogeneous of degree $0$, such that $q_1-p_1 e_0$ has vanishing Taylor coefficients at
$\gamma$.

This argument can be repeated for lower order terms. Indeed, the term of degree $0$
in the symbol of $Q_{11}-P_{11}E_{11,j}$ is
\[
\sigma_0(Q_{11}-P_{11}E_{11,j})=\tilde{q}_j-p_1 e_{-1}^j,
\]
where
\begin{align*}
\tilde{q}_j(x,\xi)&=q_0(x,\xi)
-p_0(x,\xi) e_0^j(x,\xi)
-\sum_k \partial_{\xi_k} p_1(x,\xi) D_{x_k}e_0^j(x,\xi) .
\end{align*}
We can write
\[
p_1(x,\xi)e_{-1}^j(x,\xi)=p_1(x,\xi/|\xi|)e_{-1}^j(x,\xi/|\xi|),
\]
so that $\tilde{q}_j(x,\xi)$, $p_1(x,\xi/|\xi|)$ and $e_{-1}^j(x,\xi/|\xi|)$ are
all homogeneous of degree $0$. Since
\[
\partial_x^\alpha\partial_\xi^\beta e_0(\gamma)=\lim_{j\to\infty}
\partial_x^\alpha\partial_\xi^\beta e_0^j(\gamma_j)
\]
it follows by Proposition \ref{appthm26} in the appendix
that there is a matrix valued function $g\in C^\infty(T^\ast(\mathbb{R}^n)\smallsetminus 0,\mathcal L_J)$,
homogeneous of degree $0$, such that
\[
q_0(x,\xi)
-p_0(x,\xi) e_0(x,\xi)
-\sum_k \partial_{\xi_k} p_1(x,\xi) D_{x_k} e_0(x,\xi)
- p_1(x,\xi/|\xi|)g(x,\xi)
\]
has vanishing Taylor coefficients at $\gamma$. Putting $e_{-1}(x,\xi)=|\xi|^{-1}g(x,\xi)$
we find that
\[
\partial_x^\alpha\partial_\xi^\beta e_{-1}(\gamma)=\lim_{j\to\infty}
\partial_x^\alpha\partial_\xi^\beta e_{-1}^j(\gamma_j),
\]
and that
\begin{align*}
\sigma_0(Q_{11}-P_{11}\circ e_0(x,D)-P_{11}\circ e_{-1}(x,D))
\end{align*}
has vanishing Taylor coefficients at $\gamma$.
Continuing this way we successively
obtain matrix valued functions $e_m(x,\xi)\in
C^\infty(T^\ast(\mathbb{R}^n)\smallsetminus 0,\mathcal L_J)$,
homogeneous of degree $m$ for $m\le 0$, such that
\[
\sigma_{Q_{11}}-(\sum_{m=0}^M e_{-m})\sigma_{P_{11}} \quad \text{mod }S_{\mathrm{cl}}^{-M}
\]
has vanishing Taylor coefficients at $\gamma$. If we let $E_{11}$ have symbol
\[
\sigma_{E_{11}}(x,\xi)\sim \sum_{m=0}^\infty (1-\phi(\xi))e_{-m}(x,\xi)
\]
with scalar $\phi\in C_0^\infty$ equal to $1$ for $\xi$ close to $0$,
then $E_{11}\in\varPsi_{\mathrm{cl}}^{0}(\mathbb{R}^n)$ and all terms in the
asymptotic expansion of the symbol of $Q_{11}-P_{11}E_{11}$ have vanishing
Taylor coefficients at $\gamma$.
Given that $P$ has the form \eqref{eq:1stnormalforsystems1},
these arguments can be repeated to construct a $J\times (N-J)$ system $E_{12}$
such that all terms in the
asymptotic expansion of the symbol of $Q_{12}-P_{11}E_{12}$ have vanishing
Taylor coefficients at $\gamma$. By substituting Proposition \ref{appthm26elliptic}
for Proposition \ref{appthm26} throughout, these arguments also show that
there is an $(N-J)\times J$ system $E_{21}$ and an
$(N-J)\times (N-J)$ system $E_{22}$ such that all terms in the
asymptotic expansion of the symbol of $Q_{2\ell}-P_{22}E_{2\ell}$ have vanishing
Taylor coefficients at $\gamma$ for $\ell=1,2$. Then $E=(E_{k\ell})$ has the required properties.

It remains to prove Theorems \ref{thm:sysmainthm1} and \ref{thm:sysmainthm2}.
Since the system $R$ in both results share some properties,
we begin with a general discussion.
First, as in the scalar case we note that in view of the calculus
it suffices to prove the theorems for the adjoint
\begin{equation}\label{eqnew:sysadjoint}
R^\ast (x,D_{x'}) =\left( \begin{array}{cccccc} R_{11} & \ldots & R_{1J} & 0 &\ldots & 0 \\
\vdots & \ddots & \vdots & \vdots & \ddots & \vdots \\
R_{N1} & \ldots & R_{NJ} & 0 &\ldots & 0  \end{array} \right)
\end{equation}
of $R$. Let therefore the symbol of $R^\ast$
have the asymptotic expansion
\begin{equation}\label{eqnew:sysadjoint1}
\sigma_{R^\ast}
\sim \sum_{j=-1}^{\infty}r_{-j},
\end{equation}
where $r_{-j}$ is the homogeneous matrix of degree $-j$ in the asymptotic
sum of the symbol of $R^\ast$. Regarding the Taylor coefficients of
$r_{-j}$ as matrices, we can for any point $(x_0,\xi_0)$ belonging to
$\varGamma'$ then use the ordering $>_t$ given by~\cite[Definition~3.2]{jw}
to find the first nonzero matrix $\mathcal R_0=r_{-j_0 (\alpha_0)}^{ (\beta_0)}(x_0,\xi_0)$
with respect to $>_t$. If $j_0+|\alpha_0|+|\beta_0|=m_0$ for some number $m_0$,
then in particular all matrices $r_{-j (\alpha)}^{ (\beta)}(x_0,\xi_0)$
equal the zero matrix for $j+|\alpha|+|\beta|<m_0$.
Since the ordering will not appear explicitly in
the proof we refrain from describing it further.
We will assume that
we have a nonzero entry in the first row and the first column in the
matrix $\mathcal R_0$,
but this will only affect the construction below in an obvious manner,
so it is of no importance.

\begin{proof}[Proof of Theorem \ref{thm:sysmainthm1}]
We shall prove the theorem by contradiction, arguing
that if it is false, then Lemma \ref{lemrange1}
does not hold. This will be accomplished by
constructing approximate solutions to the equation
$P^\ast v=0$ concentrated near $\varGamma'$ in such a way that
the proof reduces to the scalar case.
Note that the symbol of $R^\ast$
is independent of $\xi_1$, and that $R^\ast$ acting on a vector $v\in C_0^\infty(\mathbb{R}^n,
\C^N)$ only depends on the first $J$ coordinates of $v$. Hence we can let
the approximate solutions
be vectors in $\C^J\times \{0\}\subset \C^N$.
We shall let
each component be an approximate solution to a scalar problem of the same kind,
constructed as in~\cite[Section~4]{jw}.

To simplify notation, we shall in what follows write $t$ instead of $x_1$ and $x$ instead of
$x'=(x_2,\ldots,x_n)$, and we may without loss of generality assume that $\varGamma'$
is given by
\[
\varGamma'=\{(t,0,0,\xi^0): a\le t\le b\},
\]
where $\xi^0=(0,\ldots,0,1)\in\R^{n-1}$.
Let $K$ and $\varOmega$ be the cones given by Theorem \ref{thm:sysmainthm1}.
Given any positive integer $M$ we can by~\cite[Lemma~26.4.14]{ho4} find a curve $t\mapsto (t,y(t),0,\eta(t))$
as close to $\varGamma'$ as desired, and functions $w_0$ and $w_\alpha$ such that
\begin{equation}\label{formofsoln}
w(t,x)=w_0(t)+\langle x-y(t),\eta(t)\rangle+\sum_{2\le|\alpha|\le M}w_\alpha(t)(x-y(t))^\alpha/|\alpha|!
\end{equation}
is a formal solution to the eiconal equation
\begin{equation}\label{eiconal}
\partial w/\partial t - i f(t,x,\partial w/\partial x)=0
\end{equation}
with an error of order $\mathcal{O}(|x-y(t)|^{M+1})$
in a neighborhood $Y$ of
\begin{equation}\label{specialcompactset}
\{(t,0): a\leq t \leq  b\}\subset \mathbb{R}^n,
\end{equation}
such that $\im w>0$ in $Y$ except
on a compact non-empty subset $T$ of the curve $x=y(t)$,
while $w=0$ on $T$. By part (i) of~\cite[Lemma~26.4.14]{ho4}
we can choose $w$ so that
\begin{equation*}
\varGamma_0=\{ (t,x,\partial w(t,x) / \partial t , \partial w(t,x) / \partial x ): (t,x)\in T \}
\end{equation*}
is contained in $\varOmega$, which is done to ensure that if
$A$ is a given system of pseudodifferential operators with
wave front set contained in the complement
of $K$, then $W\! F(A)$ does not meet the cone generated by $\varGamma_0$.
Note also that the functions $w_\alpha$ can be chosen so that
for $|\alpha|=2$ we have that the matrix $\im w_{jk}-\delta_{jk}/2$
is positive definite, where $\delta_{jk}$ is the Kronecker delta.
If $\varGamma'$ is a point
we can thus obtain a sequence $\{\gamma_j\}_{j=1}^\infty$ of curves
\[
\gamma_j(t)=(t, y_j(t),0,\eta_j(t)), \quad a_j'\leq t \leq b_j',
\]
approaching $\varGamma'$ together with solutions $w_j$ to \eqref{eiconal} which implies that
at $t=c_j'$ we have
\[
(c_j', y_j(c_j'),0,\eta_j(c_j'))\rightarrow \varGamma' \quad \text{as } j\rightarrow \infty
\]
in $T^\ast(\mathbb{R}^n)\smallsetminus 0$, where $c_j'$ is the point where
$\re w_{0 j}=\im w_{0 j}=0$. 
Similarly, if $\varGamma'$ is an interval
and $\varrho\ge 0$ is the number given by Theorem \ref{thm:sysmainthm1},
then for any point $\omega$ in the interior of $\varGamma_{\varrho}'$
we can use~\cite[Lemma~4.1]{jw} in place of~\cite[Lemma~26.4.14]{ho4}
to obtain a sequence $\{\gamma_j\}_{j=1}^\infty$ of curves
approaching $\varGamma'$ and a sequence $\{w_{0j}\}_{j=1}^\infty$ of functions
such that for each $j$ there exists a point $\omega_j\in\gamma_j$
with $\omega_j=\gamma_j(t_j)$
which can be chosen so that $\re w_{0 j}(t_j)=\im w_{0 j}(t_j)=0$ and
$\omega_j\rightarrow \omega$ as $j\to \infty$.
If all the terms in the asymptotic sum of the symbol of $R^\ast$ have vanishing Taylor coefficients at
$\omega_j$, or at $(c_j', y_j(c_j'),0,\eta_j(c_j'))$ when $\varGamma'$ is a point, then Theorem
\ref{thm:sysmainthm1}
will follow by continuity. In what follows we will suppress the index $j$
to simplify notation, and we will show that
all the terms in the asymptotic sum of the symbol of $R^\ast$ have vanishing Taylor coefficients at
one of these points, denoted henceforth by $\omega_0$, with
$\omega_0=\gamma(t_0)$ for some curve
\begin{equation}\label{criticalcurve}
t\mapsto\gamma(t)=(t,y(t),0,\eta(t))
\end{equation}
with the properties given above.

So suppose this is false, and let $R^\ast$ be given by \eqref{eqnew:sysadjoint}.
Let $M$ be a large positive integer to be determined later,
and let $w$ be of the form
\eqref{formofsoln}, corresponding to the curve $t\mapsto\gamma(t)$ containing $\omega_0$,
such that $w$ is an approximate solution to \eqref{eiconal} with
an error of order $\mathcal{O}(|x-y(t)|^{M+1})$ in a neighborhood
$Y$ of \eqref{specialcompactset}.
Let $N_0$ be the integer given by Definition \ref{defrangesystems}, and for
$1\le k\le J$ let $v_{k,\tau}\in C_0^\infty(\mathbb{R}^n,\C)$ be an approximate solution
of the form
\begin{equation}\label{eq:scalarapprsol}
v_{k,\tau}(t,x)=e^{i\tau w(t,x)}\sum_{m=0}^M \phi_{k,m}(t,x)\tau^{-m}.
\end{equation}
Here the amplitude functions $\phi_{k,m}\in C_0^\infty(\R^n,\C)$
are to be determined shortly.
Let
\[
V_\tau=\tau^{N_0+n}(v_{1,\tau},\ldots,v_{J,\tau},0)\in C_0^\infty(\mathbb{R}^n,\C^N).
\]
Note that the $v_{k,\tau}$'s are approximate solutions of the same type
as those in~\cite[Section~26.4]{ho4}. Taking
the additional factor $\tau^{N_0+n}$ in $V_\tau$
into account, it therefore follows by~\cite[Lemma~26.4.15]{ho4} that we have
\begin{align}
\label{est:usewithlemma1}
\|V_\tau\|_{(-N_0-n-\kappa)} & \le C\tau^{-\kappa},\quad\tau>1,\\
\label{est:usewithlemma2}
\|A V_\tau\|_{(0)} & \le C\tau^{-\kappa},\quad\tau>1,
\end{align}
for any $\kappa>0$ if $A$ is a pseudodifferential operator with wave front set disjoint
from the cone generated by
\begin{equation}\label{controlofwf}
\{(t,x,w'(t,x)): x\in\bigcup_{k,m}\supp\phi_{k,m}, \im w(t,x)=0\}.
\end{equation}

If $\nu$ is the number given by Lemma \ref{lemrange1}
and $\kappa$ is any positive number,
then our goal is to choose the amplitude functions so that
\begin{equation}\label{est:usewithlemma3}
\|P^\ast V_\tau\|_{(\nu)}\le C\tau^{-\kappa}
\end{equation}
if the number $M$ given by \eqref{eq:scalarapprsol}
is sufficiently large.
Note that this estimate is not affected if the amplitude
functions $\phi_{k,m}$ are multiplied with a
cutoff function in $C_0^\infty(Y,\R)$ which is $1$ in a
neighborhood of the compact set where $\im w=0$. Since
the $\phi_{k,m}$'s will be irrelevant outside $Y$ for large
$\tau$ by construction, we can in this way choose them
to be supported in $Y$ so that $V_\tau\in C_0^\infty(Y,\C^N)$.
Now,
\begin{equation}\label{actionformulaPstar}
P^\ast V_\tau=
\left( \begin{array}{c}
\tau^{N_0+n}P_{11}^\ast {}^t(v_{1,\tau},\ldots ,v_{J,\tau})\\
0
\end{array}\right),
\end{equation}
and by the assumptions of Theorem \ref{thm:sysmainthm1}
we can write $P_{11}^\ast=(D_t-if(t,x,D_x))\mathrm{Id}_J+F_0(t,x,D_{x})$
for some system $F_0\in\varPsi_\mathrm{cl}^0(\R^n)$ with symbol
depending on $t$, $x$ and $\xi$.
Since we have $\im w(t,x)>0$ everywhere except at some points
belonging to the curve $(t,x)=(t,y(t))$ where $w_x'(t,y(t))=\eta(t)$
and $\im w''$ is positive definite by construction,
we can use~\cite[Lemma~26.4.16]{ho4} to obtain a formula
for how $P^\ast$ acts on $V_\tau$.
In view of the discussion following that result,
we find that since $f$ is homogeneous of degree $1$ we have
\[
f(t,x,D_x)(e^{i\tau w}\phi_{k,m})=e^{i\tau w}
\sum_{|\alpha|\le M} f^{(\alpha)}(t,x,\tau w_x')D_x^\alpha\phi_{k,m}
+\mathcal{O}(\tau^{(1-M)/2})
\]
for $1\le k\le J$.
Here $f(t,x,\xi)$ is not defined for complex $\xi$, but since
$w_x'(t,y(t))=\eta(t)$, the expression
$f^{(\alpha)}(t,x,\tau w_x')$ is given meaning if it for each multi-index $\alpha\in\N^{n-1}$
is replaced by a finite Taylor expansion at $\tau \eta(t)$
representing the value at $\tau w_x'(t,x)$.

Now recall that $w$ is an approximate solution to \eqref{eiconal} with an error of order
$\mathcal O(|x-y(t)|^{M+1})$. Since a function $\chi(t,x)e^{i\tau w}$ can be
estimated by $\tau^{-\ell/2}$ if $\chi$ vanishes of order $\ell$ when $x=y(t)$
it follows that
\[
e^{i\tau w}(\tau w_t'- i\tau f(t,x,w_x'))\phi_{k,m}=\mathcal O(\tau^{(1-M)/2}).
\]
Recalling the definition of $v_{k,\tau}$ and using the homogeneity of $f$ we thus obtain
\begin{equation}\label{eq:syspaction0}
(D_t-if(t,x,D))v_{k,\tau}=e^{i\tau w}\sum_{m=0}^M\tau^{-m}\psi_{k,m}
+\mathcal{O}(\tau^{(1-M)/2})
\end{equation}
where
\begin{equation*}
\psi_{k,m}=D_t\phi_{k,m}
-\sum_{1\le|\alpha|\le M} i\tau^{1-|\alpha|}f^{(\alpha)}(t,x,w_x')D_x^\alpha\phi_{k,m}.
\end{equation*}
If $\sigma_{F_0}(t,x,\xi)\sim \sum_{j=0}^\infty f_{-j}(t,x,\xi)$
where $f_{-j} =( f_{k\ell,-j} )$
are $J\times J$ matrices homogeneous of degree $-j$,
then we can use the homogeneity of $f_{-j}$ and apply~\cite[Lemma~26.4.16]{ho4} to obtain
\begin{equation*}
F_0 {}^t(v_{1,\tau},\ldots ,v_{J,\tau})  = e^{i\tau w}
A_\tau(t,x)+\mathcal{O}(\tau^{(1-M)/2})
\end{equation*}
where
\begin{equation}\label{eq:syspaction2}
A_\tau(t,x)=
\left( \begin{array}{c}
\sum_{j,\ell,m,\alpha}
\tau^{-j-|\alpha|-m}
f_{1\ell,-j}^{(\alpha)}(t,x, w_{x}' ) D_{x}^\alpha
\phi_{\ell,m}/\alpha !  \\
\vdots \\
\sum_{j,\ell,m,\alpha}
\tau^{-j-|\alpha|-m}
f_{J\ell,-j}^{(\alpha)}(t,x, w_{x}' ) D_{x}^\alpha
\phi_{\ell,m}/\alpha !   \end{array} \right)
\end{equation}
and the sum is taken over $1\le \ell \le J$ and all
$0\le j\le M'$,
$0\le m\le M$, $|\alpha|<M-1-2j$ for some sufficiently large $M'$
(see equation $(4.21)$ in~\cite{jw}).
In \eqref{eq:syspaction2} we should replace
$f_{k\ell,-j}^{(\alpha)}(t,x, w_{x}' )$ by a Taylor expansion
at $\eta(t)$ as above.
Hence equations \eqref{eq:syspaction0}--\eqref{eq:syspaction2}
imply that
\[
P_{11}^\ast {}^t(v_{1,\tau},\ldots ,v_{J,\tau})=
e^{i\tau w} \left( \begin{array}{c} \sum_{m=0}^M \tau^{-m}
\varPsi_{1,m}  \\
\vdots \\
\sum_{m=0}^M \tau^{-m}
\varPsi_{J,m}  \end{array} \right)+\mathcal{O}(\tau^{(1-M)/2}),
\]
where
\begin{equation*}
\varPsi_{k,m}=D_t\phi_{k,m}
-\sum_{|\alpha|=1}if^{(\alpha)}(t,x,w_x')D_x^\alpha\phi_{k,m}
+\sum_{\ell=1}^J f_{k\ell,0}(t,x, w_{x}') \phi_{\ell,m}
+R_{k,m}
\end{equation*}
with $R_{k,0}=0$ for $1\le k \le J$ and
$R_{k,m}$ determined by $\phi_{\ell,0},\ldots,\phi_{\ell,{m-1}}$,
$1\le \ell\le J$, for $m>0$.
Set
\begin{equation*}
\phi_{k,0}(t,x)=\sum_{|\alpha|<M}\phi_{k,0\alpha}(t)(x-y(t))^\alpha
\end{equation*}
where $y(t)$ is the $x$ coordinate of the curve $t\mapsto\gamma(t)$
in \eqref{criticalcurve} containing the point $\omega_0$.
Then
$\varPsi_{k,0}(t,x)=\mathcal{O}((x-y(t))^M)$ for $1\le k\le J$
if $\phi_{k,0\alpha}$ satisfy a certain linear system of ordinary differential
equations
\begin{equation}\label{ODE}
D_t\phi_{k,0\alpha}+\sum_{\stackrel{1\le \ell\le J}{|\beta|<M}}a_{k\ell,\alpha\beta}\phi_{\ell,0\beta}=0.
\end{equation}
Given any non-negative integer $m_0<M$, these equations may be solved so that, for example,
$D_{x}^\alpha\phi_{k,0}=0$ at $(t_0,y(t_0))$ for all
$|\alpha|\le m_0$ and $2\le k\le J$, while $D_{x}^\alpha\phi_{1,0}(t_0,y(t_0))=0$ for all
$|\alpha|\le m_0$ except for one index $\alpha_0$ with $|\alpha_0|=m_0$.
We may in the same way successively choose $\phi_{k,m}$ for $1\le k\le J$ so that
\[
\varPsi_{k,m}(t,x)=\mathcal{O}((x-y(t))^{M-2m})\quad\text{when }m<M/2.
\]
Using again the fact that a function of the form $\chi(t,x) e^{i\tau w}$ can be estimated by $\tau^{-\ell/2}$
if $\chi$ vanishes of order $\ell$ when $x=y(t)$, it follows that
if $M$ is chosen so that $(1-M)/2\le -N_0-n-\nu-\kappa$, then we obtain
$P^\ast V_\tau=\mathcal{O}(\tau^{-\nu-\kappa})$
in view of \eqref{actionformulaPstar}.
By the discussion in~\cite[p.~110]{ho4}) we conclude
that for any integer $\kappa$ we can
find a constant $C$ such that
\eqref{est:usewithlemma3} holds
if only $M=M(\kappa)$ is chosen sufficiently large.

Recall that $R^\ast$ is given by \eqref{eqnew:sysadjoint}, and let
the symbol of $R^\ast$ have the asymptotic expansion given by \eqref{eqnew:sysadjoint1}.
Since we will prove Theorem \ref{thm:sysmainthm1} by contradiction,
suppose that $\mathcal R_0=r_{-j_0 (\alpha_0)}^{ (\beta_0)}(\omega_0)$
is the first nonzero matrix with respect to the ordering $>_t$
given by~\cite[Definition~3.2]{jw}, where
\begin{equation}\label{eq:defofm0}
j_0+|\alpha_0|+|\beta_0|=m_0.
\end{equation}
Here $\omega_0=(t_0,y(t_0),0,\eta(t_0))$.
As mentioned above we will assume that
we have a nonzero entry in the first row and the first column in the
matrix $\mathcal R_0$.
Now let $H\in C_0^\infty(\mathbb{R}^n,\C)$, and
define $h_\tau:\mathbb{R}^n\rightarrow\C$ by
\[
h_\tau(t,x)= H(\tau (t-t_0),\tau (x-y(t))).
\]
With $\mathcal H_\tau:\mathbb{R}^n\rightarrow\C^N$ given by
$\mathcal H_\tau=\tau^{-N_0}(h_\tau,0)$ it follows by~\cite[Proposition~4.3]{jw}
that for $\tau\ge 1$ we have $\mathcal H_\tau\in
H_{(N_0)}(\mathbb{R}^n,
\C^N)$ and $\|\mathcal H_\tau\|_{(N_0)}\le C$ where the
constant depends on $H$ but not on $\tau$.
In fact, the proof shows that $\|\mathcal H_\tau\|_{(N_0)}\le C\tau^{-n/2}$
for $\tau\ge 1$ but this is not needed.
(If we have a nonzero entry on the $i$:th row in the matrix
$\mathcal R_0$, then choose $\mathcal H_\tau$ as above with $h_\tau$ on the $i$:th coordinate.)
Then
\begin{equation}\label{eq:integralsystem}
(R^\ast V_\tau, \overline{\mathcal H_\tau} )_{L^2(\R^n,\C^N)}
=\sum_{k=1}^J \tau^n
( R_{1k}v_{k,\tau},\overline{h_\tau}),
\end{equation}
where $(\phantom{i},\phantom{i})$ denotes the usual scalar product on ${L^2(\R^n,\C)}$,
and by Lemma \ref{lemrange1} applied to the system $R$ together with equations
\eqref{est:usewithlemma1}, \eqref{est:usewithlemma2} and \eqref{est:usewithlemma3},
the left-hand side can be estimated by $C_\kappa\tau^{-\kappa}$ for any $\kappa$.
As in the proof of~\cite[Theorem~2.21]{jw} we want to determine the limit of
\[
\tau^{m_0}(R^\ast V_\tau, \overline{\mathcal H_\tau} )_{L^2(\R^n,\C^N)}
\]
as $\tau\to\infty$ with $m_0$ given by \eqref{eq:defofm0},
and show that if the terms of the symbol of $R^\ast$ do
not all vanish of infinite order at $\omega_0$
then $H$ can be chosen so that this limit
is nonzero, which is the contradiction that proves the theorem.
For each integral in the right-hand side of \eqref{eq:integralsystem}
we can use~\cite[Lemma~26.4.16]{ho4} and homogeneity
to obtain an auxiliary formula
for \eqref{eq:integralsystem} as an asymptotic series
in $\tau$, where the coefficients consist among other things
of derivatives in $x$ of the amplitude functions $\phi_{k,m}$. After
the change of variables $(\tau (t-t_0),\tau (x-y(t)))\mapsto(t,x)$ we Taylor expand each
term in the asymptotic sum to sufficiently high order, and then sort the result in
declining homogeneity degree in $\tau$
(see equations (4.21)-(4.23) together with (4.33) in~\cite{jw}, and note
that there, $t_0$ is assumed to be $0$).
If $\pi:T^\ast(\R^n)\to\R^n$ is the projection onto
the base manifold, and we for $2\le k\le J$ choose $\phi_{k,0}$ to have vanishing Taylor coefficients
with respect to the $x$ variable
at $\pi(\omega_0)=(t_0,y(t_0))$ of sufficiently high order,
then in view of equation
(4.34) in~\cite{jw}
we see that the only contribution in \eqref{eq:integralsystem}
will come from $(Q_{11}v_{1,\tau}, \overline{h_\tau})$.
(If on the $i$:th row we have a nonzero entry in the $j$:th column
in $\mathcal R_0$, choose $\phi_{k,0}$ as above for all $k\ne j$.)
Since this reduces the situation to the scalar case,
the theorem follows by repeating the proof of~\cite[Theorem~2.21]{jw}.
\end{proof}

We now prove Theorem \ref{thm:sysmainthm2} using the same strategy
as the one used to prove Theorem \ref{thm:sysmainthm1}.

\begin{proof}[Proof of Theorem \ref{thm:sysmainthm2}]
We first construct approximate solutions to the equation
$P^\ast v=0$ concentrated near $\varGamma'=\{(0,\varepsilon_n)\}$.
As in the proof of Theorem \ref{thm:sysmainthm1} we can let
the approximate solutions
be vectors in $\C^J\times \{0\}\subset \C^N$,
and we will again let
each component be an approximate solution to a scalar problem of the same kind,
constructed this time as in~\cite[Section~3]{jw}.
Thus, for $1\le k\le J$ let $v_{k,\tau}\in C_0^\infty(\mathbb{R}^n,\C)$
be an approximate solution of the form
\[
v_{k,\tau}(x)=\phi_{k}(x)e^{i\tau w(x)}
\]
where
\begin{equation}\label{specialw}
w(x)=x_n+i(x_1^2+x_2^2+\ldots+x_{n-1}^2+(x_n+ix_1^2/2)^2)/2
\end{equation}
is a solution to $P^\ast w=0$
and $\phi_k\in C_0^\infty(\mathbb{R}^n,\C)$.
By the Cauchy-Kovalevsky theorem we can solve $D_1\phi_k-ix_1D_n\phi_k=0$ in a neighborhood of $0$
for any analytic initial data $\phi_k(0,x')=f_k(x')\in C^\omega(\mathbb{R}^{n-1},\C)$;
in particular we are free to specify the Taylor coefficients of $f_k(x')$ at $x'=0$. For
$1\le k\le J$ we
take $\phi_k$ to be such a solution. If need be
we can reduce the support of each $\phi_k$ by multiplying by a smooth cutoff function $\chi$
where $\chi$ is equal to $1$ in some
smaller neighborhood of $0$ so that $\chi\phi_k$ solves the equation there. We assume this to be done
and note that if the support of each $\phi_k$ is small enough then
\begin{equation*}
\im w(x)\geq|x|^2/4, \quad x\in \bigcup_k\supp\phi_k.
\end{equation*}
Since
\[
d \re w(x)=-x_1x_n d x_1 + (1-x_1^2/2) d x_n
\]
we may similarly assume that
$d \re w(x)\ne 0$ in $\supp\phi_k$, $1\le k\le J$.
If $V_\tau=\tau^{N_0+n}(v_{1,\tau},\ldots,v_{J,\tau},0)\in C_0^\infty(\mathbb{R}^n,\C^N)$,
then by~\cite[Lemma~26.4.15]{ho4} it follows that for any $\kappa>0$ there is a constant $C$
such that
\begin{align}
\label{eq:bound1}
\|V_\tau\|_{(-N_0-n-\kappa)} & \le C\tau^{-\kappa},\quad\tau>1,\\
\|A V_\tau\|_{(0)} & \le C\tau^{-\kappa},\quad\tau>1,
\label{eq:bound2}
\end{align}
if $A$ is a pseudodifferential operator with wave front set disjoint
from the cone generated by
\[
\{(x,w'(x)): x\in\bigcup_{k}\supp\phi_{k}, \im w(x)=0\}.
\]
Since
\[
P^\ast V_\tau=
{}^t\Big((D_1-ix_1D_n)(e^{i\tau w}\phi_1),\ldots,
(D_1-ix_1D_n)(e^{i\tau w}\phi_J),0,\ldots,0\Big)
\]
by construction, it follows that
\begin{equation}\label{eq:bound3}
\tau^{m}\|P^\ast V_\tau\|_{(\nu)}\to 0\quad\text{as }\tau\to\infty
\end{equation}
for any positive integers $m$ and $\nu$ by~\cite[Lemma~3.1]{jw}.

Now note that
if we write $t$ instead of $x_1$ and $x$ instead of $x'$, then the solution
$w$ to $(D_1-ix_1D_n)w=0$ given by \eqref{specialw} takes the form
\begin{equation}\label{eq:identifyspecialcase}
w(t,x)=i(t^2-t^4/4)/2+\langle x, (1-t^2/2)\xi^0\rangle+i|x|^2/2,
\end{equation}
where as usual $\xi^0=(0,\ldots,1)\in\mathbb{R}^{n-1}$.
Comparing this to the solution of the eiconal equation given by \eqref{formofsoln},
we see that \eqref{eq:identifyspecialcase} is the special case
$w_0(t)=i(t^2-t^4/4)/2$, $y(t)\equiv 0$, $\eta(t)=(1-t^2/2)\xi^0$ and
$w_\alpha(t)\equiv 0$
for $|\alpha|\ge 3$,
$w_{jk}(t)=i\delta_{jk}$
where $\delta_{jk}$ is the Kronecker $\delta$. 
Thus, $t\mapsto (t,y(t),0,\eta(t))$ is a curve through the point
$\varGamma'$. Having established the estimates
\eqref{eq:bound1}--\eqref{eq:bound3}, Theorem \ref{thm:sysmainthm2}
therefore follows if we repeat the end of the proof of
Theorem \ref{thm:sysmainthm1}.
We omit the details.
\end{proof}

In view of the construction of approximate solutions to $P^\ast v=0$
in the proof of Theorem \ref{thm:sysmainthm1}, we can now give a
short proof of Theorem \ref{thm:solv}.

\begin{proof}[Proof of Theorem \ref{thm:solv}]
Let $K$ be the cone generated by $\gamma(I)$
and recall that we only have to verify the theorem when $\gamma(I)$
is a minimal bicharacteristic interval, that is, when case (b) holds.
In view of Proposition \ref{prop.26.4.4} with $Q=\I_N$
we may assume that $P$ has the block form given Lemma \ref{lem:prep},
with the principal symbol of the $J\times J$ system $P_{11}$ satisfying
$\sigma(P_{11})(w)=\lambda(w)\I_J$ where $\lambda(w)$ is the section
of eigenvalues of $P$ given by Theorem \ref{thm:solv}.
In fact,
since the systems $A$ and $B$ in Lemma \ref{lem:prep} are
homogeneous and non-characteristic in a neighborhood of $\gamma(I)$,
we can find a microlocal parametrix $E$ of $AQB=AB$
such that
\[
W\! F(EAB-\I_N)\cap K= W\! F(ABE-\I_N)\cap K=\emptyset.
\]
Applying Proposition \ref{prop.26.4.4}
shows that $P$ is solvable at $K$ if and only if
the range of $AB$ is microlocally contained
in the range of $APB$ at $K$,
and using the existence of $E$
it is easy to see that the latter holds if and only if
$APB$ is solvable at $K$.
(Alternatively, the proof of~\cite[Proposition~26.4.4]{ho4}
immediately generalizes to a proof for a corresponding
result for square systems, so this could be
used in place of Proposition \ref{prop.26.4.4}.)
Keeping this observation in mind,
we can in view of
Definition \ref{dfn:minimal1} then use
Lemma \ref{sys:symbolvanishafterconjugation} in the appendix,
again with $Q=\I_N$,
to further reduce the proof to the case when $P\in\varPsi_{\mathrm{cl}}^1(\R^n)$,
$\lambda(x,\xi)=\xi_1+if(x,\xi')$ and
\[
\gamma(I)=\{(x_1,0,\varepsilon_n):x_1\in I\},
\]
where $f$ is real valued, homogeneous of degree $1$ and independent of $\xi_1$.
Since the normal form of $\lambda(x,\xi)$ is only valid in a neighborhood
of $\{(x_1,0,\varepsilon_n):x_1\in I\}$ we actually have to use a pseudodifferential
cutoff for this to hold, but this can be accomplished by adapting the
arguments in~\cite[pp.~107-108]{ho4}.

Note that $\gamma$ is a minimal bicharacteristic interval of $\lambda(w)$, so
in every neighborhood of $\gamma(I)$ there is a bicharacteristics of $\re\lambda=\xi_1$
along which $f$ changes sign from $-$ to $+$, and $f$ vanishes of infinite order on
$\gamma(I)$. Since we are assuming that $|I|>0$ there is a neighborhood of
$\gamma(I)$ where \eqref{eq:sysmainthm1} holds. This is all that is required
for us to repeat the construction of the approximate solutions to $P_{11}^\ast v=0$
from the proof of Theorem \ref{thm:sysmainthm1}, so let
$V_\tau=(v_{1,\tau},\ldots,v_{J,\tau},0)\in C_0^\infty(\R^n,\C^N)$
be the corresponding approximate solution to $P^\ast V=0$. Assume to reach a contradiction
that $P$ is solvable at the cone $K$ generated by $\gamma(I)$ and let $N_0$
be the integer given by Definition \ref{defrangesystems} with $Q=\I_N$.
If $A$ is the system given by Lemma \ref{lemrange1}
such that $W\! F(A)\cap K=\emptyset$,
concentrate $V_\tau$ so close to $\gamma$ so that $W\! F(A)$
does not meet the cone generated by \eqref{controlofwf}.
Note that $V_\tau$ differs from the approximate
solutions in the proof of Theorem \ref{thm:sysmainthm1} by a factor of $\tau^{-N_0-n}$.
In any case, equations
\eqref{est:usewithlemma1}, \eqref{est:usewithlemma2} and \eqref{est:usewithlemma3}
imply that $V_\tau$ can be constructed so that the right-hand side of \eqref{rangeeq1} is bounded
by $C\tau^{-\kappa}$ for any $\kappa$ if $\tau>1$.
Finally, by the discussion following \eqref{ODE} we can choose at least one
of the amplitude functions $\phi_{k,0}$ in the definition of the
$v_{k,\tau}$'s to be non-vanishing at an appropriately chosen point,
which by~\cite[Lemma~26.4.15]{ho4} implies that $\|V_\tau\|_{(-N_0)}\ge c\tau^{-n/2-N_0}$
for some $c>0$. Applying Lemma \ref{lemrange1} with $Q=\I_N$ we obtain a contradiction,
which completes the proof.
\end{proof}

\appendix

\section{}
\label{appendix1}
Here we prove a few results used in the main text, related to how the property
that all terms in the asymptotic expansion of the total symbol have vanishing
Taylor coefficients is affected by various operations.
Some of these results are straightforward generalizations of the
corresponding results for the scalar case, see~\cite[Appendix~A]{jw}.

\begin{lem}\label{sys:symbolvanishafterconjugation}
Suppose $X$ and $Y$ are two $C^\infty$ manifolds of the same dimension $n$.
Let $K\subset T^\ast(X)\smallsetminus 0$ and
$K'\subset T^\ast(Y)\smallsetminus 0$ be compactly based cones and let $\chi$ be a
homogeneous symplectomorphism from a conic neighborhood of $K'$ to one of $K$ such that
$\chi(K')=K$, and let $\varGamma$ be the graph of $\chi$.
Let $P\in\varPsi_{cl}^m(Y)$ be an $N\times N$ system of
properly supported classical pseudodifferential
operators in $Y$ of the form
\begin{equation}\label{eq:1stnormalforsystems2}
P=\left( \begin{array}{cc} P_{11} & 0\\
0 & P_{22} \end{array} \right)
\end{equation}
where the principal symbol of the $J\times J$ system $P_{11}$ is given by $\sigma(P_{11})=\lambda\mathrm{Id}_J$
for some scalar function $\lambda\in C^\infty(T^\ast(Y)\smallsetminus 0)$,
homogeneous of degree $m$,
and $P_{22}$ is an $(N-J)\times (N-J)$ system, elliptic in a conic neighborhood of $K'$.
Suppose that there exists a function $0\ne q\in C^\infty(T^\ast(Y)\smallsetminus 0)$
such that
\[
(\chi^{-1})^\ast (q\lambda)=\xi_1+if(x,\xi').
\]
Then one can find $N\times N$ systems
$A\in I_\mathrm{cl}^{1-m}(X\times Y, \varGamma')$ and $B\in I_\mathrm{cl}^{0}(Y\times X,(\varGamma^{-1})')$
of properly supported Fourier integral operators such that
\begin{itemize}
\item[(i)] $A$ and $B$ are
non-characteristic at the restriction of the graphs of $\chi$ and $\chi^{-1}$ to $K'$ and to $K$ respectively,
while $W\! F'(A)$ and $W\! F'(B)$ are contained in small conic neighborhoods,
\item[(ii)] $APB\in\varPsi_\mathrm{cl}^1(X)$ has the form \eqref{eq:1stnormalforsystems2}
with $P_{jj}$ replaced by $\tilde{P}_{jj}$ for $j=1,2$,
where $\sigma(\tilde{P}_{11})=(\xi_1+if(x,\xi'))\mathrm{Id}_J$
and $\tilde{P}_{22}$ is elliptic in a conic neighborhood of $K$.
\end{itemize}
Moreover, if $R$ is an $N\times N$ system of
properly supported classical pseudodifferential
operators in $Y$, then each term in the asymptotic expansion of the symbol of $R$
has vanishing Taylor coefficients at a point $(y,\eta)\in K'$ if and only if
each term in the asymptotic expansion of the symbol of the pseudodifferential operator
$ARB$ in $X$ has vanishing Taylor coefficients at $\chi(y,\eta)\in K$.
\end{lem}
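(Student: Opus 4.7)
The plan is to split the lemma into its two distinct assertions and handle them in sequence. The first assertion (existence of the Fourier integral operators achieving the normal form) will be obtained by reducing to the scalar theory via the block structure of $P$. The second assertion (invariance of the vanishing Taylor coefficient property under $R \mapsto ARB$) will follow from an Egorov-type analysis, using that $A$ and $B$ admit microlocal parametrices.

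For the first part, I would begin by invoking the scalar conjugation machinery used in the proof of Theorem~26.4.13 in~\cite{ho4} (compare also~\cite[Theorem~21.3.6]{ho3}). Namely, since $q\lambda$ is scalar and of principal type with $(\chi^{-1})^*(q\lambda) = \xi_1 + if(x,\xi')$, there exist properly supported scalar Fourier integral operators $A_0 \in I_{\mathrm{cl}}^{1-m}(X \times Y, \varGamma')$ and $B_0 \in I_{\mathrm{cl}}^0(Y \times X, (\varGamma^{-1})')$, both non-characteristic at the relevant graphs, together with a properly supported scalar pseudodifferential operator $Q$ with principal symbol $q$ (elliptic near $K'$), such that $A_0\, QP_{11}\, B_0$ is a scalar operator of order $1$ with principal symbol $\xi_1 + if(x,\xi')$. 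I would then set
\[
A = \begin{pmatrix} A_0 Q \I_J & 0 \\ 0 & A_0 \I_{N-J} \end{pmatrix}, \qquad B = \begin{pmatrix} B_0 \I_J & 0 \\ 0 & B_0 \I_{N-J} \end{pmatrix},
\]
and verify by direct composition (using that $P$ is block diagonal) that $APB$ preserves the block diagonal form, has $\sigma(\tilde{P}_{11}) = (\xi_1 + if(x,\xi')) \I_J$, and has $\tilde{P}_{22} = A_0 P_{22} B_0$ elliptic in a conic neighborhood of $K$ since $P_{22}$ is elliptic near $K'$ and $A_0, B_0$ are non-characteristic. The non-characteristic condition on $A$ and $B$ follows from the corresponding properties of $A_0, B_0, Q$ and the invertibility of $\I_J, \I_{N-J}$.

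For the second part, the strategy is to exploit that $A$ and $B$ are non-characteristic at the relevant graph restrictions, so there exist properly supported $N \times N$ systems $A' \in I_{\mathrm{cl}}^{m-1}(Y \times X, (\varGamma^{-1})')$ and $B' \in I_{\mathrm{cl}}^0(X \times Y, \varGamma')$ of FIOs with $W\!F'(A'A - \I_N) \cap (K' \times K') = \emptyset$ and $W\!F'(BB' - \I_N) \cap (K' \times K') = \emptyset$, and symmetrically at $K$. Then $A'(ARB)B' \equiv R$ microlocally near $K'$, so it suffices to establish the implication in one direction. By Egorov's theorem (see~\cite[Theorem~25.3.5]{ho4}) applied entrywise to the matrix symbols, the total symbol of $ARB$ at $\chi(y,\eta) \in K$ admits an asymptotic expansion whose terms are polynomial expressions in the principal symbols of $A$ and $B$, their derivatives, and the pullbacks $(r_j \circ \chi^{-1})$ of the homogeneous components of $\sigma_R$, together with their derivatives, all evaluated at $\chi(y,\eta)$. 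Since $\chi$ is a smooth diffeomorphism, the property that all Taylor coefficients of $r_j$ vanish at $(y,\eta)$ transfers verbatim to vanishing of all Taylor coefficients of each $r_j \circ \chi^{-1}$ (and its derivatives) at $\chi(y,\eta)$, and hence to each term in the symbol expansion of $ARB$.

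The main obstacle I anticipate is bookkeeping in the second part: one must verify that the Egorov expansion really is organized so that each homogeneous term in $\sigma_{ARB}$ depends only on finitely many $r_j$ and their derivatives at $(y,\eta)$, rather than at nearby points, so that the pointwise vanishing assumption is sufficient. This is standard but tedious; the cleanest way to handle it is to note that the composition formulas for FIO--$\Psi$DO--FIO products give a stationary-phase asymptotic expansion in which all derivatives of the $r_j$ are evaluated at the single point corresponding to the canonical relation, precisely so that vanishing of all Taylor coefficients of $\sigma_R$ at $(y,\eta)$ forces vanishing of all Taylor coefficients of $\sigma_{ARB}$ at $\chi(y,\eta)$. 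The construction in the first part is comparatively routine once the scalar theory is invoked.
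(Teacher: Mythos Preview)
Your approach is correct in outline and matches the paper's strategy for both parts, but there is one simplification in the paper worth noting and one order-accounting slip in your construction worth flagging.

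For the first assertion, the paper does not introduce a separate pseudodifferential factor $Q$. Instead it chooses scalar Fourier integral operators $A\in I_{\mathrm{cl}}^{1-m}$ and $B\in I_{\mathrm{cl}}^{0}$ so that the principal symbol of the pseudodifferential composition $BA\in\varPsi^{1-m}$ equals $q$ near $K'$, and then simply sets $\tilde A=A\I_N$, $\tilde B=B\I_N$. Egorov's theorem gives $\sigma(AP_{11}B)=((\chi^{-1})^\ast(q\lambda))\I_J=(\xi_1+if)\I_J$ directly, and $AP_{22}B$ is elliptic of order $1$. Your block-diagonal construction with $A_0Q\,\I_J$ in the upper block and $A_0\I_{N-J}$ in the lower block runs into an inconsistency: since $q\lambda$ is homogeneous of degree $1$, the function $q$ has degree $1-m$, so $Q\in\varPsi^{1-m}$; combined with your stated $A_0\in I^{1-m}$ this gives $A_0Q\in I^{2(1-m)}$, not $I^{1-m}$, and the two diagonal blocks of your $A$ then have different orders unless $m=1$. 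This is easily repaired (absorb $q$ into the principal symbol of $B_0A_0$ as the paper does, or take $A_0$ of order $0$ and insert elliptic factors), but the uniform $A\I_N$ construction avoids the issue entirely and also keeps $ARB$ in the simple entrywise form $(AR_{jk}B)$ needed for the second assertion.

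For the second assertion you and the paper proceed identically: construct microlocal parametrices $A',B'$ so that $A'(ARB)B'\equiv R$ mod $\varPsi^{-\infty}$ near $K'$, reducing to one direction, and then argue entrywise. The paper cites the scalar case \cite[Lemma~A.1]{jw} for each entry $AR_{jk}B$; your Egorov-expansion sketch is exactly the content of that lemma.
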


\begin{proof}
Let $P_{11}=(Q_{jk})$
and choose any properly supported scalar
Fourier integral operators
$A\in I_\mathrm{cl}^{1-m}(X\times Y, \varGamma')$ and $B\in I_\mathrm{cl}^{0}(Y\times X,(\varGamma^{-1})')$
such that the principal symbol of $BA$ is equal to $q$ in a conic neighborhood $\varOmega$ of
$K'$. Since $q\ne 0$ we find that $A$ and $B$ are non-characteristic
at the restriction of the graphs of $\chi$ and $\chi^{-1}$ to $K'$ and to $K$ respectively.
We may choose $A$ and $B$ such that
$W\! F'(A)$ and $W\! F'(B)$ are contained in small conic neighborhoods.
Since $\sigma(P_{11})=\lambda\mathrm{Id}_J$ it follows that the principal symbol of
$AQ_{kk}B$ is equal to $\xi_1+if(x,\xi')$
in a neighborhood of $K$ for $1\le k\le J$.

Now choose $A'\in I_\mathrm{cl}^{0}(X\times Y, \varGamma')$
and $B'\in I_\mathrm{cl}^{m-1}(Y\times X,(\varGamma^{-1})')$
properly supported and such that
\begin{align*}
K\cap W\! F(AB'-\mathrm{Id})& = \emptyset,  & K'  \cap W\! F(B'A-\mathrm{Id}) = \emptyset, \\
K\cap W\! F(A'B-\mathrm{Id})& = \emptyset,  & K'  \cap W\! F(BA'-\mathrm{Id}) = \emptyset.
\end{align*}
Naturally, these conditions continue to hold with $\I$ replaced by $\I_N$
if $A$ is replaced by $A\I_N$, and $A'$, $B$ and $B'$ are similarly replaced
by $N\times N$ systems.
The systems $\tilde A=A\I_N$ and $\tilde B=B\I_N$ thus constructed satisfy (i),
and it is also clear that (ii) holds.
Moreover, if $R=(R_{jk})$ is an $N\times N$ system of
properly supported classical pseudodifferential
operators in $Y$ such that each term in the asymptotic expansion of the symbol of $R$
has vanishing Taylor coefficients at a point $(y,\eta)\in K'$,
then each term in the asymptotic expansion of the symbol of $\tilde AR\tilde B=(AR_{jk}B)$ has
vanishing Taylor coefficients at the point $\chi(y,\eta)\in K$
by~\cite[Lemma~A.1]{jw}
applied to $AR_{jk}B$ for $j,k=1,\ldots, N$.
Conversely, if each term in the asymptotic expansion of the symbol of $\tilde AR\tilde B$
has vanishing Taylor coefficients at a point $\chi(y,\eta)\in K$,
then the same argument shows that
each term in the asymptotic expansion of the symbol of $\tilde B'\tilde AR\tilde B\tilde A'$
has vanishing Taylor coefficients at the point $(y,\eta)\in K'$,
where $\tilde A'=A'\I_N$ and $\tilde B'=B'\I_N$.
Since $\tilde B'\tilde AR\tilde B\tilde A'\equiv R$ mod $\varPsi^{-\infty}$ near $K'$,
this completes the proof.
\end{proof}

Let $\{e_k : \ k=1,\ldots, n\}$ be a basis for $\mathbb{R}^n$, let $(U,x)$ be local coordinates on
a smooth manifold $X$ of dimension $n$, and let
\[
\Big\{\frac{\partial}{\partial x_k}:\ k=1,\ldots,n\Big\}
\]
be the induced local frame for the tangent bundle $TX$. 
For a matrix valued function $f\in C^\infty (U,\mathcal{L}_N)$
we can  
use standard multi-index notation to express the partial derivatives of $f$
since the local frame fields commute.
If $\alpha\in\mathbb{N}^n$ is a multi-index we shall by
$\partial_x^\alpha f(\gamma)$ denote the matrix
$(\partial_x^\alpha f_{i j}(\gamma))$ if
$f(\gamma)=(f_{i j}(\gamma))$.

\begin{lem}\label{appthm19}
Let $X$ be a smooth manifold of dimension $n$, and
for $j\ge 1$ let $p,q_j,g_j\in C^\infty(X)$
be $N\times N$ systems.
Let $\{\gamma_j\}_{j=1}^\infty$ be a sequence in $X$
such that $\gamma_j\to\gamma$ as $j\to\infty$, and assume that
$p(\gamma)=p(\gamma_j)=0$ for all $j$.
Assume also that $p$ is of principal type at $\gamma$, that is, there exists a
tangent vector $\partial_\nu\in T_\gamma X$ such that
\[
\partial_\nu p(\gamma): \kernel p(\gamma)\longrightarrow \cokernel p(\gamma)=\C^N/\ran p(\gamma)
\]
is bijective. Let $(U,x)$ be local coordinates on $X$ near $\gamma$,
and suppose that there exists an $N\times N$ system $q\in C^\infty(X)$
such that
\[
\partial_x^\alpha q(\gamma)
=\lim_{j\to\infty} \partial_x^\alpha q_j(\gamma_j)
\]
for all $\alpha\in\mathbb{N}^n$.
If $q_j-pg_j$ vanishes of infinite order at $\gamma_j$ for all $j$, then there
exists an $N\times N$ system $g\in C^\infty(X)$
such that $q-pg$ vanishes of infinite order at $\gamma$.
Furthermore,
\begin{equation}\label{eq:app20}
\partial_x^\alpha g(\gamma)
=\lim_{j\to\infty} \partial_x^\alpha g_j(\gamma_j)
\end{equation}
for all $\alpha\in\mathbb{N}^n$.
\end{lem}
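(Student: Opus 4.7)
The plan is to produce $g$ via Borel's theorem by prescribing its Taylor expansion at $\gamma$ to be the limit of those of $g_j$ at $\gamma_j$; the main work is to verify that these limits actually exist. Since $p(\gamma) = 0$ we have $\kernel p(\gamma) = \C^N$ and $\cokernel p(\gamma) = \C^N$, so the principal type hypothesis collapses to the statement that $\partial_\nu p(\gamma)$ is an invertible $N\times N$ matrix for some tangent vector $\partial_\nu \in T_\gamma X$. Invertibility is an open condition and $\gamma_j \to \gamma$, so after discarding finitely many indices we may assume $\partial_\nu p(\gamma_j)$ is invertible for every $j$. Since convergence of Taylor coefficients in one smooth coordinate system is equivalent, via the chain rule, to convergence in any other, we may further change coordinates so that $\partial_\nu = \partial_{x_1}$; in these coordinates $\partial_{x_1} p(\gamma_j)$ is invertible and converges to $\partial_{x_1} p(\gamma)$.

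I claim that $\lim_{j\to\infty} \partial_x^\alpha g_j(\gamma_j)$ exists for every $\alpha \in \N^n$. Applying Leibniz to the identity $\partial^{\alpha + e_1}(q_j - p g_j)(\gamma_j) = 0$ and using $p(\gamma_j) = 0$ yields
\[
\partial^{\alpha + e_1} q_j(\gamma_j) = (\alpha_1 + 1)\, \partial_{x_1} p(\gamma_j)\, \partial^\alpha g_j(\gamma_j) + \sum_{k \ne 1,\, \alpha_k \ge 1} \alpha_k\, \partial_{x_k} p(\gamma_j)\, \partial^{\alpha + e_1 - e_k} g_j(\gamma_j) + L_j^\alpha,
\]
where $L_j^\alpha$ is a polynomial in certain $\partial^\beta p(\gamma_j)$ and in derivatives $\partial^\mu g_j(\gamma_j)$ with $|\mu| \le |\alpha| - 1$. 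Proceed by an outer induction on $m = |\alpha|$ and, within each $m$, by a reverse inner induction on $\alpha_1$ starting from $\alpha = m e_1$, for which the middle sum is empty. By the outer hypothesis $L_j^\alpha$ converges; by the inner hypothesis each term $\partial^{\alpha + e_1 - e_k} g_j(\gamma_j)$ with $k \neq 1$ converges, since its first component $\alpha_1 + 1$ strictly exceeds $\alpha_1$; and $\partial^{\alpha + e_1} q_j(\gamma_j) \to \partial^{\alpha + e_1} q(\gamma)$ by assumption. Inverting $\partial_{x_1} p(\gamma_j)$ then produces the desired limit.

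Writing $g^\alpha$ for these limits, Borel's theorem furnishes $g \in C^\infty(X)$ with $\partial_x^\alpha g(\gamma) = g^\alpha$, which already delivers \eqref{eq:app20}. Taking the limit $j \to \infty$ in the Leibniz expansion
\[
\partial^\alpha q_j(\gamma_j) = \sum_{\beta \le \alpha} \binom{\alpha}{\beta}\, \partial^\beta p(\gamma_j)\, \partial^{\alpha - \beta} g_j(\gamma_j),
\]
itself a consequence of $\partial^\alpha(q_j - p g_j)(\gamma_j) = 0$, gives $\partial^\alpha q(\gamma) = \partial^\alpha(pg)(\gamma)$ for every $\alpha$, so $q - pg$ vanishes of infinite order at $\gamma$. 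The main obstacle is the coupling of mixed partial derivatives in the inductive step: the equation of order $\alpha + e_1$ involves several derivatives of $g_j$ of order $|\alpha|$ simultaneously, but organizing the inner induction to resolve those with larger first component $\alpha_1$ first ensures that only $\partial^\alpha g_j$ remains genuinely unknown at each step.
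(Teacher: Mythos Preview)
Your proof is correct and follows essentially the same strategy as the paper's: differentiate $\partial^{\alpha+e_1}(q_j-pg_j)(\gamma_j)=0$, use $p(\gamma_j)=0$ to kill the highest-order term in $g_j$, and solve for $\partial^\alpha g_j(\gamma_j)$ via the invertibility of $\partial_{x_1}p(\gamma_j)$, then invoke Borel. The only difference is organizational: the paper introduces a total well-ordering on multi-indices (graded reverse lexicographic with $x_n>\cdots>x_1$) and argues by contradiction, whereas you run an explicit double induction, outer on $|\alpha|$ and inner descending on $\alpha_1$. Your scheme is in fact a coarsening of the paper's ordering that happens to suffice, since the only same-degree derivatives of $g_j$ appearing alongside $\partial^\alpha g_j$ in the key identity are those of the form $\partial^{\alpha+e_1-e_k}g_j$ with $k\ne 1$, all of which have strictly larger first component; no further tie-breaking is ever needed.
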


\noindent Note that in view of Borel's theorem,
the assumption concerning the existence of $q$
is equivalent to assuming that all the limits
$\lim_{j\to\infty} \partial_x^\alpha q_j(\gamma_j)$ exist.

\begin{proof}
First note that although the result is stated for a manifold,
it is purely local so we may assume that $X\subset\mathbb{R}^n$ in the proof.
Next we observe
that $p(\gamma)=0$ implies that $\kernel p(\gamma)=\C^N
=\cokernel p(\gamma)$.
Thus $\partial_\nu p(\gamma)$ is invertible, so $|\partial_\nu p(\gamma)|\ne 0$
which means we can find a neighborhood
$\mathcal{U}$
of $\gamma$ where $|\partial_\nu p(\gamma)|\ne 0$. Hence the matrix valued function $\partial_\nu p(w)$ is
invertible in $\mathcal{U}$, and we let $(\partial_{\nu} p(w))^{-1}$ denote its inverse.
By Cramer's rule it follows that $(\partial_{\nu} p)^{-1}$ is $C^\infty$ in $\mathcal{U}$.
We may without loss of generality assume
that $\gamma_j\in\mathcal{U}$ for $j\ge 1$.

Moreover, we have that $\partial_{\lambda\nu} p(\gamma)=\lambda\partial_\nu p(\gamma)$
is invertible for any $0\ne\lambda\in\mathbb{R}$ so we may assume that $\nu$ as a vector in $\mathbb{R}^n$
has length $1$.
(We will identify a tangent vector $\nu\in\mathbb{R}^n$ at $\gamma$ with
$\partial_\nu\in T_\gamma \mathbb{R}^n$
through the usual vector space isomorphism.)
By an orthonormal change of coordinates we may even assume that
$\partial_\nu p(w)=\partial_{e_1}p(w)$.
In accordance with the notation used in the statement of the lemma, we shall
write $\partial_{x_k}p(w)$ for the partial derivatives $\partial_{e_k}p(w)$
and denote by $(\partial_{x_1} p(w))^{-1}$ the inverse of $\partial_{\nu} p(w)=\partial_{x_1} p(w)$
in $\mathcal{U}$.
Now
\begin{equation}\label{eq:app20.5}
0=\partial_{x_1} (q_j-pg_j)(\gamma_j)=\partial_{x_1} q_j(\gamma_j) -  \partial_{x_1} p(\gamma_j) g_j(\gamma_j)
\end{equation}
for all $j$ since $p(\gamma_j)=0$. Since $\lim_j \partial_{x_1} q_j(\gamma_j)=\partial_{x_1} q(\gamma)$
by assumption, equation \eqref{eq:app20.5} yields
\begin{equation*}
\lim_{j\to\infty} g_j(\gamma_j)=(\partial_{x_1} p(\gamma))^{-1}\partial_{x_1} q(\gamma)=a\in\mathcal{L}_N,
\end{equation*}
and we claim that we can in the same way determine
\[
\lim_{j\to\infty} \partial_x^\alpha g_j(\gamma_j)=a_{(\alpha)}\in\mathcal{L}_N
\]
for any $\alpha\in\mathbb{N}^n$.
In fact, arguing by contradiction, we introduce a total
well-ordering of the derivatives $\partial_x^\alpha$ by means of
a monomial ordering of the corresponding monomials $x^\alpha$.
We choose the graded reverse lexiographic order $>_{grevlex}$ together
with the (non-conventional) ordering $x_n>\ldots>x_1$ of
the variables. That is to say, to determine if $\partial_x^\alpha>_{grevlex}\partial_x^\beta$
for multi-indices $\alpha,\beta\in\N^n$,
we first compare the total lengths $|\alpha|$ and $|\beta|$, and in
case of equality compare the left-most entries $\alpha_1$ and $\beta_1$, but
reversing the outcome so that the multi-index with the smaller entry yields
a larger derivative in the ordering. In case of a tie this is followed by a similar
comparison of the second entries from the left and so forth, ending
with a comparison of the right-most entries. This will then lead to
the ordering $x_n>\ldots>x_1$ of the variables, in the sense that the
tangent vectors are ordered $\partial_{x_n}>_{grevlex}\ldots>_{grevlex}\partial_{x_1}$.
Suppose now that $\partial_x^\alpha$ is the first derivative such that
the limit of $\partial_x^\alpha g_j(\gamma_j)$ does not exist as $j\to\infty$.
Let $\varepsilon_k$ be the $k$:th basis vector in $\R^n$, and consider
the limit of $\partial_x^{\alpha+\varepsilon_1}(q_j-pg_j)(\gamma_j)$
as $j\to\infty$. By Leibniz's formula we have
\[
\partial_x^{\alpha+\varepsilon_1}(q_j-pg_j)
=\partial_x^{\alpha+\varepsilon_1}q_j-
p\partial_x^{\alpha+\varepsilon_1}g_j-
\partial_{x_1}p\partial_x^\alpha g_j-
\sum_{\{\beta:\beta<\alpha\}}\binom{\alpha}{\beta}\partial_{x_1}(\partial_x^{\alpha-\beta}p\partial_x^{\beta}g_j).
\]
Note that if $\alpha=(\alpha_1,\ldots,\alpha_n)$ and $\alpha_1\ge 1$,
then the sum over $\beta$ in the right-hand side contains an additional term of the form
$\alpha_1\partial_{x_1}p\partial_x^\alpha g_j$, produced by the value $\beta=\alpha-\varepsilon_1$.
Evaluating at $\gamma_j$ we find that the left-hand side converges to $0$ as $j\to\infty$
by assumption, and since $p(\gamma_j)=0$ it follows from our choice of ordering
that with the exception of the term $(\alpha_1+1)\partial_{x_1}p\partial_x^\alpha g_j$, all
other terms have well-defined limits as $j\to\infty$. Arguing as in the discussion
following \eqref{eq:app20.5}, we can therefore determine the limit of $\partial_x^\alpha g_j(\gamma_j)$
as $j\to\infty$ by multiplying with $(\alpha_1+1)^{-1}(\partial_{x_1} p(\gamma_j))^{-1}$ from the left.
This contradiction proves the claim.

By using Borel's theorem for each entry it is clear that there exists a
matrix valued function $g\in C^\infty(X,\mathcal{L}_N)$ such that
\begin{equation*}
\partial_x^\alpha g(\gamma)=a_{(\alpha)}
=\lim_{j\to\infty} \partial_x^\alpha g_j(\gamma_j)
\end{equation*}
for all $\alpha\in\mathbb{N}^n$. Since $q-pg$ vanishes of infinite order at $\gamma$ by
construction, this completes the proof.
\end{proof}

Keeping the notation from Lemma \ref{appthm19},
there is naturally an analogue result if $p$ is an elliptic system.
In fact, very little has to be changed for the proof to work in
this setting: we essentially just replace $w\mapsto (\partial_{\nu} p(w))^{-1}$
with the inverse $w\mapsto p(w)^{-1}$ of $p$. The ordering used in the proof
can be the same; the only feature needed in this case is that it is a graded
ordering. The result is stated below
for easy reference. We omit the proof.

\begin{lem}\label{appthm19elliptic}
Let $X$ be a smooth manifold of dimension $n$, and
for $j\ge 1$ let $p,q_j,g_j\in C^\infty(X)$
be $N\times N$ systems.
Let $\{\gamma_j\}_{j=1}^\infty$ be a sequence in $X$
such that $\gamma_j\to\gamma$ as $j\to\infty$, and assume that
$|p(\gamma)|$ and $|p(\gamma_j)|$ are non-vanishing for all $j$,
where $|p|$ is the determinant of $p$.
Let $(U,x)$ be local coordinates on $X$ near $\gamma$,
and suppose that there exists an $N\times N$ system $q\in C^\infty(X)$
such that
\[
\partial_x^\alpha q(\gamma)
=\lim_{j\to\infty} \partial_x^\alpha q_j(\gamma_j)
\]
for all $\alpha\in\mathbb{N}^n$.
If $q_j-pg_j$ vanishes of infinite order at $\gamma_j$ for all $j$, then there
exists an $N\times N$ system $g\in C^\infty(X)$
such that $q-pg$ vanishes of infinite order at $\gamma$.
Furthermore,
\begin{equation*}
\partial_x^\alpha g(\gamma)
=\lim_{j\to\infty} \partial_x^\alpha g_j(\gamma_j)
\end{equation*}
for all $\alpha\in\mathbb{N}^n$.
\end{lem}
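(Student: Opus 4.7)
My plan is to adapt the proof of Lemma \ref{appthm19} almost verbatim, with the key simplification that invertibility of $p$ itself at $\gamma$ (and hence in a neighborhood) removes the need to differentiate once in a distinguished direction before inverting. Concretely, the first step is to observe that since $|p(\gamma)|\ne 0$, Cramer's rule gives a smooth inverse $w\mapsto p(w)^{-1}\in C^\infty(\mathcal U,\mathcal L_N)$ on some neighborhood $\mathcal U$ of $\gamma$, and by discarding finitely many terms we may assume $\gamma_j\in\mathcal U$ for all $j$. Evaluating $q_j-pg_j=0$ at $\gamma_j$ and multiplying by $p(\gamma_j)^{-1}$ from the left gives
\[
g_j(\gamma_j)=p(\gamma_j)^{-1}q_j(\gamma_j)\longrightarrow p(\gamma)^{-1}q(\gamma)=:a_{(0)}\in\mathcal L_N
\]
as $j\to\infty$, using the hypothesis that $\partial_x^\alpha q_j(\gamma_j)\to\partial_x^\alpha q(\gamma)$ for every $\alpha$.

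Next I would prove by induction on $|\alpha|$ that
\[
\lim_{j\to\infty}\partial_x^\alpha g_j(\gamma_j)=:a_{(\alpha)}\in\mathcal L_N
\]
exists for every $\alpha\in\mathbb N^n$. Totally order the derivatives by any graded monomial ordering (graded lexicographic will do), so that strictly smaller derivatives correspond to strictly smaller $|\alpha|$ or to equal length with earlier lex position. Suppose the limits $a_{(\beta)}$ exist for all $\beta$ preceding $\alpha$ in this ordering. Applying $\partial_x^\alpha$ to $q_j-pg_j$, evaluating at $\gamma_j$, and using Leibniz's formula yields
\[
0=\partial_x^\alpha q_j(\gamma_j)-p(\gamma_j)\partial_x^\alpha g_j(\gamma_j)-\sum_{0<\beta\le\alpha}\binom{\alpha}{\beta}\partial_x^\beta p(\gamma_j)\,\partial_x^{\alpha-\beta}g_j(\gamma_j).
\]
Each term in the sum involves a derivative $\partial_x^{\alpha-\beta}g_j$ of strictly smaller order than $\alpha$, hence has a well-defined limit by the inductive hypothesis; $\partial_x^\alpha q_j(\gamma_j)$ converges to $\partial_x^\alpha q(\gamma)$ by assumption; so multiplying by $p(\gamma_j)^{-1}$ from the left (which converges to $p(\gamma)^{-1}$) exhibits the limit of $\partial_x^\alpha g_j(\gamma_j)$, completing the induction.

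Finally, Borel's theorem (applied entry-wise to the matrix) produces $g\in C^\infty(X,\mathcal L_N)$ with $\partial_x^\alpha g(\gamma)=a_{(\alpha)}$ for every $\alpha\in\mathbb N^n$. Then $q-pg\in C^\infty$ and by construction $\partial_x^\alpha(q-pg)(\gamma)=\lim_{j\to\infty}\partial_x^\alpha(q_j-pg_j)(\gamma_j)=0$ for every $\alpha$, i.e.\ $q-pg$ vanishes of infinite order at $\gamma$, with \eqref{eq:app20} satisfied. Since the induction is driven purely by invertibility of $p(\gamma_j)$ and convergence of the data, no extra obstacle arises; the proof of Lemma \ref{appthm19} was actually the harder case because $p(\gamma_j)=0$ forced an extra differentiation in the distinguished direction $\partial_\nu$ and the use of a non-standard ordering, neither of which is needed here. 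The only point demanding any care is choosing a graded ordering so that the Leibniz expansion at each inductive step involves only strictly earlier derivatives of $g_j$; any graded ordering works.
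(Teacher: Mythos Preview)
Your proposal is correct and follows precisely the approach the paper indicates: the paper omits the proof but remarks that one ``essentially just replace[s] $w\mapsto (\partial_{\nu} p(w))^{-1}$ with the inverse $w\mapsto p(w)^{-1}$ of $p$'' and that ``the only feature needed in this case is that it is a graded ordering,'' both of which you implement exactly. Your observation that plain induction on $|\alpha|$ suffices (since every lower-order term $\partial_x^{\alpha-\beta}g_j$ in the Leibniz expansion has strictly smaller total degree) is the content of the paper's remark about the graded ordering.
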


The method used in the proof of Lemma \ref{appthm19} can also
be applied to prove the following result for certain functions of
non-principal type. We only need
the result for scalar functions but combined with the
first part of the proof of Lemma \ref{appthm19},
the proof would work equally well for systems.

\begin{lem}\label{appthm19special}
Let $X$ be a smooth manifold of dimension $n$, and
let $\lambda$ and $e$ be scalar functions in $C^\infty(X)$.
Let $\gamma\in X$ and assume that
$\lambda(\gamma)=0$ and $d\lambda(\gamma)\ne 0$. If
$\lambda^me$ vanishes of infinite order
at $\gamma$ for some $m\ge 1$, then $e$
vanishes of infinite order at $\gamma$.
\end{lem}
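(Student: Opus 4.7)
The plan is to reduce the statement to a coordinate computation by straightening $\lambda$. Since $\lambda(\gamma) = 0$ and $d\lambda(\gamma)\ne 0$, the classical submersion theorem gives local coordinates $(x_1,\dots,x_n)$ centered at $\gamma$ (so $\gamma$ corresponds to the origin) in which $\lambda(x) = x_1$. The hypothesis that $\lambda^m e$ vanishes of infinite order at $\gamma$ becomes the statement that $x_1^m e(x)$ vanishes of infinite order at $0$, and the conclusion to be proved becomes $\partial_x^\gamma e(0) = 0$ for every multi-index $\gamma \in \N^n$.

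The key step is a direct Leibniz computation. For any $\alpha \in \N^n$,
\[
\partial_x^\alpha(x_1^m e)(x) = \sum_{\beta \le \alpha}\binom{\alpha}{\beta}\partial_x^\beta(x_1^m)\,\partial_x^{\alpha-\beta}e(x).
\]
Since $x_1^m$ depends only on $x_1$, $\partial_x^\beta(x_1^m)$ vanishes at $0$ unless $\beta_j=0$ for $j>1$ and $\beta_1=m$; in that single surviving case $\partial_x^\beta(x_1^m)(0)=m!$. Writing $\varepsilon_1=(1,0,\dots,0)$, the Leibniz sum therefore collapses at the origin to
\[
\partial_x^\alpha(x_1^m e)(0) = \binom{\alpha_1}{m} m!\,\partial_x^{\alpha - m\varepsilon_1}e(0),
\]
valid whenever $\alpha_1 \ge m$ (and giving $0$ trivially when $\alpha_1 < m$, which contains no information about $e$).

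To conclude, fix an arbitrary multi-index $\gamma\in\N^n$ and set $\alpha = \gamma + m\varepsilon_1$, so that $\alpha_1 = \gamma_1 + m \ge m$. The hypothesis that $x_1^m e$ vanishes of infinite order at $0$ gives $\partial_x^\alpha(x_1^m e)(0)=0$, so the displayed identity yields
\[
0 = \binom{\gamma_1+m}{m} m!\,\partial_x^{\gamma}e(0),
\]
and since the numerical coefficient is nonzero, $\partial_x^\gamma e(0)=0$. As $\gamma$ was arbitrary, $e$ vanishes of infinite order at $\gamma$. There is no serious obstacle: the only subtle point is the observation that the Leibniz sum collapses to a single term at the origin because $x_1^m$ has a unique nonvanishing derivative at $0$, which is why the hypothesis controls \emph{all} derivatives of $e$ and not merely those in the $x_1$ direction.
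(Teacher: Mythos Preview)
Your argument is clean and correct when $\lambda$ is real-valued, but in this paper ``scalar'' is used in contrast to ``system'' (i.e.\ matrix-valued), and the lemma is applied to the complex principal symbol of a pseudodifferential operator; so $\lambda$ must be allowed to be complex-valued. For complex $\lambda$ the straightening step fails: for instance $\lambda(x_1,x_2)=x_1+ix_2$ on $\mathbb{R}^2$ satisfies $\lambda(0)=0$ and $d\lambda(0)\ne 0$, yet no real change of coordinates turns $\lambda$ into a single coordinate function. Once the straightening is lost, $\partial_x^\beta(\lambda^m)|_0$ is no longer supported on a single multi-index $\beta$, and the Leibniz sum does not collapse to one term, so the final step of your argument breaks down.

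The paper's proof avoids this by never straightening $\lambda$: it only arranges that $\partial_{x_1}\lambda(\gamma)\ne 0$ (a nonzero complex number), then argues by contradiction, introducing a graded monomial ordering on the derivatives $\partial_x^\alpha$ to keep track of which terms in the Leibniz expansion of $\partial_{x_1}^m\partial_x^\alpha(\lambda^m e)$ can survive at $\gamma$. Your approach is easily repaired, however, and in fact admits a shorter fix than either argument: pass to formal Taylor series at $\gamma$ and note that $\mathbb{C}[[x_1,\ldots,x_n]]$ is an integral domain, so $\hat\lambda^m\hat e=0$ together with $\hat\lambda\ne 0$ (which follows from $d\lambda(\gamma)\ne 0$) forces $\hat e=0$.
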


\begin{proof}
As in the proof of Lemma \ref{appthm19} we conclude that since
the statement is local we may assume that $X\subset\mathbb{R}^n$
and $\partial\lambda(\gamma)/\partial x_1\ne 0$, where we use coordinates
$x_1,\ldots,x_n$ in $X$. Let $>_{grevlex}$ be the total well-ordering
of the derivatives $\partial_x^\alpha$ introduced in the proof of
Lemma \ref{appthm19}, and assume that $\partial_x^\alpha e(\gamma)$ is
the first derivate of $e$ that does not vanish at $\gamma$.
Let $\varepsilon_j$ be the $j$:th basis vector in $\R^n$, and consider
the derivative $\partial_{x_1}^m\partial_x^{\alpha}(\lambda^me)(\gamma)$.
By Leibniz's formula we have
\begin{align*}
\partial_{x_1}^m\partial_x^{\alpha}(\lambda^me)
&=\sum_{k=0}^m\binom{m}{k}\partial_{x_1}^k(\lambda^m)\partial_x^{\alpha+(m-k)\varepsilon_1} e\\
&\phantom{=}+\sum_{k=0}^m\binom{m}{k}\sum_{\{\beta:\beta<\alpha\}}\binom{\alpha}{\beta}
\partial_x^{\alpha-\beta+k\varepsilon_1}(\lambda^m) \partial_x^{\beta+(m-k)\varepsilon_1}e.
\end{align*}
In the first sum, all terms with $k<m$ vanish at $\gamma$ since $\lambda(\gamma)=0$,
so the only contribution we get is
$m!(\partial\lambda(\gamma)/\partial x_1)^m\partial_x^\alpha e(\gamma)$.
This also implies that all terms in the double sum with $|\beta|+m-k>|\alpha|$ vanish
at $\gamma$. Conversely, if $|\beta|+m-k<|\alpha|$ then
$\partial_x^{\beta+(m-k)\varepsilon_1}e(\gamma)=0$ since the ordering is graded.
When we have equality $|\beta|+m-k=|\alpha|$ in the double sum then $m-k\ge 1$
since $\beta<\alpha$, so we can write
\begin{equation}\label{betarepresentation}
\beta+(m-k)\varepsilon_1=\alpha-\sum_{\ell=1}^{m-k}\varepsilon_{j_\ell}+(m-k)\varepsilon_1
\end{equation}
for some $\varepsilon_{j_\ell}$ with $1\le {j_\ell}\le n$.
Unless the left-most entry $\alpha_1$ of $\alpha$ is $\ge m-k$
so that we can choose $\varepsilon_{j_\ell}=\varepsilon_1$
for all $1\le {j_\ell}\le n$ in \eqref{betarepresentation},
we thus have $\partial_x^{\alpha}>_{grevlex}\partial_x^{\beta+(m-k)\varepsilon_1}$
which by our assumptions implies that $\partial_x^{\beta+(m-k)\varepsilon_1}e(\gamma)=0$.
On the other hand, if $\beta+(m-k)\varepsilon_1=\alpha$ then
$\partial_x^{\alpha-\beta+k\varepsilon_1}(\lambda^m)=\partial_{x_1}^m(\lambda^m)$
so this produces another term of the form
$m!(\partial\lambda(\gamma)/\partial x_1)^m\partial_x^\alpha e(\gamma)$.
Hence
\[
0=\partial_{x_1}^m\partial_x^{\alpha}(\lambda^me)|_\gamma
= C(\partial\lambda(\gamma)/\partial x_1)^m\partial_x^\alpha e(\gamma)
\]
where $C$ is a positive constant depending only on $m$ and $\alpha$.
Thus the right-hand side is non-vanishing by our assumptions,
and this contradiction proves the claim.
\end{proof}

Lemma \ref{appthm19} will be used to prove the following result for homogeneous systems on the cotangent bundle.
First, recall that if $M$ is the map given by \eqref{def:radialmultiplier}, then
the radial vector field $\rho\in T(T^\ast(X)\smallsetminus 0)$ is defined by
\[
\rho f = \frac{d}{dt} M_t^\ast f|_{t=1}, \quad f\in C^\infty(T^\ast(X)\smallsetminus 0).
\]
In terms of local coordinates
we have $\rho(w)=\xi\partial_\xi$ if $w=(x,\xi)$, see the discussion
following~\cite[Definition~21.1.8]{ho3}.
Moreover, if $f$ is homogeneous of degree $\ell$, then differentiation gives $\rho f=\ell f$
by Euler's homogeneity relation.

\begin{prop}\label{appthm26}
For $j\ge 1$ let $p,q_j,g_j\in C^\infty(T^\ast (\mathbb{R}^n)\smallsetminus 0)$
be $N\times N$ systems, where
$p$ and $q_j$ are homogeneous of degree $m$ and $g_j$ is homogeneous of degree $0$.
Let $\{\gamma_j\}_{j=1}^\infty$ be a sequence in $T^\ast (\mathbb{R}^n)\smallsetminus 0$
such that $\gamma_j\to\gamma$ as $j\to\infty$, and assume that $p(\gamma)=p(\gamma_j)=0$ for all $j$.
Assume also that $p$ is of principal type at $\gamma$, that is, there exists a
tangent vector $\partial_\nu\in T_\gamma T^\ast(\mathbb{R}^n)$ such that
\[
\partial_\nu p(\gamma): \kernel p(\gamma)\longrightarrow \cokernel p(\gamma)=\C^N/\ran p(\gamma)
\]
is bijective.
If there exists an $N\times N$ system $q\in C^\infty(T^\ast(\mathbb{R})^n\smallsetminus 0)$,
homogeneous of degree $m$, such that
\[
\partial_x^\alpha\partial_\xi^\beta q(\gamma)
=\lim_{j\to\infty} \partial_x^\alpha\partial_\xi^\beta q_j(\gamma_j)
\]
for all $(\alpha,\beta)\in\mathbb{N}^n\times\mathbb{N}^n$,
and if $q_j-pg_j$ vanishes of infinite order at $\gamma_j$ for all $j$, then there
exists an $N\times N$ system $g\in C^\infty(T^\ast(\mathbb{R}^n)\smallsetminus 0)$, homogeneous
of degree $0$, such that $q-pg$ vanishes of infinite order at $\gamma$.
Furthermore,
\begin{equation}\label{eq:app27}
\partial_x^\alpha\partial_\xi^\beta g(\gamma)
=\lim_{j\to\infty} \partial_x^\alpha\partial_\xi^\beta g_j(\gamma_j)
\end{equation}
for all $(\alpha,\beta)\in\mathbb{N}^n\times\mathbb{N}^n$.
\end{prop}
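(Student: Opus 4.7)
The strategy is to apply Lemma \ref{appthm19} on $T^\ast(\R^n)\smallsetminus 0$ viewed as a smooth $2n$-manifold in order to obtain a (not a priori homogeneous) factorization, and then to upgrade it to a homogeneous one using the fact that the prescribed Taylor data at $\gamma$ are themselves compatible with degree-$0$ homogeneity.

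First I would invoke Lemma \ref{appthm19} with $X=T^\ast(\R^n)\smallsetminus 0$ and local coordinates $(x,\xi)$. Its hypotheses are precisely those of the present proposition, so it yields an $N\times N$ system $\tilde g\in C^\infty(T^\ast(\R^n)\smallsetminus 0)$ with $q-p\tilde g$ vanishing of infinite order at $\gamma$ and
\[
\partial_x^\alpha\partial_\xi^\beta\tilde g(\gamma)=\lim_{j\to\infty}\partial_x^\alpha\partial_\xi^\beta g_j(\gamma_j)=:a^{(\alpha,\beta)}
\]
for every $(\alpha,\beta)\in\N^n\times\N^n$.

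The system $\tilde g$ need not be homogeneous, but its Taylor data at $\gamma$ are. Since each $g_j$ is homogeneous of degree $0$, Euler's identity gives $\rho g_j\equiv 0$, so $\partial_x^\alpha\partial_\xi^\beta(\rho g_j)\equiv 0$ for every $(\alpha,\beta)$. Evaluating at $\gamma_j$ and passing to the limit yields
\[
|\beta|\,a^{(\alpha,\beta)}+\sum_k\xi_{0,k}\,a^{(\alpha,\beta+\varepsilon_k)}=0,\qquad\gamma=(x_0,\xi_0),
\]
where $\varepsilon_k$ is the $k$th standard basis vector. This is the statement that $\rho\tilde g$ vanishes of infinite order at $\gamma$, and an immediate induction extends this to every iterate $\rho^\ell\tilde g$, $\ell\ge 1$.

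Fixing a Euclidean norm on fibers, I would then define
\[
g(x,\xi):=\tilde g\bigl(x,\,|\xi_0|\xi/|\xi|\bigr),
\]
which is smooth on $T^\ast(\R^n)\smallsetminus 0$ and homogeneous of degree $0$ in $\xi$. The crucial point is that $\tilde g$ and $g$ share their full Taylor expansion at $\gamma$: writing $g-\tilde g$ via the chain rule, every nonvanishing contribution to $\partial_x^\alpha\partial_\xi^\beta(g-\tilde g)(\gamma)$ is a linear combination of values $\partial_x^\alpha\partial_\xi^{\beta'}(\rho^\ell\tilde g)(\gamma)$ with $\ell\ge 1$, all of which are zero by the previous paragraph. (Equivalently, one can construct $g$ directly from Borel's theorem on $S^\ast(\R^n)$ with Taylor data derived from the $a^{(\alpha,\beta)}$, the compatibility relation above being exactly what is needed for such data to prescribe a smooth function on the cosphere bundle.) In either case $\tilde g-g$ vanishes of infinite order at $\gamma$, so \eqref{eq:app27} is inherited from the corresponding property of $\tilde g$, and writing
\[
q-pg=(q-p\tilde g)+p(\tilde g-g)
\]
shows that $q-pg$ vanishes of infinite order at $\gamma$. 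The main technical point, where I would expect to spend the most effort, is the chain-rule verification that the radial flattening $g$ inherits the full Taylor expansion of $\tilde g$ at $\gamma$; this is where the compatibility of the prescribed data with homogeneity is essential.
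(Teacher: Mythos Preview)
Your argument is correct, but it takes a different route from the paper's. The paper first passes to the cosphere bundle: it scales so that $\gamma,\gamma_j\in S^\ast(\R^n)$, observes that the principal-type direction $\partial_\nu$ can be taken tangent to $S^\ast(\R^n)$ (since by Euler's relation the radial component of $\partial_\nu p(\gamma)$ vanishes when $p(\gamma)=0$), and then applies Lemma~\ref{appthm19} directly on the $(2n-1)$-dimensional manifold $S^\ast(\R^n)$ to produce $g_s$; the homogeneous $g$ is just the pullback $\pi^\ast g_s$. The radial derivatives of $q-pg$ are then recovered for free from homogeneity.

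You instead apply Lemma~\ref{appthm19} on the full $2n$-dimensional $T^\ast(\R^n)\smallsetminus 0$ and homogenize afterward. This works, and your parenthetical alternative (build $g$ on $S^\ast(\R^n)$ via Borel from the restricted data) is in fact essentially the paper's construction. The trade-off is that the paper must check $\nu$ can be taken non-radial, while you must check the Euler compatibility and the chain-rule step. For the latter, your phrasing is a little loose; the cleanest way to make it precise is to pass to fiber-polar coordinates $s=\log(|\xi|/|\xi_0|)$, $\omega=\xi/|\xi|$, in which $\rho=\partial_s$ and $g$ is simply $\tilde g$ frozen at $s=0$. Then $H=\tilde g-g$ satisfies $H|_{s=0}=0$ and $\partial_s H=\rho\tilde g$, and since $\rho\tilde g$ vanishes of infinite order at $\gamma$ (as you verified), all mixed derivatives of $H$ at $\gamma$ vanish. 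This replaces the somewhat vague ``linear combination of $\rho^\ell\tilde g$'' claim and in fact shows that only $\ell=1$ is needed.
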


\begin{proof}
Let $\pi : T^\ast (\mathbb{R}^n)\smallsetminus 0 \rightarrow S^\ast (\mathbb{R}^n)$ be the projection,
and identify $S^\ast(\R^n)$ with $\R^n\times S^{n-1}$.
We have $\kernel p(\gamma)=\C^N
=\cokernel p(\gamma)$, so
$\partial_\nu p(\gamma)$ is invertible.
It follows that
$\partial_{\lambda\nu} p(\gamma)=\lambda\partial_{\nu} p(\gamma)$ is invertible for all $\lambda>0$.
With $\gamma=(x_0,\xi_0)$ and $\nu=(\nu_1,\nu_2)$ this implies that
$\partial_{\mu} p(\pi (\gamma))$ is invertible for $\mu=(\nu_1,\nu_2/|\xi_0|)$ since
\begin{align*}
\partial_\nu p(\gamma)&=\frac{d}{dt}p(\gamma+t\nu)|_{t=0}\\
&=\frac{d}{dt} ( |\xi_0|^m p ( x_0+t\nu_1,(\xi_0+t\nu_2)/|\xi_0| ) )|_{t=0}
=|\xi_0|^m\partial_\mu p(\pi (\gamma)).
\end{align*}
By using the homogeneity of $p$, $q$, $q_j$ and $g_j$ we may then
assume that $\gamma$ and $\gamma_j$ belong to $S^\ast (\mathbb{R}^n)$
for $j\ge 1$ to begin with, and that $\partial_{\nu} p(\gamma)$ is
invertible with $\nu$ replaced by $\mu$.

We may also assume that $\nu$ is a tangent
vector $\nu\in T_\gamma S^\ast (\mathbb{R}^n)$.
Indeed, the radial vector field $\rho$
applied $k$ times to $a\in C^\infty (T^\ast(\mathbb{R}^n)\smallsetminus 0)$
equals $\ell^ka$ if $a$ is homogeneous of degree $\ell$. 
For any point $w\in S^\ast(\mathbb{R}^n)$
with $w=(w_x,w_\xi)$ in local coordinates on $T^\ast(\mathbb{R}^n)$
it is easy to see that
\[
T_w S^\ast (\mathbb{R}^n) =\{ (u,v)\in \mathbb{R}^n\times \mathbb{R}^n :
\langle w_\xi , v \rangle = 0\}.
\]
Therefore a basis for $T_w S^\ast (\mathbb{R}^n)$ together with
the radial vector field $\rho(w)$ at $w$ constitutes
a basis for $T_w T^\ast (\mathbb{R}^n)$. By these considerations it follows that
$\partial_\nu$ cannot be a multiple of the radial vector field
at $\gamma$ since $\partial_\nu p(\gamma)$ is invertible while
$p(\gamma)=0$. Hence, $\partial_\nu=c\rho(\gamma)+\partial_{\tilde{\nu}}$
for some $c\in\R$ and $0\ne \tilde{\nu}\in T_\gamma S^\ast(\R^n)$.
Again, since $p(\gamma)=0$ we have $\partial_{\tilde \nu}p(\gamma)=\partial_\nu p(\gamma)$
by Euler's homogeneity relation, which proves the claim.
Note that these arguments also show that if
we can find a homogeneous matrix valued function $g$ such
that $q-pg$ vanishes of infinite order in the directions
$T_\gamma S^\ast(\mathbb{R}^n)$, then $q-pg$ vanishes of infinite order
at $\gamma$, for the derivatives involving the radial direction are
determined by lower order derivatives in the directions
$T_\gamma S^\ast(\mathbb{R}^n)$.

Write $p(x,\xi)=|\xi|^m\pi^\ast p_s(x,\xi)$, where $p_s=p\circ\pi$ is the
restriction of $p$ to $S^\ast(\R^n)$. Doing the same for $q$, $q_j$ and $g_j$
we find by the hypotheses of the proposition
together with an application of Lemma \ref{appthm19},
that there exists a matrix valued function $g_s\in C^\infty (S^\ast (\mathbb{R}^n),\mathcal L_N)$,
such that $q_s-p_sg_s$ vanishes of infinite order at $\gamma$ and
\eqref{eq:app20} holds for $g_s$, interpreted in the appropriate
sense for a local frame for $S^\ast(\R^n)$.
The function $g(x,\xi)=\pi^\ast g_s(x,\xi)$
is homogeneous of degree $0$ and
coincides with $g_s$ on $S^\ast (\mathbb{R}^n)$. In particular,
all derivatives of $g$ and $g_s$ in the directions $T_\gamma S^\ast(\mathbb{R}^n)$
are equal at $\gamma$.
Thus, by the arguments above
we conclude that $q-pg$ vanishes of infinite order at $\gamma$.
Since $g$ and $g_j$ are homogeneous of degree $0$, the same
arguments also imply that \eqref{eq:app27} holds for $g$,
which completes the proof.
\end{proof}

Using Lemma \ref{appthm19elliptic}
in place of Lemma \ref{appthm19}
we obtain the following result for elliptic systems corresponding to Proposition \ref{appthm26}.

\begin{prop}\label{appthm26elliptic}
For $j\ge 1$ let $p,q_j,g_j\in C^\infty(T^\ast (\mathbb{R}^n)\smallsetminus 0)$
be $N\times N$ systems, where
$p$ and $q_j$ are homogeneous of degree $m$ and $g_j$ is homogeneous of degree $0$.
Let $\{\gamma_j\}_{j=1}^\infty$ be a sequence in $T^\ast (\mathbb{R}^n)\smallsetminus 0$
such that $\gamma_j\to\gamma$ as $j\to\infty$, and assume that
$|p(\gamma)|$ and $|p(\gamma_j)|$ are non-vanishing for all $j$,
where $|p|$ is the determinant of $p$.
If there exists an $N\times N$ system $q\in C^\infty(T^\ast(\mathbb{R})^n\smallsetminus 0)$,
homogeneous of degree $m$, such that
\[
\partial_x^\alpha\partial_\xi^\beta q(\gamma)
=\lim_{j\to\infty} \partial_x^\alpha\partial_\xi^\beta q_j(\gamma_j)
\]
for all $(\alpha,\beta)\in\mathbb{N}^n\times\mathbb{N}^n$,
and if $q_j-pg_j$ vanishes of infinite order at $\gamma_j$ for all $j$, then there
exists an $N\times N$ system $g\in C^\infty(T^\ast(\mathbb{R}^n)\smallsetminus 0)$, homogeneous
of degree $0$, such that $q-pg$ vanishes of infinite order at $\gamma$.
Furthermore,
\begin{equation*}
\partial_x^\alpha\partial_\xi^\beta g(\gamma)
=\lim_{j\to\infty} \partial_x^\alpha\partial_\xi^\beta g_j(\gamma_j)
\end{equation*}
for all $(\alpha,\beta)\in\mathbb{N}^n\times\mathbb{N}^n$.
\end{prop}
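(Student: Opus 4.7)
The plan is to imitate the proof of Proposition \ref{appthm26} almost verbatim, with Lemma \ref{appthm19elliptic} taking the role of Lemma \ref{appthm19}. The elliptic case is actually simpler than the principal-type case: there is no distinguished tangent vector $\partial_\nu$ to isolate, because $p$ is invertible at $\gamma$ and in a neighborhood, so every differentiation direction is acceptable. In particular I do not need the preliminary reduction showing that one may take $\partial_\nu$ tangent to the cosphere bundle.

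First I would reduce to the cosphere bundle using homogeneity. Let $\pi : T^\ast(\R^n)\smallsetminus 0\to S^\ast(\R^n)$ be the projection, and write $p(x,\xi)=|\xi|^m\pi^\ast p_s(x,\xi)$, and analogously $q=|\xi|^m\pi^\ast q_s$, $q_j=|\xi|^m\pi^\ast q_{j,s}$, $g_j=\pi^\ast g_{j,s}$, where $p_s, q_s, q_{j,s}, g_{j,s}$ are the restrictions to $S^\ast(\R^n)$. Since $|p(\gamma)|\ne 0$ and $|p(\gamma_j)|\ne 0$, the same holds for $p_s$ at $\pi(\gamma)$ and at $\pi(\gamma_j)$. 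The hypothesis $\partial_x^\alpha\partial_\xi^\beta q_j(\gamma_j)\to\partial_x^\alpha\partial_\xi^\beta q(\gamma)$ in local coordinates on $T^\ast(\R^n)$ translates, via homogeneity, into the convergence of the derivatives of $q_{j,s}$ at $\pi(\gamma_j)$ to those of $q_s$ at $\pi(\gamma)$ with respect to any local frame on $S^\ast(\R^n)$. Apply Lemma \ref{appthm19elliptic} with $X=S^\ast(\R^n)$ (in a local chart near $\pi(\gamma)$) to produce a matrix valued function $g_s\in C^\infty(S^\ast(\R^n),\mathcal L_N)$ such that $q_s-p_sg_s$ vanishes of infinite order at $\pi(\gamma)$ and $\partial^\alpha g_{j,s}(\pi(\gamma_j))\to \partial^\alpha g_s(\pi(\gamma))$ for every multi-index.

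Then set $g(x,\xi)=\pi^\ast g_s(x,\xi)$, which is homogeneous of degree $0$ and smooth on $T^\ast(\R^n)\smallsetminus 0$. It remains to verify that $q-pg$ vanishes of infinite order at $\gamma$ in the full cotangent space and that \eqref{eq:app27} holds for every $(\alpha,\beta)\in\N^n\times\N^n$. For derivatives in directions tangent to $S^\ast(\R^n)$ at $\gamma$, both statements follow directly from the corresponding statements for $g_s$ and $q_s-p_sg_s$ on $S^\ast(\R^n)$. For mixed derivatives that also involve the radial direction, I would use Euler's homogeneity relation $\rho f=\ell f$ iteratively: each radial differentiation of a function homogeneous of degree $\ell$ just multiplies by the scalar $\ell$, so any derivative of $q-pg$ (which is a difference of functions homogeneous of degree $m$) can be rewritten as a sum of purely tangential derivatives with homogeneous-degree prefactors, and likewise for $g$ which is homogeneous of degree $0$. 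Thus vanishing of infinite order on $T(S^\ast(\R^n))$-directions, combined with homogeneity, forces vanishing of infinite order at $\gamma$, and the limit formula extends to all mixed derivatives. The only delicate point, and the one that requires care, is this last bookkeeping with the radial direction, but homogeneity makes it entirely mechanical; this is essentially the same argument as in the closing paragraph of the proof of Proposition \ref{appthm26}.
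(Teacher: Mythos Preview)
Your proposal is correct and follows essentially the same approach as the paper: project to the cosphere bundle via homogeneity, apply Lemma \ref{appthm19elliptic} there to obtain $g_s$, and pull back $g=\pi^\ast g_s$, with the radial direction handled by Euler's relation exactly as in the proof of Proposition \ref{appthm26}. Your observation that the elliptic case is simpler because no distinguished tangent vector $\partial_\nu$ needs to be pushed into $T_\gamma S^\ast(\R^n)$ is also in line with the paper, which simply omits that step.
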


\begin{proof}
Let $\pi:T^\ast(\R^n)\to S^\ast(\R^n)$ be the projection,
and identify $S^\ast(\R^n)$ with $\R^n\times S^{n-1}$. Arguing as
in the proof of Proposition \ref{appthm26}, it follows
by homogeneity that all assumptions continue to hold after projecting
onto the cosphere bundle. An application of Lemma \ref{appthm19elliptic}
yields the existence of a matrix valued function $g_s\in C^\infty(S^\ast(\R^n),\mathcal L_N)$
for which the pullback $g=\pi^\ast g_s$ has the required properties. This completes the proof.
\end{proof}

\end{document}